\numberwithin{equation}{section}
\definecolor{myred}{rgb}{0.75,0,0}
\definecolor{mygreen}{rgb}{0,0.5,0}
\definecolor{myblue}{rgb}{0,0,0.65}
    \def\AM{{\mathbb{A}}}
    \def\CM{{\mathbb{C}}}
    \def\DM{{\mathbb{D}}}
    \def\FM{{\mathbb{F}}}
  \def\gg{{\mathfrak g}}  
  \def\hg{{\mathfrak h}}  \def\HM{{\mathbb{H}}}
  \def\ig{{\mathfrak i}}  
    \def\KM{{\mathbb{K}}}
  \def\lg{{\mathfrak l}}  
    \def\NM{{\mathbb{N}}}
    \def\OM{{\mathbb{O}}}
  \def\pg{{\mathfrak p}}  \def\PM{{\mathbb{P}}}
    \def\QM{{\mathbb{Q}}}
    \def\RM{{\mathbb{R}}}
  \def\sg{{\mathfrak s}}  
  \def\ug{{\mathfrak u}}
    \def\ZM{{\mathbb{Z}}}
    \def\BC{{\mathcal{B}}}
    \def\CC{{\mathcal{C}}}
    \def\EC{{\mathcal{E}}}
    \def\FC{{\mathcal{F}}}
    \def\GC{{\mathcal{G}}}
    \def\HC{{\mathcal{H}}}
    \def\IC{{\mathcal{I}}}
    \def\JC{{\mathcal{J}}}
    \def\KC{{\mathcal{K}}}
    \def\LC{{\mathcal{L}}}
    \def\NC{{\mathcal{N}}}
    \def\OC{{\mathcal{O}}}
    \def\PC{{\mathcal{P}}}
    \def\TC{{\mathcal{T}}}
\def\XB{{\mathbf X}}    
\def\YB{{\mathbf Y}}
\def\a{\alpha}
\def\b{\beta}
\def\l{\lambda}
\def\L{\Lambda}
\newcommand{\nc}{\newcommand} \newcommand{\renc}{\renewcommand}
\newcommand{\rdots}{\mathinner{ \mkern1mu\raise1pt\hbox{.}
    \mkern2mu\raise4pt\hbox{.}
    \mkern2mu\raise7pt\vbox{\kern7pt\hbox{.}}\mkern1mu}}
\def\mini{{\mathrm{min}}}
\def\top{{\mathrm{top}}}
\def\re{{\mathrm{re}}}
\def\reg{{\mathrm{reg}}}
\def\red{{\mathrm{red}}}
\DeclareMathOperator{\Lie}{Lie}
\newcommand{\elem}[1]{\stackrel{#1}{\longto}}
\def\wt{\widetilde}
\def\ov{\overline}
\def\un{\underline}
\def\p{{}^p}
\def\to{\rightarrow}
\def\longto{\longrightarrow}
\def\onto{\twoheadrightarrow}
\nc{\triright}{\stackrel{[1]}{\to}}
\nc{\longtriright}{\stackrel{[1]}{\longto}}
\nc{\Br}{\mathcal{B}}
\nc{\HotRR}{{}_R\mathcal{K}_R}
\nc{\HotR}{\mathcal{K}_R}
\nc{\excise}[1]{}
\nc{\defect}{\text{df}}
\nc{\h}[1]{\underline{H}_{#1}}
\nc{\Ga}{\mathbb{G}_a} 
\nc{\Gm}{\mathbb{G}_m} 
\nc{\Perv}{{\mathbf{P}}}
\nc{\IH}{{\mathrm{IH}}}
\nc{\ic}{\mathbf{IC}}
\nc{\gl}{{\mathfrak{gl}}}
\renc{\sl}{{\mathfrak{sl}}}
\renc{\sp}{{\mathfrak{sp}}}
\nc{\HBM}{H^{BM}}
\DeclareMathOperator{\im}{{\mathrm{Im}}}
 \DeclareMathOperator{\Hom}{Hom}
\DeclareMathOperator{\supp}{supp} 
\DeclareMathOperator{\End}{End} 
\DeclareMathOperator{\Loc}{Loc_{f}}
\DeclareMathOperator{\id}{Id}
\DeclareMathOperator{\Bs}{BS}
\DeclareMathOperator{\rad}{rad}
\DeclareMathOperator{\rank}{rank}
\newtheorem{thm}{Theorem}[section]
\newtheorem{lem}[thm]{Lemma}
\newtheorem{prop}[thm]{Proposition}
\newtheorem{cor}[thm]{Corollary}
\theoremstyle{definition}
\newtheorem{defi}[thm]{Definition}
\newtheorem{ex}[thm]{Example}
\theoremstyle{remark}
\newtheorem{remark}[thm]{Remark}
\newtheorem{question}[thm]{Question}
\newcommand{\into}{\hookrightarrow}
\def\Iff{\Longleftrightarrow}
\def\uk{\underline{k}}
\def\pt{{\mathrm{pt}}}
\def\Gr{{\EuScript Gr}}
\def\Mod{\mathrm{Mod}}
\def\Vect{\mathrm{Vect}}
\DeclareMathOperator{\even}{even}
\DeclareMathOperator{\odd}{odd}
\begin{document}

\title{Parity sheaves}

\author{Daniel Juteau} \address{ LMNO, Universit\'e de Caen
  Basse-Normandie, CNRS, BP 5186, 14032 Caen, France}
\email{daniel.juteau@unicaen.fr}
\urladdr{http://www.math.unicaen.fr/~juteau}

\author{Carl Mautner} 
\address{Max-Planck-Institut f\"ur Mathematik, Vivatsgasse 7, 53111 Bonn, Germany}
\email{cmautner@mpim-bonn.mpg.de}
\urladdr{http://people.mpim-bonn.mpg.de/cmautner/}

\author{Geordie Williamson} \address{Max-Planck-Institut f\"ur Mathematik, Vivatsgasse 7, 53111 Bonn, Germany}
\email{geordie@mpim-bonn.mpg.de}
\urladdr{http://people.mpim-bonn.mpg.de/geordie/}

\thanks{D.~J. was supported by ANR Grant No.~ANR-09-JCJC-0102-01 and C. M. by an NSF postdoctoral fellowship.}

\maketitle

\section{Introduction}

\subsection{Overview}

In view of applications in geometric representation theory in positive
characteristic, we introduce parity sheaves, a class of constructible
complexes of sheaves on stratified varieties whose strata satisfy a
cohomological parity vanishing condition. We show the existence and
uniqueness of parity sheaves on several spaces arising in
representation theory, including generalised flag varieties, nilpotent
cones (at least for $GL_n$) and toric varieties.

With sheaf coefficients in a field of characteristic zero, parity sheaves correspond to classical objects in geometric representation theory. When the coefficients are of positive characteristic, parity sheaves are important new objects. We show that parity sheaves, unlike intersection cohomology complexes, satisfy a form of the Decomposition Theorem, and explain the role played by intersection forms in determining the decomposition of their direct images. On flag varieties parity sheaves allow us to retrieve in a uniform way the Beilinson-Bezrukavnikov-Mirkovi{\'c} tilting sheaves and the special sheaves of Soergel, used by Fiebig in his proof of Lusztig's conjecture.

\subsection{Outline}
In Section~\ref{sec-dafp} we define parity sheaves and develop some of their basic properties.  Our notation and assumptions appear in~\ref{subsec-naa}.  The definition of parity sheaves appears as Definition~\ref{def-parsheaf} and depends on the preceding uniqueness result (Theorem~\ref{indecs}).  Section~\ref{subsec-exist} begins to explore the question of existence and gives a criterion for existence.  In Section~\ref{subsec-even}, we introduce the notion of an even map (Definition~\ref{def-even}) and show that the push forward functor along proper, even maps preserves the class of parity complexes (Proposition~\ref{prop-evenpush}).  This is our key tool for producing examples and serves as a weak analogue of the Decomposition Theorem.  Section~\ref{subsec:modular} is concerned with the behaviour of parity sheaves under modular reduction.  Proposition~\ref{prop:almostall} shows that when an $\ic$-sheaf with $\QM$-coefficients is parity, the corresponding modular $\ic$-sheaf is parity for all but finitely many characteristics.  Sections~\ref{sec-torsion} and~\ref{sec:ind} review respectively the notions of torsion primes and ind-varieties.

Section~\ref{sec-decomp} extends an observation of de Cataldo and Migliorini \cite{dCM, dCM2} from their recent Hodge theoretic proof of the Decomposition Theorem.
In their work a crucial role is played by the case of semi-small resolutions, and 
certain intersection forms attached to the strata of the target. Indeed,
they show that for a semi-small morphism 
the direct image of the intersection cohomology sheaf splits as a direct sum of
intersection cohomology complexes if and only if these forms are
non-degenerate.

In Section~\ref{subsec-intproduct}, we recall the definition of these intersection forms.  In Section~\ref{subsec-DTSS}, we extend the observation of de Cataldo and Migliorini and prove (Theorem~\ref{thm-intformsgeneral}) that the non-degeneracy of the modular reduction of these intersection forms (which are defined over the integers) determine exactly when the decomposition theorem fails in positive characteristic for a semi-small resolution.  Section~\ref{subsec-decompparity} addresses the case of a proper and even (but not necessarily semi-small) morphism from a smooth source.  Theorem~\ref{thm-paritydecomp} shows that the multiplicities of parity sheaves which occur in the direct image are given in terms of the ranks of these forms.  These theorems allow one to reformulate questions in representation theory in terms of such intersection forms.

The remaining sections explore three classes of examples: Kac-Moody
flag varieties (\ref{subsec-KacMoody}), toric varieties
(\ref{subsec-toric}) and nilpotent cones (\ref{subsec-Nilp}).

\subsection{Related work}
 The usefulness of some form of parity vanishing in
  representation theory and intersection/equivariant cohomology has
  been noticed by many authors (e.g. \cite{KL2}, \cite{SpIH},
  \cite{CPS}, \cite{GKM} and
  \cite{BJ}). In the following we comment briefly on
  ideas that are particularly closely related to the current work:

\subsubsection{Soergel's category $\KC$} The idea of considering another class of
objects as ``replacements'' for intersection cohomology complexes when
using positive characteristic coefficients is due to Soergel in
\cite{Soe}. He considers the full additive subcategory $\KC$ of the derived
category of sheaves of $k$-vector spaces on the flag variety which
occur as direct summands of direct images of the constant sheaf on
Bott-Samelson resolutions. Furthermore, he shows 
(using arguments from representation theory) that if the
characteristic of $k$ is larger than the Coxeter number, then 
the indecomposable objects in $\KC$ are parametrised by the Schubert
cells. In fact, the indecomposable objects in Soergel's category
$\KC$ are parity sheaves, and our arguments provide a
geometric way of understanding and expanding his result.

\subsubsection{Tilting perverse sheaves}
Since their introduction by Ringel \cite{RingelTilting}, an important role in
representation theory is played by tilting objects in highest weight
categories. There are several important examples of categories of perverse
sheaves which are highest weight, and it is desirable to have a local
(i.e. in terms of stalks and costalks) characterization of the tilting
sheaves. In \cite{TiltExer} Beilinson, Bez\-ru\-kav\-nikov and Mirkovi{\'c}
give such a description for the Schubert constructible perverse
sheaves on the flag variety. It is immediate from their description that the
tilting perverse sheaves can be also be characterised as parity sheaves
(for the ``dimension pariversity'', see Section \ref{subsec-defi}).

Another important example of a highest weight category of perverse
sheaves is the Satake category of $G[[t]]$-constructible perverse
sheaves on the affine Grassmannian. In \cite{JMW3} the authors show that
the parity sheaves on the affine Grassmannian correspond to the
tilting sheaves, under certain explicit bounds on the characteristic
of the coefficients. Thus, in two important and quite different examples---the finite flag variety
(or more generally any stratified variety satisfying the conditions of
\cite{TiltExer} and our parity conditions) and the affine
Grassmannian---we see that the indecomposable tilting sheaves are parity sheaves. Thus one is led to suspect a 
relation between parity sheaves and tilting sheaves on any
variety satisfying our parity conditions for which the corresponding
category of perverse sheaves is highest weight. For example one may
show that, in the above situation, if the parity sheaves for the
dimension pariversity are perverse then they are tilting sheaves.
In \cite{AM}, Achar and the second author start to explore this phenomenon in the case of nilpotent cones.

\subsubsection{Combinatorial models for intersection cohomology}
There exist combinatorial algorithms (due to Bernstein and Lunts \cite{BL} and 
Barthel, Brasselet, Fieseler and Kaup \cite{BBFK}) for calculating
the rational equivariant intersection cohomology of a toric variety using
commutative algebra. Similarly, Braden and MacPherson \cite{BrM}
gave an analogous combinatorial algorithm for Schubert varieties. In both cases
the calculation of intersection cohomology with modular coefficients
is significantly more difficult, and no algorithm is known. In \cite{FW}, Fiebig and the third author
show that, when performed with coefficients of positive
characteristics, the Braden-MacPherson algorithm computes the stalks
of the parity sheaves. It is likely that an analogous
result is true for toric varieties.

\subsubsection{The $p$-canonical basis}

Parity sheaves on generalised flag varieties with coefficients in a
field of characteristic $p \geq 0$ may be used to define a
``$p$-canonical basis'' for the Hecke algebra which enjoys remarkable
positivity properties (when $p = 0$ one recovers the Kazhdan-Lusztig
basis). In low rank examples, there are sufficiently many constraints
to force the $p$-canonical and Kazhdan-Lusztig bases to coincide
for all $p$ and almost all elements of the Weyl group
\cite{WillLowRank}. This indicates where to look for non-trivial
torsion, which does indeed occur. Braden had previously found
$2$-torsion in some Schubert varieties of types $A_7$ and $D_4$ (see
the appendix of \cite{WillLowRank}). More recently, Polo has found
$n$-torsion in a Schubert variety of type $A_{4n-1}$. The examples
in $A_7$ led to the discovery of a relation between non trivial parity
sheaves and the reducibility of characteristic varieties \cite{VW}.

There is a parallel story using Lusztig complexes on moduli spaces of
quiver representations, where one recovers the $p$-canonical basis for
the negative part of the quantised enveloping algebra \cite{Groj}. The
relationship with parity sheaves for linear quivers has been explained
by Maksimau \cite{Maksimau}. These results may be used to rephrase the
James conjecture in terms of parity sheaves.

\subsubsection{Weights and parity sheaves} Replacing our complex
variety $X$ by a variety $X_o$ defined over a finite field $\FM_q$,
one can consider Deligne's theory of weights in the derived category
$D^b_c(X_o, \QM_{\ell})$ of $\QM_{\ell}$-sheaves (see \cite{BBD} for
details and notation). 

In all examples considered in this paper, one can proceed naively, and say
that $\FC_o \in D^b_c(X_o, \ZM_{\ell})$ (resp. $D^b_c(X_o, \FM_{\ell})$) is
pure of weight 0 if $\HC^i(\FC)$ and $\HC^i(\DM \FC)$ vanish for odd
$i$ and, for all $x \in X_o(\FM_{q^n})$ the Frobenius $F_{q^n}^*$ acts on
the stalks of $\HC^{2i}(\FC)$ and $\HC^{2i}(\DM \FC)$ as
multiplication by $q^{ni}$ (the image of $q^{ni}$ in
$\FM_{\ell}$ respectively). With this definition one can show that, in all examples 
considered in this paper, there exist analogues of parity sheaves which
are pure of weight 0. Note
that the modular analogue of Gabber's theorem is not true: if $\FC_o$ in 
$D^b_c(X_o, \ZM_{\ell})$ or $D^b_c(X_o, \FM_{\ell})$ is pure of 
weight 0, then $\FC$ is not necessarily semi-simple.

Nevertheless, such considerations have been used by Riche, Soergel and the
third author to deduce that the dg-algebra of extensions of the direct
sum of all parity sheaves on the flag variety is formal. From this
they deduce a modular form of Koszul duality \cite{RSW}.

\subsection{Acknowledgements}
We would like to thank Alan Stapledon for help on the section about
toric varieties. We would also like to thank David Ben-Zvi, Matthew
Dyer, Peter Fiebig, Sebastian Herpel, Joel Kamnitzer, Frank L{\"u}beck, David Nadler,
Rapha{\"e}l Rouquier, Olaf Schn{\"u}rer, Eric Sommers, Tonny Springer,
Catharina Stroppel, Ben Webster and Xinwen Zhu for useful discussions
and comments. Finally, we thank Wilberd van der Kallen for pointing
out some mistakes in a previous version and two anonymous referees for a thorough reading.

We also thank the Centre International de Rencontres Math\'ematiques where parts of this paper were written.
The first and third authors would like to thank the Mathematical
Sciences Research Institute, Berkeley and the Isaac Newton Institute,
Cambridge for providing excellent research environments in which to
pursue this project. 
The second author would also like to thank his advisor David
Ben-Zvi, the geometry group at UT Austin, and David Saltman for
travel support during his time as a graduate student.

\section{Definition and first properties}
\label{sec-dafp}

\subsection{Notation and assumptions}
\label{subsec-naa}

Let $\OM$ denote a complete discrete valuation ring of characteristic
zero (e.g., a finite extension of $\ZM_p$), $\KM$ its field of
fractions (e.g., a finite extension of $\QM_p$), and $\FM$ its residue
field (e.g., a finite field $\FM_q$).
Unless stated otherwise, $k$ denotes a complete local principal ideal domain, which may be for
example $\KM$, $\OM$ or $\FM$, and all sheaves and cohomology groups
are to be understood with coefficients in $k$.

In what follows all varieties will be considered over $\CM$ and
equipped with the classical topology.
Throughout, $X$ denotes either a variety or a
$G$-variety for some connected linear algebraic group $G$. In
Sections~\ref{sec-dafp} and~\ref{sec-decomp} we deal with these two
situations simultaneously, bracketing the features which only apply in
the equivariant situation. In the examples, we will specify the set-up
in which we work.

We fix an algebraic stratification (in the sense of 
\cite[Definition 3.2.23]{CG})
\[ X = \bigsqcup_{\lambda \in \Lambda} X_{\lambda} \]
of $X$ into smooth connected locally closed ($G$-stable)
subsets. For each $\lambda \in \Lambda$ we denote by $i_{\lambda} :
X_{\lambda} \to X$ the inclusion and by $d_{\lambda}$ the complex
dimension of $X_{\lambda}$.

We denote by $D(X)$, or $D(X;k)$ if we wish to emphasise the
coefficients, the bounded (equivariant) constructible derived category
of $k$-sheaves on $X$ with respect to the given stratification
(see \cite{BLu} for the definition and basic properties of the equivariant
derived category). The category $D(X)$ is triangulated with shift
functor $[1]$. We call objects of $D(X)$ complexes.
For all $\lambda \in \Lambda$, let $\uk_{\lambda}$ denote 
the (equivariant) constant sheaf on $X_{\lambda}$. Given $\FC$ and
$\GC$ in $D(X)$ we set $\Hom(\FC, \GC) := \Hom_{D(X)}(\FC, \GC)$ and
$\Hom^n(\FC, \GC) := \Hom(\FC, \GC[n])$. We can form the graded $k$-module
$\Hom^{\bullet}(\FC, \GC) := \oplus_{n\in \ZM} \Hom^n(\FC, \GC)$.

Recall that an additive category is Krull-Remak-Schmidt if every
object is isomorphic to a finite direct sum of objects, each of which
has local endomorphism ring. In a Krull-Remak-Schmidt category all
idempotents split and any object admits a unique decomposition into
indecomposable objects. Moreover, an object is indecomposable if and
only if its endomorphism ring is local. By our assumptions on $k$,
$D(X)$ is a Krull-Remak-Schmidt category (see \cite{LeChen}).

\begin{remark} 
The category $D(X)$ is Krull-Remak-Schmidt as soon as the ring of coefficients $k$
is Noetherian and complete local. The Krull-Remak-Schmidt property of $D(X)$ is fundamental to
all arguments below. Above we make the stronger assumption that $k$
is a complete local principal ideal domain
(equivalently a field or complete discrete valuation ring). We use
this stronger assumption in Sections \ref{subsec-exist} and
\ref{subsec:modular}. The results of Sections \ref{subsec-defi},
\ref{subsec-even} and \ref{sec-examples} remain valid for coefficients
in any Noetherian complete local ring $k$.
In Section \ref{sec-decomp} we assume that $k$ is a field.
\end{remark}

For each $\lambda$, denote by $\Loc(X_{\lambda}, k)$ or $\Loc(X_{\lambda})$ the category of
(equivariant) local systems of free finite rank $k$-modules
on $X_{\lambda}$. We make the following
assumptions on our variety $X$, which are in force throughout the
paper except in Section \ref{subsec-DTSS} and \ref{subsec:min}. For each $\lambda \in \Lambda$ and all
$\LC,\LC'\in \Loc(X_{\lambda})$
we assume:
\begin{gather}
\label{assump-parity}
\text{ $\Hom^n(\LC, \LC^{\prime}) = 0$ for $n$ odd }
\end{gather}
and
\begin{gather}
\label{assump-parity2}
\text{ $\Hom^n(\LC, \LC')$ is a free $k$-module for all $n$.}
\end{gather}

\begin{remark} ~ \label{rem-par}
\begin{enumerate}
\item When $k$ is a field, all finite dimensional $k$-modules are free, so the
second assumption can be ignored.
\item Given two local systems $\LC, \LC^{\prime} \in \Loc(X_{\lambda})$ we
  have isomorphisms:
\[ \Hom^{\bullet}(\LC, \LC') \cong \Hom^{\bullet}(\uk_{\lambda},
\LC^{\vee} \otimes \LC') \cong
\HM^{\bullet}(\LC^{\vee} \otimes \LC'). \]
Hence \eqref{assump-parity} and \eqref{assump-parity2} are equivalent
to requiring that $\HM^{\bullet}(\LC)$ is a free $k$-module and
vanishes in odd degree, for all $\LC \in \Loc(X_{\lambda})$.
\item The condition \eqref{assump-parity} implies that there are no
  extensions between objects of the category $\Loc(X_\l)$.  In
  particular, if $k$ is a field, then $\Loc(X_\l)$ is semi-simple.
\end{enumerate}
\end{remark}

Finally, for $\lambda \in \Lambda$ and $\LC \in \Loc(X_\lambda)$, we
denote by $\ic(\lambda,\LC)$, or simply $\ic(\lambda)$ if $\LC = \uk_\l$,
the intersection cohomology complex on $\ov X_\lambda$ with
coefficients in $\LC$, shifted by $d_\lambda$ so that it is perverse,
and extended by zero on $X \setminus \ov X_\lambda$.
We always use the middle perversity $p_{1/2}$, which is self-dual when $k$ is a
field.  When $k$ is a ring of integers, it is not stable by duality,
so there is a dual $\ic$ for the dual $t$-structure $p_{1/2}^+$
\cite[\S 3.3]{BBD}. In this paper we only need the standard $\ic$.

\subsection{Definition and uniqueness}
\label{subsec-defi}

In this section the notation and assumptions are as in Section \ref{subsec-naa}.

\begin{defi}
  A \emph{pariversity} is a function $\dagger : \L \to \ZM /
  2$.\footnote{We regard elements of $\ZM/2$ as cosets and denote by
    $\overline{\cdot}:\ZM \to \ZM/2$ the non-trivial homomorphism, that is,
    $\overline{0}=\{n\in \ZM \mid n \textrm{ is even}\}$  and
    $\overline{1}=\{n\in \ZM \mid n \textrm{ is odd}\}$.}
\end{defi}

We will mainly be interested in two special pariversities: the
constant function $\natural$ defined by $\natural(\l)=\overline{0}$ for all $\l$ and
the dimension function $\Diamond$ defined by $\Diamond(\l)=
\overline{d_\l}$. Notice that if the strata are all even dimensional then $\natural=\Diamond$.

\begin{defi} Fix a pariversity $\dagger$. 
In the following $? \in \{ *, ! \}$.
\begin{itemize}
\item A complex $\FC \in D(X)$ is {\bf $(\dagger,?)$-even } (resp. {\bf
    $(\dagger,?)$-odd}) if,
  for all $\lambda \in \Lambda$ and $n \in \ZM$, the cohomology sheaf
  $\HC^n(i_\lambda^? \FC)$ belongs to $\Loc(X_\lambda)$ and
  vanishes for $n \not\in \dagger(\l)$ (resp. $n \in \dagger(\l)$).
\item A complex $\FC$ is {\bf $(\dagger,?)$-parity}
  if it is either $(\dagger,?)$-even or $(\dagger,?)$-odd.
\item A complex $\FC$ is {\bf $\dagger$-even} (resp. {\bf $\dagger$-odd}) 
  if it is both $(\dagger,*)$- and $(\dagger,!)$-even (resp. odd).
\item A complex $\FC$ is {\bf $\dagger$-parity} if it splits as the
  direct sum of a $\dagger$-even complex and a $\dagger$-odd complex.
\end{itemize}
\end{defi}

\medskip

\begin{remark} ~
\label{rmk-parity}
\begin{enumerate}
\item A complex is $(\dagger,*)$-even if and only if for every $\l\in\Lambda$ the stalks on $X_\l$ are free and concentrated in degrees $\dagger(\l)$.
\item By \eqref{assump-parity} and a
  standard d\'evissage argument, $\FC$ is $(\dagger,?)$-even (resp. odd) if and
  only if the $i_{\lambda}^{?}\FC$ are isomorphic to \emph{direct
  sums} of objects in $\Loc(X_\lambda)$ shifted by elements of $\dagger(\l)$ (resp. $\dagger(\l)+1$).
\item A complex $\FC$ is $(\dagger,*)$-even (resp. odd) if and only if $\DM \FC$ is
  $(\dagger,!)$-even (resp. odd).
\item An indecomposable $\dagger$-parity complex is either $\dagger$-even or $\dagger$-odd.
\item A complex is $\dagger$-parity if and only if it is
  $\dagger'$-parity, where $\dagger'(\lambda) = \dagger(\lambda) + \ov 1$ for
  all $\lambda\in\Lambda$.
\item If the pariversity function $\dagger$ is clear from the context,
  we may drop it from the notation.
\item This definition is a geometric analogue of a notion
  introduced by Cline-Parshall-Scott in~\cite{CPS}. For example, the
  notion of $*$-even corresponds to their $\EC^L$.
\end{enumerate}
\end{remark}

For the rest of the section, we fix a pariversity function $\dagger$
and drop $\dagger$ from the notation.

Given a $*$-even $\FC \in D(X)$ write $X' := \supp \FC$ for the
support\footnote{
Contrary to the common usage, we call support of a sheaf (or
of a complex) the \emph{closure} of the set of points where its
stalks are non-zero.
} of $\FC$ and choose an open stratum $X_{\mu} \subset
X^{\prime}$. We denote by $i$ and $j$ the inclusions:
\begin{equation*}
X_{\mu} \stackrel{j}{\hookrightarrow} X \stackrel{i}{\hookleftarrow} X^{\prime} 
\setminus X_{\mu}
\end{equation*}
We have a distinguished triangle of $*$-even complexes
\begin{equation}
\label{tri1}
j_! j^! \FC \to \FC \to i_* i^* \FC \triright
\end{equation}
which is the extension by zero of the standard distinguished triangle
on $X^{\prime}$. (Note that $j^! \FC = j^* \FC$ because $j$ factors as
an open immersion into $X'$ followed by the 
inclusion of $X'$ into $X$.)
Dually, if $\GC \in D(X)$ is $!$-even and $i$, $j$ are
as above we have a distinguished triangle of $!$-even complexes 
\begin{equation}
\label{tri2}
i_! i^! \GC \to \GC \to j_* j^* \GC \triright.
\end{equation}

\begin{prop} \label{prop-parityhom}
If $\FC$ is $*$-parity and $\GC$ is $!$-parity, then 
we have a (non-canonical) isomorphism of graded $k$-modules
\[ \Hom^{\bullet}(\FC, \GC) \cong \bigoplus_{\lambda \in \Lambda} \Hom^{\bullet}(i_
\lambda^* \FC, i_{\lambda}^! \GC). \]
Moreover, both sides are free $k$-modules.
\end{prop}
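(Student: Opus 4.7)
The plan is to prove this by induction on the number of strata of $X$ meeting the support of $\FC$, using the open--closed distinguished triangle \eqref{tri1} to peel off one stratum at a time. The base case (empty support) is vacuous. For the inductive step, pick an open stratum $X_\mu$ in the support of $\FC$, and let $j : X_\mu \hookrightarrow X$ denote its open inclusion and $i : Z \hookrightarrow X$ the closed inclusion of the complementary closed subset $Z$ in the support. Applying $\Hom^\bullet(-, \GC)$ to \eqref{tri1} yields a long exact sequence whose outer terms, after the standard adjunctions ($i_* = i_!$ for $i$ closed and $j^! = j^*$ for $j$ open), are $\Hom^\bullet(i^* \FC, i^! \GC)$ and $\Hom^\bullet(i_\mu^* \FC, i_\mu^! \GC)$.

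The crux is to show that the connecting morphism in this long exact sequence vanishes. Since $\FC$ is $*$-parity and $\GC$ is $!$-parity, Remark \ref{rmk-parity}(2) writes $i_\mu^* \FC$ and $i_\mu^! \GC$ as direct sums of shifted local systems, with all shifts of a fixed parity. Combined with assumption \eqref{assump-parity}, this forces $\Hom^\bullet(i_\mu^* \FC, i_\mu^! \GC)$ to be concentrated in a single parity of degrees, determined by the combined parities of $\FC$ and $\GC$. Since $i^* \FC$ is $*$-parity and $i^! \GC$ is $!$-parity on $Z$ (which has strictly fewer strata meeting the relevant support), the inductive hypothesis gives $\Hom^\bullet(i^* \FC, i^! \GC) \cong \bigoplus_{\lambda \neq \mu} \Hom^\bullet(i_\lambda^* \FC, i_\lambda^! \GC)$, also concentrated in the same single parity. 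As the connecting morphism shifts degree by one, its source and target live in opposite parities, so it must vanish.

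The long exact sequence therefore collapses to a short exact sequence
\[ 0 \to \Hom^\bullet(i^* \FC, i^! \GC) \to \Hom^\bullet(\FC, \GC) \to \Hom^\bullet(i_\mu^* \FC, i_\mu^! \GC) \to 0, \]
and \eqref{assump-parity2} makes the right-hand term free over $k$, so this sequence splits. Combined with the inductive decomposition of the left term, one obtains the claimed abstract isomorphism.

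The main obstacle is identifying this isomorphism with the direct sum of the adjunction morphisms stated in the proposition. At the $\mu$-component, the quotient map is $\Hom^\bullet(\FC, \GC) \to \Hom^\bullet(j_! j^! \FC, \GC) \cong \Hom^\bullet(i_\mu^* \FC, i_\mu^! \GC)$, obtained by precomposition with the counit $j_! j^! \FC \to \FC$ followed by the adjunction $j_! \dashv j^!$: this is exactly the adjunction morphism. Naturality of the triangle \eqref{tri1} and the inductive hypothesis on $Z$ then match the components indexed by $\lambda \neq \mu$ with their adjunction counterparts, the key point being that those components factor canonically through the inclusion $\Hom^\bullet(i_* i^* \FC, \GC) \hookrightarrow \Hom^\bullet(\FC, \GC)$.
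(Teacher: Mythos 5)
Your proof is correct and follows essentially the same route as the paper's: induction on the number of strata in the support of $\FC$, the open--closed triangle \eqref{tri1}, parity vanishing to kill the connecting map, and freeness of the end terms to split the resulting short exact sequence. The only cosmetic difference is that the paper first normalises by shifting so that $\FC$ is $*$-even and $\GC$ is $!$-even, whereas you track the fixed parity through the argument; your final paragraph on matching the abstract splitting with the adjunction morphisms is a welcome detail the paper leaves implicit.
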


\begin{proof}
We may assume that $\FC$ and $\GC$ are indecomposable and, 
by shifting if necessary, that $\FC$ is $*$-even and
that $\GC$ is $!$-even.
We proceed by induction on the number $N$ of $\l\in\L$ such that
$i_\l^* \FC \neq 0$. If $N = 1$, then
$\FC \cong i_{\mu!}i_\mu^*\FC$ for some $\mu \in \L$, and by adjunction
\[
\Hom^\bullet (\FC,\GC) \cong \Hom^\bullet (i_{\mu!}i_\mu^*\FC,\GC)
\cong \Hom^\bullet (i_\mu^*\FC,i_\mu^! \GC).
\]

As we assumed $\FC$ to be $*$-even and $\GC$ to be $!$-even,
the complexes $i_\mu^*\FC$ and $i_\mu^! \GC$ are direct sums of shifts
of elements of $\Loc(X_\mu)$ concentrated in degrees congruent to
$\dagger(\l)$.  By \eqref{assump-parity} and \eqref{assump-parity2} we
conclude that $\Hom^\bullet (i_\mu^*\FC,i_\mu^! \GC)$ is free and
concentrated in even degrees.

If $N > 1$, applying $\Hom(-, \GC)$ to \eqref{tri1} yields
a long exact sequence 
\[ 
\dots \leftarrow \Hom^n(j_! j^!\FC, \GC) \leftarrow \Hom^n(\FC, \GC)
\leftarrow \Hom^n(i_* i^* \FC, \GC) \leftarrow \dots \] Now both
$\Hom^n(i_*i^*\FC,\GC)$ and $\Hom^n(j_!j^!\FC, \GC)$ vanish for $n$
odd and are free for $n$ even, respectively by induction and by the
base case, hence $\Hom^n(\FC,\GC)$ also vanishes for $n$ odd, and it
is an extension of $\Hom^n(i_* i^* \FC, \GC)$ by $\Hom^n(j_!
j^!\FC,\GC)$, hence also free, for $n$ even.
\end{proof}

\begin{remark}
The proof above shows that $\Hom^\bullet(\FC,\GC)$ is a
free $k$-module. If, moreover, $\FC$ and $\GC$ are indecomposable, then
the stalks (resp. costalks) of $\FC$ (resp. $\GC$) are concentrated in
degrees congruent to a fixed parity $a$ (resp. $b$) in $\ZM/2$, and
it follows that $\Hom^\bullet(\FC,\GC)$ is concentrated in
degrees congruent to $a + b$ modulo $2$.
\end{remark}

\begin{cor}
\label{prop-vanish}
If $\FC$ is $*$-even and $\GC$ is $!$-odd then
\begin{equation*}
\Hom(\FC, \GC) = 0.
\end{equation*}
\end{cor}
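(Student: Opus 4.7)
The plan is a two-line argument that chains Proposition~\ref{prop-parityhom} with the odd-Hom vanishing of assumption \eqref{assump-parity}. First I would apply Proposition~\ref{prop-parityhom}, which is valid since $\FC$ is in particular $*$-parity and $\GC$ is in particular $!$-parity, to obtain the decomposition
\[ \Hom(\FC, \GC) \cong \bigoplus_{\lambda \in \Lambda} \Hom(i_\lambda^* \FC,\, i_\lambda^! \GC). \]
It thus suffices to show that each summand on the right vanishes in degree $0$.

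Next, I would invoke Remark~\ref{rmk-parity}(2): since $\FC$ is $*$-even, each restriction $i_\lambda^* \FC$ is isomorphic to a direct sum of \emph{even} shifts $\LC[2m]$ of local systems $\LC \in \Loc(X_\lambda)$; dually, since $\GC$ is $!$-odd, each $i_\lambda^! \GC$ is a direct sum of \emph{odd} shifts $\LC'[2k+1]$ of local systems. Every summand of $\Hom(i_\lambda^* \FC, i_\lambda^! \GC)$ is therefore a group of the form $\Hom^{2k - 2m + 1}(\LC, \LC')$, i.e.\ an odd-degree Hom between local systems on the stratum $X_\lambda$, which vanishes by assumption~\eqref{assump-parity}.

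I do not anticipate any serious obstacle: the statement is essentially a one-line corollary of Proposition~\ref{prop-parityhom} combined with the parity-vanishing axiom. The only point to watch is the appeal to Remark~\ref{rmk-parity}(2), which itself relies on \eqref{assump-parity} via a devissage; should one wish to avoid it, one could instead mimic the inductive argument used to prove Proposition~\ref{prop-parityhom}, peeling off open strata via the distinguished triangles \eqref{tri1} and \eqref{tri2} and using adjunction $\Hom(j_! j^! \FC, i_* i^* \GC) = 0$ (for open $j$ and closed complementary $i$) together with the inductive vanishing at each step.
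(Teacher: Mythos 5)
Your proof is correct and matches the paper's intended argument: the corollary is stated without an explicit proof precisely because it follows immediately from Proposition~\ref{prop-parityhom} together with the parity-vanishing hypothesis~\eqref{assump-parity}, in exactly the way you describe.
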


\begin{cor}
\label{cor-surj}
If $\FC$ and $\GC$ are indecomposable parity complexes of the same
parity and $j:X_{\mu}\to X$ denotes the inclusion of a stratum which
is open in the support of both $\FC$ and $\GC$, then the functor $j^*$ gives a surjection:
\begin{equation*}
\Hom(\FC, \GC) \onto \Hom(j^* \FC, j^* \GC).
\end{equation*}
\end{cor}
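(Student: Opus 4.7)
The plan is to apply $\Hom(\FC, -)$ to the distinguished triangle \eqref{tri2} for $\GC$, taking $X_\mu$ as the chosen open stratum in the support $X'$ of $\GC$. Since $\GC$ is $!$-parity, the triangle
\[
i_! i^! \GC \to \GC \to j_* j^* \GC \triright
\]
consists of $!$-parity complexes of the same parity as $\GC$. Applying $\Hom(\FC, -)$ and using the adjunction identification $\Hom(\FC, j_* j^* \GC) = \Hom(j^*\FC, j^*\GC)$, I obtain a long exact sequence
\[
\Hom(\FC, \GC) \longto \Hom(j^*\FC, j^*\GC) \longto \Hom^1(\FC, i_! i^! \GC),
\]
so the desired surjectivity reduces to the vanishing of $\Hom^1(\FC, i_! i^! \GC)$.

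To establish this vanishing, I observe that $\FC$ is $*$-parity and $i_! i^! \GC$ is $!$-parity, both of the same parity by hypothesis (the latter inheriting its parity from $\GC$). Hence Proposition~\ref{prop-parityhom} applies and gives
\[
\Hom^\bullet(\FC, i_! i^! \GC) \cong \bigoplus_{\lambda \in \Lambda} \Hom^\bullet(i_\lambda^* \FC, i_\lambda^! i_! i^! \GC).
\]
By Remark~\ref{rmk-parity}(2), each $i_\lambda^* \FC$ and each $i_\lambda^! i_! i^! \GC$ is a direct sum of shifts of objects of $\Loc(X_\lambda)$ whose shifts all have the same parity. Combined with the parity-vanishing assumption \eqref{assump-parity}, this forces $\Hom^n$ to vanish for odd $n$, in particular for $n = 1$.

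The one small bookkeeping point is checking the $!$-parity of $i_! i^! \GC$: for a stratum $X_\lambda \subset X' \setminus X_\mu$ one has $i_\lambda^! i_! i^! \GC \cong i_\lambda^! \GC$ (so it is parity by hypothesis on $\GC$), while strata disjoint from $X' \setminus X_\mu$ contribute zero by proper base change. I do not expect any serious obstacle here; the argument is essentially a direct application of Proposition~\ref{prop-parityhom} to the triangle \eqref{tri2}, and indecomposability of $\FC$ and $\GC$ is not actually required for the surjectivity.
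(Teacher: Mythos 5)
Your proof is correct and takes essentially the same route as the paper: apply $\Hom(\FC,-)$ to \eqref{tri2} and kill the connecting term $\Hom^1(\FC,i_!i^!\GC)$ by a parity argument. The paper invokes Corollary~\ref{prop-vanish} directly (which is itself an immediate consequence of Proposition~\ref{prop-parityhom}), whereas you unwind it via Proposition~\ref{prop-parityhom}; also note the text just before \eqref{tri2} already records that the whole triangle is $!$-parity, so your bookkeeping check is already in place. Your remark that indecomposability is not needed for surjectivity is correct.
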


\begin{proof}
Apply $\Hom(\FC, -)$ to \eqref{tri2} and use Corollary \ref{prop-vanish}.
\end{proof}

The last corollary says that we can extend morphisms $j^*\FC \to j^*
\GC$ to morphisms $\FC \to \GC$. Now we want to investigate how
parity complexes behave when restricted to an open union of
strata. Before stating the result, let us recall the following simple result
from ring theory (whose proof is left as an exercise):

\begin{lem} \label{lem-quotient local}
  A quotient of a local ring is local.
\end{lem}

\begin{prop}
\label{prop-restriction}
Let $j: U \to X$ denote the inclusion of an open union of strata.  Then given an
indecomposable parity complex $\PC$ on $X$, its restriction to $U$ is
either zero or indecomposable.
\end{prop}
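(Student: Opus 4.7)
The plan is to show that if $j^*\PC \neq 0$, where $j\colon U \hookrightarrow X$ is the open inclusion, then $\End(j^*\PC)$ is a local ring; since $D(U)$ is also Krull-Schmidt, this forces $j^*\PC$ to be indecomposable. The strategy is to transfer the local property from $\End(\PC)$ to $\End(j^*\PC)$ by exhibiting the former as a surjection onto the latter with kernel inside the Jacobson radical, and then to lift idempotents via Proposition \ref{prop-feit}.

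First, I would apply Proposition \ref{prop-parityhom} to both $\PC \in D(X)$ and $j^*\PC \in D(U)$, obtaining
\begin{gather*}
\End^{\bullet}(\PC) \;\cong\; \bigoplus_{\lambda \in \Lambda} \Hom^{\bullet}(i_\lambda^*\PC,\, i_\lambda^!\PC), \\
\End^{\bullet}(j^*\PC) \;\cong\; \bigoplus_{\lambda \in \Lambda_U} \Hom^{\bullet}(i_\lambda^*\PC,\, i_\lambda^!\PC),
\end{gather*}
where $\Lambda_U = \{\lambda \in \Lambda \mid X_\lambda \subset U\}$. Since $i_\lambda = j \circ i_\lambda$ for $\lambda \in \Lambda_U$, these two decompositions are compatible with the functorial restriction map $j^*\colon \End(\PC) \to \End(j^*\PC)$, which then becomes simply the projection onto the summands indexed by $\Lambda_U$. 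In particular $j^*$ is a surjective ring homomorphism.

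Next, under the assumption $j^*\PC \neq 0$, the target is nonzero, so the kernel $I := \Ker(j^*)$ is a proper two-sided ideal of $\End(\PC)$. Since $\PC$ is indecomposable in the Krull-Schmidt category $D(X)$, the ring $\End(\PC)$ is local, and every proper ideal is contained in the unique maximal ideal, which coincides with the Jacobson radical. Hence $I \subset \rad \End(\PC)$, and Proposition \ref{prop-feit} applies (given that $\End(\PC)$ is a finitely generated $k$-algebra, which is ensured by our parity/freeness assumptions combined with finiteness of the stratification). Therefore any idempotent $e$ of $\End(j^*\PC) \cong \End(\PC)/I$ lifts to an idempotent $\tilde{e}$ of $\End(\PC)$; but the local ring $\End(\PC)$ has only the idempotents $0$ and $\id$, and the same must be true of its quotient $\End(j^*\PC)$. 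Consequently $j^*\PC$ admits no nontrivial direct sum decomposition.

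The main obstacle is the verification that Proposition \ref{prop-feit} is actually applicable — specifically, confirming that $\End(\PC)$ has the finiteness and completeness properties needed to lift idempotents through $I$. Everything else is essentially formal: the surjectivity of $j^*$ on endomorphism rings is almost tautological once Proposition \ref{prop-parityhom} is in place, and the rest is the standard Krull-Schmidt style argument that a local ring modulo an ideal in its radical remains free of nontrivial idempotents.
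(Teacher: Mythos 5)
Your proof is correct and essentially identical to the paper's: Proposition \ref{prop-parityhom} gives the surjection $\End(\PC) \twoheadrightarrow \End(\PC_{|U})$, locality of $\End(\PC)$ places the kernel in the Jacobson radical, and Proposition \ref{prop-feit} lifts idempotents to conclude $\End(\PC_{|U})$ has only trivial ones. (One small slip in your opening plan: you do not actually establish that $\End(j^*\PC)$ is local, only that it has no nontrivial idempotents --- but in a Krull--Schmidt category that already forces indecomposability, so the argument goes through.)
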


\begin{proof} 
Suppose that $\PC$ has non-zero restriction to $U$.
As in the proof of Corollary \ref{cor-surj}, the functor $\Hom(\PC,-)$ applied to the appropriate adjunction triangle together with Corollary \ref{prop-vanish} shows that restriction yields a
surjection
\[ \End(\PC) \twoheadrightarrow \End(\PC_{|U}).\]
It follows by Lemma \ref{lem-quotient local} that $\End(\PC_{|U})$ is
a local ring, and hence $\PC_{|U}$ is indecomposable.
\end{proof}

\begin{thm}
 \label{indecs}
Let $\FC$ be an indecomposable parity complex. Then
\begin{enumerate}
\item \label{irr supp}
the support of $\FC$ is irreducible, hence of the form
$\ov X_\lambda$, for some $\lambda \in \Lambda$;

\item \label{indec loc}
the restriction $i_\lambda^* \FC$ is isomorphic to $\LC[m]$, for some
indecomposable object $\LC$ in $\Loc(X_\lambda)$ and some integer $m$;

\item \label{unique}
any indecomposable parity complex supported on $\ov X_\lambda$ and
extending $\LC[m]$ is isomorphic to $\FC$.
\end{enumerate}
\end{thm}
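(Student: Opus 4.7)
The plan is to establish the three claims in order, leveraging Proposition \ref{prop-restriction}, Corollary \ref{cor-surj}, and the Krull-Schmidt property of $D(X)$, together with the parity hypothesis \eqref{assump-parity}.

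For part \ref{irr supp}, I argue by contradiction. Suppose the support $Z$ of $\FC$ has at least two maximal strata $X_{\lambda_1},\ldots,X_{\lambda_n}$ with $n \geq 2$. Set $Z_1 := \overline{X}_{\lambda_1}$, $Z_2 := \bigcup_{i\geq 2} \overline{X}_{\lambda_i}$, and $U := X \setminus (Z_1 \cap Z_2)$. Since the stratification is algebraic each $Z_i$ is a union of strata, so $U$ is an open union of strata. Inside $U$ the closed subsets $Z_1 \setminus Z_2$ and $Z_2 \setminus Z_1$ are disjoint, and the maximality of the $\lambda_i$ forces $X_{\lambda_1} \subset Z_1 \setminus Z_2$ and $X_{\lambda_2} \subset Z_2 \setminus Z_1$, so both meet $Z$. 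Hence $\FC|_U$ splits as a nontrivial direct sum along these disjoint supports, contradicting Proposition \ref{prop-restriction}; so $Z$ has a unique maximal stratum $X_\lambda$.

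For part \ref{indec loc}, I apply Proposition \ref{prop-restriction} to $U' := X \setminus (\overline{X}_\lambda \setminus X_\lambda)$, in which $X_\lambda$ is closed, to conclude that $\FC|_{U'} = (i_\lambda)_* i_\lambda^* \FC$ is indecomposable, hence so is $i_\lambda^* \FC$ in $D(X_\lambda)$. I then show that any parity complex on the smooth connected stratum $X_\lambda$ splits as a direct sum of shifts of local systems: the obstruction to splitting the truncation triangle between cohomology sheaves in even degrees $2a < 2b$ lies in $\Ext^{2(b-a)+1}$ between local systems, which vanishes by \eqref{assump-parity}. Indecomposability then forces $i_\lambda^* \FC \cong \LC[m]$ for a single indecomposable $\LC \in \Loc(X_\lambda)$ and some integer $m$.

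For part \ref{unique}, given a second such complex $\FC'$ with $i_\lambda^* \FC' \cong \LC[m]$, Corollary \ref{cor-surj} yields surjections $\Hom(\FC, \FC') \twoheadrightarrow \End(\LC)$ and $\Hom(\FC', \FC) \twoheadrightarrow \End(\LC)$. Pick lifts $f$ and $g$ of $\id_\LC$; then $g \circ f \in \End(\FC)$ and $f \circ g \in \End(\FC')$ both restrict to the identity. The main obstacle is showing these composites are invertible: since $\End(\FC)$ is local by Krull-Schmidt and surjects onto the nonzero local ring $\End(\LC)$, the kernel is a proper ideal of $\End(\FC)$ and hence contained in its unique maximal ideal. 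Thus any endomorphism whose restriction is a unit is itself a unit, so $g \circ f$ and $f \circ g$ are both invertible, from which $f$ is an isomorphism.
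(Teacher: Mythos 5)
Your proof is correct and follows essentially the same route as the paper: Proposition \ref{prop-restriction} for parts \eqref{irr supp} and \eqref{indec loc}, and Corollary \ref{cor-surj} plus locality of $\End(\FC)$ for part \eqref{unique}. You supply slightly more detail than the paper (a careful choice of open union of strata in part \eqref{irr supp} rather than the paper's shorthand $U = X_\lambda \cup X_\mu$, and the devissage argument in part \eqref{indec loc}), but the underlying ideas are identical.
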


\begin{proof} 
Suppose for contradiction that $X_\lambda$ and $X_\mu$ are open in
the support of $\FC$, where $\lambda$ and $\mu$ are two
distinct elements of $\Lambda$. Let $U = X_\lambda \cup X_\mu$.
Then $\FC_{|U} \simeq \FC_{X_\lambda} \oplus \FC_{X_\mu}$, contradicting
Proposition \ref{prop-restriction}. This proves \eqref{irr supp}.
The assertion \eqref{indec loc} also follows from Proposition \ref{prop-restriction}.

Now let $\GC$ be an indecomposable parity complex supported on 
$\ov X_\lambda$ and such that $i_\lambda^*\GC \simeq \LC[m]$. By composition,
we have inverse isomorphisms
$\alpha : i_\lambda^*\FC \elem{\sim} i_\lambda^*\GC$
and $\beta : i_\lambda^*\GC \elem{\sim} i_\lambda^*\FC$.
By Corollary \ref{cor-surj}, the restriction
$\Hom(\FC,\GC) \to \Hom(i_\lambda^*\FC,i_\lambda^*\GC)$ is surjective.
So we can lift $\alpha$ and $\beta$ to morphisms
$\tilde\alpha : \FC \to \GC$ and $\tilde\beta : \GC \to \FC$.
By Corollary \ref{cor-surj} again, the restriction
$\End(\FC) \to \End(i_\lambda^*\FC)$ is surjective.
Since $\tilde\beta \circ \tilde\alpha$ restricts to
$\beta \circ \alpha = \id$, the locality of $\End(\FC)$ implies
that $\tilde \beta \circ \tilde\alpha$ is invertible itself, and similarly for
$\tilde \alpha \circ \tilde \beta$. This proves \eqref{unique}.
\end{proof}

\begin{remark}
If $k$ is a field, one can replace ``indecomposable'' by ``simple'' in
\eqref{indec loc}, due to our assumptions on $X$.
\end{remark}

We now introduce the main character of our paper.

\begin{defi}
\label{def-parsheaf}
Let $\dagger$ be a pariversity. A $\dagger$-\textbf{parity sheaf} is
an indecomposable $\dagger$-parity complex with $X_{\lambda}$ open in
its support and extending $\LC[d_\lambda]$ for some indecomposable
$\LC\in \Loc(X_\lambda)$.  When such a complex exists, we will denote
it by $\EC^\dagger(\lambda,\LC)$. We call $\EC^\dagger(\lambda,\LC)$
the $\dagger$-parity sheaf associated to the pair $(\lambda,\LC)$.
\end{defi}

\begin{remark}
\begin{enumerate}
\item More generally, for $\LC$ not indecomposable, we will let
$\EC^\dagger(\lambda,\LC)$ denote the direct sum of the parity sheaves
associated to the direct summands of $\LC$.
We may also use the notation $\EC^\dagger(\ov X_\lambda,\LC)$.
\item If $\LC = \uk_{X_\l}$ is the constant local system, we may write
  $\EC^\dagger(\lambda, k)$, (or even $\EC^\dagger(\lambda)$ if the
  coefficient ring $k$ is clear from the context).
\item If the pariversity is clear from the context, we may also drop it from our notation.
\end{enumerate}
\end{remark}

Thus, any indecomposable parity complex is isomorphic to some
shift of a parity sheaf $\EC(\lambda,\LC)$. The reason for the
normalisation chosen in the last definition is explained
by the following proposition:

\begin{prop}
For any pariversity $\dagger$, $\lambda$ in $\Lambda$ and $\LC$ in $\Loc(X_\lambda)$,
we have
\[
\DM \EC^\dagger(\lambda,\LC) \simeq \EC^\dagger(\lambda,\LC^\vee).
\]
\end{prop}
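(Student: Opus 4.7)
The plan is to apply the uniqueness statement in Theorem~\ref{indecs}(3) to $\DM\EC(\lambda,\LC)$ with respect to the pair $(\lambda,\LC^\vee)$: I need to verify that $\DM\EC(\lambda,\LC)$ is an indecomposable parity complex supported on $\ov X_\lambda$ and restricting on $X_\lambda$ to $\LC^\vee[d_\lambda]$, with $\LC^\vee$ indecomposable.

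Since $\DM$ is a contravariant auto-equivalence of $D(X)$, it preserves supports and sends indecomposables to indecomposables. To see that it also preserves parity complexes, I would use the standard exchange isomorphisms $i_\mu^*\DM \simeq \DM\, i_\mu^!$ and $i_\mu^!\DM \simeq \DM\, i_\mu^*$. On the smooth stratum $X_\mu$ the dualizing complex is $\uk_{X_\mu}[2d_\mu]$, so $\DM_{X_\mu}$ sends a local system placed in cohomological degree $n$ to its dual placed in degree $2d_\mu - n$, which has the same parity. Thus $\DM$ swaps $*$- and $!$-parity while preserving the parity (even or odd) of degrees, and consequently sends even (resp.\ odd) complexes to even (resp.\ odd) complexes. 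Hence $\DM\EC(\lambda,\LC)$ is an indecomposable parity complex with support closure $\ov X_\lambda$.

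For the restriction on the open stratum, I observe that $X_\lambda$ is open in $\supp\EC(\lambda,\LC) = \ov X_\lambda$, so we may work on the open subvariety $U := X\setminus(\ov X_\lambda\setminus X_\lambda)$, inside which $X_\lambda$ is closed. Verdier duality commutes with restriction along open immersions, so
\[
i_\lambda^*\DM\EC(\lambda,\LC) \simeq \DM_{X_\lambda}\bigl(\EC(\lambda,\LC)|_{X_\lambda}\bigr) \simeq \DM_{X_\lambda}(\LC[d_\lambda]) \simeq \LC^\vee[d_\lambda],
\]
again using $\omega_{X_\lambda} = \uk_{X_\lambda}[2d_\lambda]$. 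Since dualization is an involutive equivalence of $\Loc(X_\lambda)$, $\LC^\vee$ is indecomposable, and uniqueness in Theorem~\ref{indecs}(3) yields the desired isomorphism $\DM\EC(\lambda,\LC) \simeq \EC(\lambda,\LC^\vee)$. The only point requiring real care is the parity bookkeeping in the previous step: one must confirm that the combined effect of swapping $*$ and $!$, dualizing the coefficients of a local system, and shifting by $2d_\mu$ leaves the even and odd parity classes separately invariant.
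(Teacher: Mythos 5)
Your argument is correct and follows exactly the same route as the paper's proof: observe that Verdier duality preserves indecomposable parity complexes, supports, and the restriction to the open stratum (sending $\LC[d_\lambda]$ to $\LC^\vee[d_\lambda]$), then invoke uniqueness from Theorem~\ref{indecs}\eqref{unique}. The paper simply summarises the self-duality as "clear" (cf.\ Remark~\ref{rmk-parity}), whereas you spell out the exchange isomorphisms and the degree-parity bookkeeping, which is a reasonable elaboration rather than a different proof.
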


\begin{proof}
The definition of $\dagger$-parity sheaf is clearly self-dual, so
$\DM \EC^\dagger(\lambda,\LC)$ is a $\dagger$-parity sheaf. Moreover, it
is supported on $\ov X_\lambda$ and extends
$\LC^\vee[d_\lambda]$. By the uniqueness theorem,
it is isomorphic to $\EC^\dagger(\lambda,\LC^\vee)$.
\end{proof}

\begin{remark}
\label{rem-vague?}
We give many examples of parity sheaves below.  In the
next section we also give a few examples of situations in which a full
set of parity sheaves does not exist, and, at the end of the paper,
examples of parity sheaves that are not perverse (see Proposition
\ref{prop:min}).

Despite such examples, in many cases of interest, parity sheaves do
exist and are perverse.  For example, if $X$ is a flag variety
stratified by Schubert cells and $k$ a field of characteristic zero,
the $\natural$-parity sheaves are the intersection cohomology complexes,
while the $\Diamond$-parity sheaves are the indecomposable tilting sheaves.
\end{remark}

\subsection{Parity extensions and an existence criterion}
\label{subsec-exist}

We have now explained that associated to each 
indecomposable local system on a stratum, there exists (up to 
isomorphism) at most one parity extension for a fixed pariversity.  In 
this section, we begin to explore when such an extension does in fact 
exist.

Following the construction of the intersection cohomology sheaves in \cite{BBD}, we present a
criterion for the existence of parity sheaves and, when they
exist, an explicit construction. This section concludes with some
examples of spaces and pariversities for which parity extensions do not
exist.

We begin with a lemma that characterises extensions of a complex from
an open set. 
We will use it below to develop our existence criterion.

\subsubsection{Extensions from an open set}

Let $j:U \to X$ be an open embedding and $i:Z \to X$ be the closed 
embedding of the complement. Recall that an extension of a complex $\FC_U \in D(U)$ is a pair $(\FC, \alpha)$ where $\FC \in D(X)$ is a complex and $\alpha : j^*\FC \stackrel{\sim}{\to} \FC_U$ is an isomorphism. Extensions of $\FC_U$ form a category in a natural way.

\begin{lem}
  Fix $\FC_U \in D(U)$. There is a natural bijection between isomorphism classes of extensions $(\FC, \a)$ of $\FC_U$ and isomorphism classes of distinguished triangles in $D(Z)$ of the form:
\[
A \to i^*j_*\FC_U \to B \triright
\]
Under this bijection we have $i^*\FC \cong A$ and $i^!\FC \cong B[-1]$.
\end{lem}

\begin{proof} We describe the maps in both directions. We leave it to the reader to check that these maps do indeed provide a bijection on isomorphism classes.

Suppose first that we are given an extension $(\FC,\alpha)$. Then we associate to $(\FC,\alpha)$ the distinguished triangle
\begin{equation} \label{eq:triext1}
i^*\FC \to i^*j_* \FC_U \to i^!\FC[1] \triright
\end{equation}
obtained by rotating the standard distinguished triangle $i^!\FC \to i^*\FC \to i^*j_*j^*\FC \triright$ \cite[1.4.7.2]{BBD} and using our isomorphism $\a$. It is clear that the isomorphism class of the resulting triangle depends only on the isomorphism class of the extension $(\FC, \a)$.

In the other direction, given a distinguished triangle:
\begin{equation} \label{eq:triext2}
A \to i^*j_* \FC_U \to B\triright
\end{equation}
we can certainly build an octahedron:
\vspace{-3cm}
\begin{equation*}
  \xymatrix@dr{
&& & \\
    && {j_!\FC_U[1]} 
\ar[r]^-{} & &   \\ 
     &{i_*A} \ar@(ur,ul)[ru]^-{}
    \ar[r]^-{} & 
    {i_*i^*j_*\FC_U} \ar[r]^-{} \ar[u]^-{} & 
    {i_*B} 
    \ar[u]_{} & \\ 
    i_*B[-1] \ar[r]^-{b} \ar@(ur,ul)[ru] & \GC \ar[u]^-{a} \ar[r]^-{} 
    & {j_*\FC_U} \ar@(dr,dl)[ru]^-{f} \ar[u]_{} \\  
    \ar[u] \ar[r] &{j_! \FC_U} \ar[u]^-{} \ar@(dr,dl)[ru]^-{} \\
    &
  }
\end{equation*}
To such a distinguished triangle we associate the extension $(\GC, \b)$, where $\b$ is obtained by adjunction from the map $\GC \to j_*\FC_U$.

The final statement of the lemma is clear by construction.
\end{proof}

\begin{remark}
Note that for $\FC_U$ perverse, the perverse extensions $\p j_! \FC_U$,
$\p j_{!*} \FC_U$ and $\p j_* \FC_U$ correspond to the perverse truncation
triangles
\[
\p\tau^{<n} i^*j_* \FC_U \to i^*j_* \FC_U \to \p\tau^{\geq n} i^*j_* \FC_U \triright,
\]
where $n = -1$, $0$, $1$ \cite[Proposition 1.4.23]{BBD}.
\end{remark}

\subsubsection{Parity extensions}

Having considered the case of general extensions, we now turn our
attention to the question of when parity extensions exist and how to
construct them. For the rest of this section we use the notation and
assumptions of Section \ref{subsec-naa}.

We proceed by imitating Deligne's construction of the intersection cohomology sheaves.
The method is a descending induction on the poset of strata.  The following corollary describes the inductive step.

\begin{cor}
\label{cor-parext}
Let $Z$ be a closed stratum. Fix a
pariversity $\dagger$ and let $\FC_U\in D(U)$ be a $\dagger$-even
complex.  There exists a $\dagger$-even extension of $\FC_U$ if and
only if there exists a distinguished triangle in $D(Z)$ of the form
\[
A \to i^*j_* \FC_U \to B \triright
\]
where $A$ is $\dagger$-even and $B$ is $\dagger$-odd.
\end{cor}

\begin{proof} Immediate from the previous lemma.
\end{proof}

To identify when such extensions are indecomposable, we need the following lemma.

\begin{lem} \label{lem:summandsupport}
A complex $\FC\in D(X)$ has no summands supported within a closed
subset $i:Z \to X$ if and only the map $i^!\FC \to i^*\FC$ cannot be
expressed as a direct sum $\id_Q \oplus\, h$, where $i^!\FC \cong Q
\oplus \GC$, $i^*\FC \cong Q \oplus \GC'$ and $h:\GC \to \GC'$, with $Q\neq 0$.
\end{lem}

\begin{proof}
Assume that $\FC \cong i_*Q \oplus \FC'$.  Then
$i^? \FC \cong Q \oplus i^? \FC'$ and the co\-res\-tric\-tion-to-res\-tric\-tion
map decomposes as a direct sum of the co\-res\-tric\-tion-to-res\-tric\-tion map
for $i_*Q$ (which is simply the identity map $\id_Q$) and that for
$\FC'$.

Conversely, suppose that the corestriction-to-restriction map for
$\FC$ can be expressed as $\id_Q \oplus h$ as above.  As the
corestriction-to-restriction map factors through $\FC$, we find that
there is a commutative diagram:
\[
\xymatrix{
& i_* i^! \FC \ar[r]^{\sim} \ar[d] \ar[ld]
& i_*(Q \oplus \GC) \ar[d]^{\id_Q \oplus h}
& i_* Q \ar[d]^{=} \ar[l]
\\
\FC \ar[r]
& i_* i^*\FC \ar[r]^{\sim}
& i_* (Q\oplus \GC') \ar[r]
& i_* Q
}
\]
It follows that $i_* Q$ is a direct summand of $\FC$.
\end{proof}

We now return to the question of parity extensions and the notation of
Corollary \ref{cor-parext}. As $A$ and $B$ are parity,
\eqref{assump-parity} and \eqref{assump-parity2} imply that $A\cong \oplus_{n} \HC^{n}(A)[-n]$
and $B \cong \oplus_{n} \HC^{n}(B)[-n]$.  Let $\phi_n$ be the
composition of the inclusion $\HC^{n}(B)[-n]\hookrightarrow B$, the
connecting map $B \to A[1]$, and the projection $A[1] \to
\HC^{n+1}(A)[-n]$.

\begin{cor}
\label{cor-parextind}
The parity extension $\FC$ is indecomposable if and only the image of each morphism $\phi_n$ defined above does not contain any non-zero direct-summand of $H^{n+1}(A)$.  If $k$ is a field, this condition is equivalent to $\phi_n=0$ for all $n$.
\end{cor}

\begin{proof}
This statement is an application of the previous lemma together with the fact that $H^n(A)$ is in $\Loc(Z)$ and therefore projective in the category of all local systems.  

In this case, as mentioned in the remark above, the corestriction and
restriction are both isomorphic to the direct sum of (the shifts of)
their cohomology sheaves.  Thus the condition of the lemma can be
checked in each degree separately.  Now for a single degree, the map $\phi_n$ maps to a projective object, so any direct summand contained in the image of the map is also a direct summand of the source.  The result follows.
\end{proof}

\begin{prop}
\label{claim:both exist}
Assume again that $Z$ consists of a single stratum and that $\FC_U$ is a $\dagger$-even complex on
$U$, where $\dagger$ is some pariversity on $X \setminus Z$.
Then there exists a $\tilde\dagger$-even complex $\FC$ on $X$
extending $\FC_U$ for both pariversities $\tilde\dagger$ extending $\dagger$
if and only if the complex $i^*j_*\FC_U\in D(Z)$ is parity (i.e. isomorphic
to a direct sum of shifts of elements of $\Loc(Z)$).
\end{prop}

\begin{proof}
If $i^*j_*\FC_U$ is isomorphic to a direct sum of shifts of objects in
$\Loc(Z)$, then it is clear that the required triangles exist and so
both parity extensions exist.

Suppose that there exist parity extensions of each parity.  Then there
are triangles
\[
A \to i^*j_*\FC_U \to B\triright\qquad \text{and}\qquad A' \to i^*j_*\FC_U \to B'\triright
\]
with $A,B'$ even and $A',B$ odd.  Without loss of generality, we
may assume that the parity extensions have no non-zero summands with support contained in
$Z$.  By Corollary \ref{cor-parextind}, this means that the images of the
morphisms $\phi_{2n-1}:\HC^{2n-1}(B) \to \HC^{2n}(A)$ (resp.
$\phi'_{2n}:\HC^{2n}(B') \to \HC^{2n+1}(A')$) do not contain a non-zero direct summand of $\HC^{2n}(A)$ (resp. $\HC^{2n+1}(A')$).

Consider the associated long exact sequences of cohomology sheaves for
the above triangles.  We find exact sequences
\[
0 \to \im(\phi_{2n-1}) \to \HC^{2n}(A) \to \HC^{2n}(i^*j_*\FC_U) \to 0,
\]
\[
0 \to \im(\phi'_{2n}) \to \HC^{2n+1}(A') \to \HC^{2n+1}(i^*j_*\FC_U) \to 0,
\]
as well as inclusions
\[
0 \to \HC^{2n}(i^*j_*\FC_U) \to \HC^{2n}(B'),\quad
0 \to \HC^{2n+1}(i^*j_*\FC_U) \to \HC^{2n+1}(B).
\]
As $\HC^{2n}(i^*j_*\FC_U)$ (resp. $\HC^{2n+1}(i^*j_*\FC_U)$) is a
subobject of $\HC^{2n}(B') \in \Loc(Z)$ (resp. $\HC^{2n+1}(B)$), it is
an element of $\Loc(Z)$ (remember that our ring of coefficients is a
PID by assumption).  In particular, it is a projective object in
the category of all local systems on $Z$, by Remark \ref{rem-par}(3).  It follows that the short
exact sequences above split and thus $\im(\phi_{2n-1})$ and
$\im(\phi_{2n})$ are direct summands. By our assumption above, they must be trivial.

We conclude that the induced maps $\HC^{2n}(A) \to
\HC^{2n}(i^*j_*\FC_U)$ (resp. $\HC^{2n+1}(A') \to
\HC^{2n+1}(i^*j_*\FC_U)$) are isomorphisms.  Thus the map $A\oplus A'
\to X$ induces isomorphisms on cohomology sheaves and therefore is an
isomorphism in $D(Z)$. The complex $A$ is even and $A'$ is odd, so
$i^*j_*\FC_U$ is a parity complex.
\end{proof}

\begin{remark}
\label{rem-directsum}
If the equivalent conditions of the above claim hold, and $\FC$ is a
parity extension with no summands supported on $Z$, then $i^*j_*\FC_U$
is isomorphic to the direct sum $i^* \FC \oplus i^! \FC[1]$.
\end{remark}

\begin{remark}
It is natural to ask whether the parity extension can be made into a
functor. Then the assignment of $A$ and $B$ to $\FC_U$ in the triangle
\begin{equation}
 \label{eq:extension triangle}
A \to i^*j_*\FC_U \to B \triright
\end{equation}
should be functorial as well. Though there are some situations where
this is the case, as we will see below (for example, when all strata
are contractible), in general there is no reason why it should be
true. By \cite[Proposition 1.1.9]{BBD} a sufficient condition
for the assignment of $A$ and $B$ to be functorial would be that $\Hom(A[1], B) = 0$ for any
two objects $A$ (resp. $B$) appearing on the left (resp. right) of
\eqref{eq:extension triangle} for any $\FC_U$. In the case of a parity extension, $A$ is
assumed to the $\dagger$-even, and $B$ $\dagger$-odd; hence
$\Hom(A[1], B) \ne 0$ in general, and this argument does not apply.

This is in contrast to the case of the $\p j_!$, $\p j_{!*}$ and $\p j_*$
extensions which are obtained via a truncation triangle, which is
functorial and satisfies $\Hom(A[1], B) = 0$. The three perverse
extensions can then be constructed by either $j_!$ or $j_*$ followed by a
functor of truncation on the closed part, which is obtained by gluing
a degenerate $t$-structure on the open part and a given $t$-structure
on the closed part \cite[Proposition 1.4.23]{BBD}.
\end{remark}

Lemma \ref{lem:summandsupport} gave a criterion for determining when a
complex has summands supported on a fixed closed subset.  The
following proposition refines the lemma in a particular
situation and will be useful when we come to discuss the
Decomposition Theorem in Section \ref{sec-decomp}.

\begin{prop} \label{prop:decompwithsupport}
Assume the ring of coefficients $k$ is a field.  Let $i:X_\l \into X$ be a
closed stratum.  Let $\FC\in D(X)$ be a complex such that $i^!\FC$ and
$i^*\FC$ are semi-simple.\footnote{We say a complex $Q\in D(F)$ is
  semi-simple if $Q\cong \bigoplus \HC^n(Q)[-n]$ and $\HC^n(Q)$ is
  semi-simple as an object of $\Loc(X_\l)$ for all $n$. (The
second condition is automatic, by Remark
\ref{rem-par}(3).)}
Let $D_\l$ denote the canonical
morphism
\[
D_\l  : i^! \FC \to i^* \FC
\]
and set
\[
D_\l^n := \HC^n(D_\l) : \HC^n(i^!\FC) \to \HC^n(i^*\FC).
\]
By assumption, both $\HC^n(i^!\FC)$ and $\HC^n(i^*\FC)$ are
semi-simple and so we can (and do) choose splittings
\begin{gather}
\label{eq:ss1}   \HC^n(i^!\FC)  \cong \ker D^n_\l \oplus (\HC^n(i^!\FC) / \ker
  D^n_\l),  \\
\label{eq:ss2} \HC^n(i^*\FC) \cong \im D^n_\l \oplus (\HC^n(i^*\FC) / \im D^n_\l ).
\end{gather}
Then there is an isomorphism
\[
\FC \cong  \GC \oplus \bigoplus_{n \in \ZM} (i_*( \HC^n(i^! \FC) / \ker
D^n_\l) [-n])
\]
where $\GC$ is a complex having no direct summand
supported on $X_\l$.
\end{prop}

\begin{proof} Fix isomorphisms $i^?\FC \cong \bigoplus
  \HC^n(i^?\FC)[-n]$. Composing these with $D_\l$ we obtain a map
\[
\bigoplus \HC^n(i^!\FC)[-n] \stackrel{\sim}{\to} i^!\FC \stackrel{D_\l}{\longrightarrow} i^*\FC
\stackrel{\sim}{\to} \bigoplus \HC^n(i^*\FC)[-n].
\]
The splittings \eqref{eq:ss1} and \eqref{eq:ss2} then yield a map
\begin{equation} \label{eq:sscomp}
c: \bigoplus (\HC^n(i^!\FC) / \ker D^n_\l)[-n] {\to} i^!\FC
\stackrel{D_\l}{\longrightarrow} i^*\FC {\to} \bigoplus (\im
D^n_\l)[-n].
\end{equation}
Note that $\HC^n(c)$ is an isomorphism for all $n$, and hence $c$ is
an isomorphism. Applying $i_! = i_*$ to \eqref{eq:sscomp} and using
that the natural map $i_!i^!\FC \to i_*i^*\FC$ factors through $\FC$
(as in the proof of Lemma \ref{lem:summandsupport}) we get that
$\bigoplus ((\HC^n(i^!\FC) / \ker D^n_\l)[-n])$ is a direct summand of
$\FC$. Hence we have an isomorphism
\begin{equation} \label{eq:ssdecomp}
\FC \cong \GC \oplus \bigoplus i_*((\HC^n(i^!\FC) / \ker D^n_\l)[-n])
\end{equation}
for some $\GC \in D(X)$.

It remains to see that $\GC$ does not have any direct summands
supported on $X_\l$. To this end, fix $n \in \ZM$ and let $\iota : \GC
\to \FC$ and $\pi : \FC \to i_*((\HC^n(i^!\FC) / \ker
D^n_\l)[-n])$ denote the inclusion and projections obtained from the
isomorphism \eqref{eq:ssdecomp}. If we
apply $\HC^n(i^! -)$ we see that $0 = \HC^n(i^! (\pi \circ \iota)) :
\HC^n(i^! \GC)
\to \HC^n(i^!\FC) / \ker D^n_\l$. Hence the image of $\HC^n(i^!
(\iota)) : \HC^n(i^! \GC) \to \HC^n(i^* \GC)$ is contained in $\ker
D^n_\l$. It follows that the map $\HC^n(i^! \GC) \to \HC^n(i^* \GC)$ obtained by
applying $\HC^n(-)$ to the natural map $i^! \GC \to i^* \GC$ is zero.

We conclude that $\HC^n(i^! \GC) \to \HC^n(i^* \GC)$ is zero for all
$n$ and hence that there is no direct summand of $i^!\GC$ 
mapped isomorphically onto a direct summand of $i^* \GC$. Hence $\GC$
does not have any direct summands supported on $X_\l$ by Lemma
\ref{lem:summandsupport}. This completes the proof of the proposition.
\end{proof}

\subsubsection{Constructible complexes with coefficients in a field
  for a stratification with contractible strata} \label{subsec:fieldcontract}

In this section we assume that we are in the non-equivariant setting and that $k$, our ring of coefficients, is a field. We use the construction from the previous
paragraph to prove that if all strata are contractible then parity sheaves exist
on all strata and for all pariversities.

\begin{lem} 
\label{prop:contractible existence}
Suppose that $Z$ is a contractible closed stratum of $X$ and let $U := X \setminus Z$ denote the complement. Fix a pariversity $\dagger$ on $U$ and let $\FC_U \in D(U)$ denote a $\dagger$-parity complex. Then there exists an extension $\FC$ of $\FC_U$ to $X$ whose restriction and corestriction to $Z$
  is even.  Similarly there exists an extension whose restriction
  and corestriction is odd.
\end{lem}

\begin{proof}  By Corollary \ref{cor-parext} it suffices to show that there exists a triangle
\[
A \to i^*j_* \FC_U \to B \triright
\]
with $A,B\in D(Z)$ respectively even and odd.  As $Z$ is contractible and $k$ a field, any object in $D(Z)$ is isomorphic to a direct sum of shifts of constant sheaves.  Thus $i^*j_* \FC_U \cong \HC^{\even}(i^*j_* \FC_U) \oplus \HC^{\odd}(i^*j_* \FC_U)$ and we have the split triangle:
\[
\HC^{\even}(i^*j_* \FC_U) \to i^*j_* \FC_U \to \HC^{\odd}(i^*j_* \FC_U) \triright.
\]
Under the bijection described in Corollary \ref{cor-parext} this triangle corresponds to the desired extension.  Entirely analogously, as the sequence splits, the maps can be directed in the other direction, producing an `odd' extension.
\end{proof}

\begin{cor}
\label{cor:contractible existence}
Let $\dagger$ be a pariversity.  Let $X$ be stratified by contractible strata and $k$ be a field.  Then for every stratum, there exists a parity sheaf $\EC^\dagger(\l,k)\in D(X)$.
\end{cor}

\subsubsection{Non-examples}
\label{subsec-nonexamples}

By modifying the conditions required in the previous paragraph, one
can find examples of spaces satisfying the parity conditions, for
which some parity extensions do not exist. 
We conclude this section by mentioning three such examples.

\begin{ex} 
Let $k=\ZM_2$, the ring of 2-adic integers. Consider the affine Grassmannian for
  $GL_2$, whose $GL_2(\CM[[t]])$-orbits $\Gr^\l$ are labelled by
  highest weights $(l_1,l_2)\in \ZM^2$ for $GL_2$.  Let $X$ be the
  orbit closure of $\Gr^\l$ for $\l=(1,-1)$.  The strata will be
  $U = \Gr^\l$ and $Z = \Gr^0 \cong \pt$.  The lower stratum $Z$ is contractible and
  the larger stratum $U$ has the cohomology of $\PM^1$, and thus satisfies
  the parity conditions (note that in Proposition
  \ref{prop:contractible existence}, only $Z$ is assumed to be
  contractible, not $U$).  Consider $i^*j_* \uk_U$.  It is
  quasi-isomorphic to the graded cohomology of the link, which is in
  turn homotopy equivalent to $\RM\PM^3$.  Now
  $H^*(\RM\PM^3;\ZM_2)=\ZM_2[-3]\oplus \ZM/2[-2] \oplus \ZM_2$.

While there is a distinguished triangle
\[
\ZM_2[-2]\oplus \ZM_2 \to \ZM_2[-3]\oplus \ZM/2[-2] \oplus \ZM_2 \to \ZM_2[-3] \oplus \ZM_2[-1]\triright,
\]
where the first term is even and the last term is odd (in particular,
both are free over $\ZM_2$), by Proposition \ref{claim:both exist}
there is no such triangle in the other direction (as $H^*(\RM\PM^3;\ZM_2)$ is not a
direct sum of free modules).  We conclude that there exists a parity
sheaf for the $\natural=\Diamond$ pariversity, but for not the pariversity
$\dagger(\l)=\overline{0}$, $\dagger(0)=\overline{1}$.
\end{ex}

\begin{ex}
Let $k$ be a field and $X$ the Hirzebruch surface $\PM(\OC_{\PM^1}
\oplus \OC_{\PM^1}(-2))$ with two strata: the zero section $E$ and its
complement $U$.  Let $\dagger$ be the pariversity that is even on $U$
and odd on $E$.  Similarly to the previous example, one finds that
$\uk_U$ has $\dagger$-parity extension if and only if the
characteristic of $k$ is equal to $2$.
\end{ex}

\begin{ex}
Lastly, let $k$ be a field and $X=\PM^1$ viewed  as a
  $\CM^\times$-variety with the standard action ($z \cdot [x:y] =
  [x:zy]$ for $z \in \CM^\times$). We fix the stratification indexed
  by $\Lambda = \{ 0, \infty \}$, where $X_0=\{[1:0]\}$ is the north pole and $X_\infty$ its complement.

Let $D(X) = D^b_{{\CM^\times}, \Lambda}(\PM^1, k)$ denote the bounded
$\Lambda$-constructible equivariant derived category.  The equivariant constant sheaf on $X_\infty$ does not have a $\Diamond$-parity extension in the equivariant derived category $D(X)$.  Note that in the non-equivariant derived category, such a $\Diamond$-parity extension does exist.
\end{ex}

\excise{

\begin{ex} Along similar lines, we can let $k$ be a field, but instead
  consider strata with higher cohomology.  Let $X$ be as in the
  previous example and let $\pi : \widetilde{X} \to X$ denote the blow
  up of $X$ at $\Gr^0$. Then $\pi$ is a resolution of singularities,
  and $\widetilde{X}$ is isomorphic to the Hirzebruch surface
  $\PM(\OC_{\PM^1} \oplus \OC_{\PM^1}(-2))$. Let us stratify $\widetilde{X} = U \sqcup E$ where $U =
  \pi^{-1}(\Gr^{(1,1)})$ and $E = \pi^{-1}(\Gr^0) \cong \PM^1$, and let $i$ and
  $j$ denote the inclusions
\[
E \stackrel{i}{\hookrightarrow} \widetilde{X} \stackrel{j}{\hookleftarrow} U.
\]
We already know that $\EC^\natural(\widetilde{X}) =
\uk_{\widetilde{X}}[2]$ exists. Hence, by Proposition \ref{claim:both exist}, if we consider a pariversity
$\dagger$ given by $\dagger(U) = \overline{0}$ and $\dagger(E) = \overline{1}$ then 
$\EC^\dagger(\widetilde{X})$ exists if and only if the object $i^*j_*
\uk_{\widetilde{X}}$ is isomorphic to the sum of its cohomology sheaves.

Now $E$ has an open neighbourhood $V$ in $X$ such that the inclusion
$E \hookrightarrow V$ is homeomorphic to the inclusion $\PM^1
\hookrightarrow \OC _{\PM^1} (-2)$. It follows that we have isomorphisms
\[
\uk_E \cong \tau_{\le 0} i^*j_* \uk_{\widetilde{X}}  \to 
i^*j_* \uk_{\widetilde{X}}  \to
\uk_E[-1] \cong \tau_{\ge 1} i^*j_* \uk_{\widetilde{X}}  \triright
\]
and the extension $\uk_E[-1] \to \uk_E[1]$ is identified with the
Chern class of $\OC _{\PM^1} (-2)$ in $H^2(\PM^1)$. It follows that
$i^*j_* \uk_{\widetilde{X}}$ is isomorphic to the direct sum of its
cohomology sheaves if and only if $k$ is of characteristic 2. It
follows that it is only in characteristic 2 that $\uk_{U}$ has a
$\dagger$-parity extension. (The reader is invited to take the direct
image under $\pi$ of the $\dagger$-parity extension in characteristic 2 and
investigate the relation with the previous example!)
\end{ex}

\begin{ex} In this example we take $X = \PM^1$ which we view as a
  $\CM^\times$-variety with the standard action ($z \cdot [x:y] =
  [x:zy]$ for $z \in \CM^\times$). We fix the stratification indexed
  by $\Lambda = \{ 0, \infty \}$:
\[
X = \{ 0 \} \sqcup \AM^1 = X_0 \sqcup X_{\infty} = \{ [1:0] \} \sqcup \{
[\gamma: 1] \; | \; \gamma \in \CM \}.
\]
Let $D(X) = D^b_{{\CM^\times}, \Lambda}(\PM^1, k)$ denote the bounded
$\Lambda$-constructible equivariant derived category and $i$ (resp. $j$)
the inclusion of $X_0$ (resp. $X_\infty$) into $X$.

Because $X$ is smooth, $\uk[1]$ is self-dual and
hence $\natural$-parity. It follows that we have an isomorphism $\EC^\natural(\CM) \cong \uk[1]$.

However, we claim that there is no $\Diamond$-parity sheaf supported on
all of $X$. By Corollary \ref{cor-parext} we need to examine the
sheaf $i^*j_* \uk_{X_\infty} \in D^b_{\CM^\times}(\pt)$. Note
that
\[
i^*j_* \uk_{X_\infty}   \cong i^{*}j^{'}_*{\uk_{\CM^\times}},
\]
where $j'$ denotes the inclusions of $\CM^\times = X_\infty \setminus
\{ \infty \}$. If we take $E\CM^\times = \CM^\infty \setminus
\{ 0 \}$ and apply the Borel construction to 
$\{ 0 \} \stackrel{i}{\hookrightarrow} \CM \stackrel{j'}{\hookleftarrow} \CM^\times$
we obtain the diagram
\[
\PM^\infty \stackrel{\tilde{i}}{\hookrightarrow}
\OC_{\PM^{\infty}}(-1) \stackrel{\tilde{j}}{\hookleftarrow} \CM^\infty
\setminus \{ 0 \}.
\]
Hence we have an isomorphism
\[
i^* j_* \uk_{X_\infty} \cong i^{*}j^{'}_*{\uk_{\CM^\times}}\cong \tilde{i}^* \tilde{j}_* \uk_{\CM^{\infty}
  \setminus \{ 0 \} }.
\]
(Recall that $D^b_{\CM^\times}(\pt)$ is by definition the full
subcategory of $D^b(\PM^\infty)$ consisting of complexes with locally
constant (hence constant) cohomology sheaves.)
Hence the cohomology sheaves of $i^* j_* \uk_{X_\infty}$ are of rank 1
in degrees 0 and 1, and zero otherwise. Now in the truncation
triangle
\[
\uk_{\PM^{\infty}} \to i^* j_* \uk_{X_\infty} \to
\uk_{\PM^{\infty}}[-1] \triright
\]
the extension $\uk_{\PM^{\infty}}[-1] \to \uk_{\PM^{\infty}}[1]$ is
the Chern class of the universal bundle $\CM^{\infty} \setminus \{ 0
\} \to \PM^{\infty}$ and in particular is $\ne 0$.\footnote{We remark
  that the existence of this truncation triangle 
gives another construction of $\EC^\natural(\CM)$, by Corollary \ref{cor-parext} .}
It follows that
$i^* j_* \uk_{X_\infty}$ is not isomorphic to the sum of its
cohomology sheaves, and hence no $\Diamond$-parity sheaf exists
extending $\uk_{X_\infty}$.

Let us remark that if we work instead with the (non-equivariant) derived
category $D^b_{\Lambda}(X,k)$ then we have already seen in
Corollary \ref{cor:contractible existence}  that the sheaf
$\uk_{X_\infty}$ does have a $\Diamond$-extension, which is an example
of a tilting perverse sheaf. The fact that this tilting sheaf does not
admit an equivariant lift reflects non-trivial facts in
representation theory. It also reminds us that the
image of the forgetful functor from the equivariant derived category
is not closed under extensions.
\end{ex}

}

\subsection{Even resolutions and existence} \label{subsec-even}

In this section we keep the notation and assumptions of Section \ref{subsec-naa}.

In Theorem \ref{indecs}, we saw that, if we fix a stratum $X_\lambda$ and a local system
$\LC$ on it, then there is at most one parity sheaf $\FC$ such that
$\supp\FC =\ov X_\lambda$ and $i_\lambda^*\FC \simeq \LC[d_\lambda]$,
up to isomorphism.  Furthermore, in Corollary \ref{cor:contractible
  existence} we showed that, for an arbitrary pariversity $\dagger$,
(in the non-equivariant set-up) if the strata are contractible and $k$
is a field, then $\dagger$-parity sheaves exist extending any local
system.  Unfortunately, the conditions of Corollary
\ref{cor:contractible existence} are quite restrictive. 

In this section, we will describe a very useful tool for constructing
$\natural$-parity sheaves that are extensions of the trivial local
system.  Later, in Section \ref{sec-examples}, we will use this tool
to provide large classes of examples.

Let us recall the following definition from \cite[1.6]{GM-SMT}.

\begin{defi} \label{def-stratified}
Let $X = \sqcup_{\lambda\in\Lambda_X} X_\lambda$
and $Y = \sqcup_{\mu\in\Lambda_Y} Y_\mu$ be stratified varieties.
A morphism $\pi : X \to Y$ is {\bf stratified} if
\begin{enumerate}
 \item 
for all $\mu\in\Lambda_Y$, the inverse image
$\pi^{-1}(Y_\mu)$ is a union of strata;

\item
for each $X_\lambda$ above $Y_\mu$, the induced morphism
$\pi_{\lambda,\mu} : X_\lambda \to Y_\mu$ is a submersion with smooth fibre
$F_{\lambda,\mu} = \pi_{\lambda,\mu}^{-1}(y_\mu)$,
where $y_\mu$ is some chosen base point in $Y_\mu$.
\end{enumerate}
\end{defi}

\begin{defi} \label{def-even}
A stratified morphism $\pi$ is said to be {\bf even} if for all
$\lambda$, $\mu$ as above, 
and for any local system $\LC$ in $\Loc(X_\lambda)$, the cohomology of the
fibre $F_{\lambda,\mu}$ with coefficients in $\LC_{|F_{\lambda,\mu}}$ is torsion
free and concentrated in even degrees.
\end{defi}

A class of even morphisms, which are common in geometric
representation theory and a motivation for the definition, are those
whose stratifications induce ``affine pavings'' on the fibres --- meaning
that all of the $F_{\lambda,\mu}$ are affine spaces. Examples of such
maps arise in the study of flag manifolds and will be discussed in
Section~\ref{sec-examples}.

\begin{prop}
\label{prop-evenpush}
The direct image of a $(\natural,?)$-even (resp. odd) complex 
under a proper, even morphism is again
$(\natural,?)$-even (resp. odd). The direct image 
of a $\natural$-parity complex under such a map
is $\natural$-parity.
\end{prop}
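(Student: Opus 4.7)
The plan is to reduce to the $*$-even case and then compute the stalks of $\pi_*\FC$ fibrewise via proper base change, using a devissage along the stratification of each fibre. A shift converts odd to even, so I focus on the even cases. Since $\pi$ is proper, $\DM$ commutes with $\pi_*$; and since Verdier duality on a smooth complex stratum shifts by an even integer, it exchanges $*$-even with $!$-even complexes. Thus the $!$-even statement follows from the $*$-even one applied to $\DM\FC$, and I may assume throughout that $\FC$ is $*$-even.

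Fix a stratum $Y_\mu \subset Y$. Because $\pi$ is a proper stratified morphism, the restriction $\pi^{-1}(Y_\mu) \to Y_\mu$ is a locally trivial (stratified) fibration by Thom's first isotopy lemma, so $(\pi_*\FC)|_{Y_\mu}$ has locally constant cohomology sheaves. It then suffices to show, for any $y \in Y_\mu$, that
\[
i_y^*\pi_*\FC \;\cong\; R\Gamma(\pi^{-1}(y),\,\FC|_{\pi^{-1}(y)})
\]
(by proper base change) is a free $k$-module concentrated in even degrees. The compact fibre $\pi^{-1}(y)$ is stratified by the smooth locally closed pieces $F_{\lambda,\mu} = \pi_{\lambda,\mu}^{-1}(y)$, indexed by the strata $X_\lambda$ mapping to $Y_\mu$. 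Since $\FC$ is $*$-even, $\FC|_{F_{\lambda,\mu}}$ is a direct sum of even shifts of local systems, and the evenness of $\pi$ ensures that $R\Gamma(F_{\lambda,\mu},\FC|_{F_{\lambda,\mu}})$ is free and concentrated in even degrees. As $F_{\lambda,\mu}$ is a complex manifold of even real dimension, Poincar\'e duality yields the analogous statement for $R\Gamma_c(F_{\lambda,\mu},\FC|_{F_{\lambda,\mu}})$.

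Next comes the devissage. The axiom of frontier for the algebraic stratification permits an ordering of the finitely many strata of $\pi^{-1}(y)$ in which each is closed in the union of itself and the previously chosen strata; this produces a filtration by open subsets $\emptyset = U_0 \subset U_1 \subset \cdots \subset U_N = \pi^{-1}(y)$ whose successive differences $U_i \setminus U_{i-1}$ are single strata. The open-closed distinguished triangles
\[
R\Gamma_c(U_{i-1},\FC) \to R\Gamma_c(U_i,\FC) \to R\Gamma_c(U_i \setminus U_{i-1},\FC) \triright
\]
allow an induction on $i$: the outer terms are free and in even degrees by the inductive hypothesis and the previous paragraph, and the connecting morphism (of degree one) vanishes for parity reasons, so the middle term inherits both properties. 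At $i=N$, compactness of $\pi^{-1}(y)$ gives $R\Gamma_c(\pi^{-1}(y),\FC) = R\Gamma(\pi^{-1}(y),\FC)$, completing the stalk computation.

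The delicate point is that the evenness hypothesis on $\pi$ is formulated in terms of ordinary cohomology of the fibres, whereas the devissage naturally produces compactly supported cohomology. Invoking Poincar\'e duality on the complex manifolds $F_{\lambda,\mu}$ (whose real dimension is even) is exactly what aligns the two, keeping the parity bookkeeping consistent at every step of the induction.
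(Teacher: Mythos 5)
Your proof is correct, and it is the Verdier dual of the paper's argument. You reduce to the $*$-even case, compute the stalk $i_y^*\pi_*\FC$ as $R\Gamma(\pi^{-1}(y),\FC|_{\pi^{-1}(y)})$ by proper base change, filter the fibre by an increasing chain of open sets, and run the devissage in compactly supported cohomology; the evenness hypothesis on $\pi$, which is stated for ordinary cohomology of the pieces $F_{\lambda,\mu}$, is then imported via Poincar\'e duality on those smooth, even-real-dimensional manifolds (together with the fact that $\Loc(X_\lambda)$ is closed under $\LC\mapsto\LC^\vee$, so the hypothesis also applies to $\LC^\vee$). The paper instead reduces to the $!$-even case, computes the costalk $i^!\pi_*\PC$ as $\HM^\bullet(F_\mu,i^!\PC)$, filters the fibre by closed subsets, and uses the local-global spectral sequence; the $E_1$ terms are identified as ordinary cohomology of local systems on $F_{\lambda,\mu}$ directly by the Gysin shift $i_p^!\PC[2\codim_{X_{\lambda_p}}F_{\lambda_p,\mu}] \cong (i_{\lambda_p}^!\PC)|_{F_{\lambda_p,\mu}}$ for the smooth closed embedding $F_{\lambda_p,\mu}\hookrightarrow X_{\lambda_p}$, which feeds straight into the evenness hypothesis without passing through compactly supported cohomology. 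The two devissages and the parity degeneration arguments are identical in substance; what the paper's choice of $!$ over $*$ buys is that it avoids the $R\Gamma$ versus $R\Gamma_c$ comparison, since the Gysin shift does the work you assign to Poincar\'e duality.
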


\begin{proof}
In this proof, all parity vanishing is with respect to the $\natural$
-pariversity and so we will drop it from the notation. We use the
notation of Definitions \ref{def-stratified} and \ref{def-even}, and
assume that our stratified even morphism $\pi$ is proper.

Note that if the statement is true for all $!$-even complexes,
then it is true for all $!$-odd complexes (by shifting). It would then
also be true for any $*$-parity complex because $\FC$ is $*$-even
(resp. odd) if and only if $\DM\FC$ is $!$-even (resp. odd) by
Remark~\ref{rmk-parity}
and  
\[\pi_* \DM\FC \cong \DM \pi_! \FC = \DM \pi_* \FC\]
as $\pi$ is proper.

The second sentence of the theorem is an immediate corollary of the first.
Thus it suffices to show the first statement for $!$-even sheaves.

Fix $\mu \in \Lambda_Y$ and let $\Lambda_X(\mu)$ denote the indices of strata in $\pi^{-1}(Y_{\mu})$. 
Thus we have $\pi^{-1}(Y_\mu) = \bigsqcup_{\lambda \in \Lambda_X(\mu)} X_{\lambda}$.
Moreover, let $F_\mu = \pi^{-1}(y_{\mu}) = \bigsqcup_{\lambda \in \Lambda_X(\mu)} F_{\lambda,
  \mu}$ where, as above, $F_{\lambda, \mu} = F_\mu \cap X_\lambda$.
We have the following diagram with Cartesian squares:
\[
\xymatrix{
F_\mu \ar[r] \ar[d]^{\pi} \ar@/^.6cm/[rr]^i
& \pi^{-1}(Y_\mu) \ar[r] \ar[d]^{\pi}
& X \ar[d]^{\pi}
\\
\{ y_{\mu} \} \ar[r] \ar@/_.6cm/[rr]_i
& Y_{\mu} \ar[r]
& Y
}
\]
We abuse notation and denote by $i$ both inclusions
$F_\mu \hookrightarrow X$ and $\{ y_{\mu} \}  \hookrightarrow
Y$. Similarly, $\pi$ denotes any vertical arrow in the above diagram.

Let $\PC \in D(X)$ be a $!$-even complex. We wish to show that
$\pi_* \PC$ is $!$-parity. This is equivalent to $i^! \pi_* \PC$ being
$!$-parity for all $\mu$.
By the proper base change theorem,
\[
i^! \pi_* \PC \cong \pi_* i^! \PC \cong \HM^{\bullet}(F_\mu, i^! \PC). \]
We will use the local-global spectral sequence to show that this
latter cohomology group is parity. 

Choose a filtration $F_0 \supset F_1 \supset F_2 \supset \dots \supset
F_r = \emptyset$ of the 
fibre $\pi^{-1}(y_{\mu})$ by closed subsets such that, for all $p$,
\[
F_p \setminus F_{p+1} = F_{\lambda_p, \mu} \qquad \text{for some } \lambda_p \in \Lambda_X(\mu).
\]
For all $p$, let $i_p : F_p \setminus F_{p+1} = F_{\lambda_p, \mu}
\hookrightarrow X$ denote the inclusion.
The local-global spectral sequence (see for example the proof of Proposition 3.4.4 in
\cite{Soergel-Langlands}) has the form
\[
E_1^{p,q} = \HM^{p+q}(F_{\lambda_p,\mu}, i_p^! \PC) \Rightarrow \HM^{p+q}(F_{\mu}, i^! \PC)
\]
We may express $i_p$ as the composition
\[
F_{\lambda_p, \mu} \hookrightarrow X_{\lambda_p} 
\stackrel{i_{\lambda_p}}{\hookrightarrow} X \]
where $F_{\lambda_p, \mu}$ is a smooth subvariety of $X_{\lambda_p}$. It follows that, if $d$ is the
(complex) codimension of $F_{\lambda_p, \mu}$ in $X_{\lambda_p}$, we have
\[
i_p^! \PC [2d] \cong (i_{\lambda_p}^! \PC)_{| F_{\lambda_p, \mu}}.
\]
(The isomorphism $i_p^![2d] \FC \cong i_p^* \FC$
is valid for any complex of sheaves on $X_{\l_p}$ whose
cohomology sheaves are local systems, as follows easily from the
smoothness of $F_{\lambda_p, \mu}$ and $X_{\l_p}$.)
As $\PC$ is $!$-even by assumption, $i_p^! \PC$ is isomorphic to a direct sum of local systems
in even degrees, all obtained by restriction from torsion free local systems on $X_{\lambda_p}$.

By assumption, the 
cohomology of $F_{\lambda_p, \mu}$ with values in such local systems is free
and concentrated in even degree and so the above spectral sequence
degenerates for parity reasons, whence the claim.
\end{proof}

One practical application of the previous result is that the existence of
parity sheaves follows from the existence of even resolutions:

\begin{cor}
Suppose that there exists an even, proper morphism
\[
\pi : \widetilde{X_{\lambda}} \to \ov X_{\lambda} \subset X
\]
which is an isomorphism over $X_{\lambda}$.

Assume there exists a $\natural$-parity complex $\widetilde{\PC}$ on
$\widetilde{X_{\lambda}}$ whose restriction to
$\pi^{-1}(X_\lambda)\cong X_\lambda$ is isomorphic to a shifted
indecomposable local system $\LC[d_\lambda]$ on $X_\lambda$. Then
there exists a $\natural$-parity sheaf $\PC$ on $X$ satisfying
\begin{enumerate} 
\item $\supp \PC = \ov X_{\lambda}$;
\item $\PC_{| X_{\lambda}} = \LC[d_\lambda]$.
\end{enumerate}

In particular, if $\pi$ is a resolution of singularities, then
the above holds for $\LC = \uk_\lambda$, since
in this case $\uk_{\widetilde{X_{\lambda}}}[d_\lambda]$ is parity.
\end{cor}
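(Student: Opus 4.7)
The plan is to produce $\PC$ as an indecomposable summand of the pushforward $\pi_* \QC$, where $\QC$ denotes the hypothesised parity sheaf on $\widetilde{X_\lambda}$ extending $\LC[d_\lambda]$. Since $\pi$ is proper, even and stratified, Proposition~\ref{prop-evenpush} shows that $\pi_* \QC$ is a parity complex on $\ov X_\lambda$; properness also forces its support to lie in $\ov X_\lambda$, so extending by zero gives a parity complex on $X$ which I continue to denote $\pi_* \QC$.

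Using the Krull--Schmidt property of $D(X)$ recalled in Subsection~\ref{subsec-naa}, one decomposes
\[
\pi_* \QC \cong \bigoplus_{i \in I} \PC_i
\]
into finitely many indecomposable parity complexes. By Theorem~\ref{indecs}(1), each $\PC_i$ has irreducible support closure $\ov X_{\mu_i}$ contained in $\ov X_\lambda$. To select the right summand, I would restrict to the open stratum: since $\pi$ is an isomorphism over $X_\lambda$,
\[
\bigoplus_{i \in I} (\PC_i)_{|X_\lambda} \cong (\pi_* \QC)_{|X_\lambda} \cong \QC_{|\pi^{-1}(X_\lambda)} \cong \LC[d_\lambda],
\]
while any $\PC_i$ with $\mu_i \ne \lambda$ restricts to zero on $X_\lambda$. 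Because $\LC \ne 0$, at least one summand must have $\mu_i = \lambda$; by Theorem~\ref{indecs}(2) such a $\PC_i$ restricts to $\LC_i[d_\lambda]$ for some indecomposable local system $\LC_i$. Assuming $\LC$ is indecomposable (the intended setting of the corollary), Krull--Schmidt in the category of shifted local systems forces exactly one such summand to exist, with $\LC_i \cong \LC$; this summand is the desired parity sheaf $\PC$. If $\LC$ is not indecomposable, one applies this to each indecomposable summand of $\LC$ and takes the direct sum, recovering a parity sheaf in the extended sense of the footnote following the definition.

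For the ``in particular'' statement, when $\pi$ is a resolution of singularities the variety $\widetilde{X_\lambda}$ is smooth of complex dimension $d_\lambda$, so $\uk_{\widetilde{X_\lambda}}[d_\lambda]$ is visibly an indecomposable parity sheaf on $\widetilde{X_\lambda}$ (its stalks and costalks coincide up to a shift by $2d_\lambda$) whose restriction to $\pi^{-1}(X_\lambda) \cong X_\lambda$ is $\uk_\lambda[d_\lambda]$; the main part of the corollary then applies with $\LC = \uk_\lambda$. The main subtlety to handle is ensuring that at least one summand $\PC_i$ has $X_\lambda$ open in its support rather than being confined to the boundary $\ov X_\lambda \setminus X_\lambda$; this is precisely what the restriction computation above guarantees, since the sum of ``top'' restrictions must equal the nonzero shifted local system $\LC[d_\lambda]$.
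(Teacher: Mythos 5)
Your proof is correct and is exactly the argument the paper intends: the paper's proof consists of the single sentence ``This result follows from the previous proposition,'' and your write-up simply supplies the details that the authors left implicit (push forward $\QC$, invoke Proposition~\ref{prop-evenpush} to get a parity complex supported in $\ov X_\lambda$, decompose via Krull--Schmidt, use proper base change over the open stratum $X_\lambda$ to identify the summand with $X_\lambda$ open in its support and restriction $\LC[d_\lambda]$). The remarks about the indecomposable versus general case of $\LC$ and about the ``in particular'' statement for resolutions are likewise the intended reading.
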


\begin{proof} By the previous proposition the direct image $\pi_*
  \widetilde{\PC}$ is a parity complex. By proper base change we 
  have that $(\pi_* \widetilde{\PC})_{|X_\l} = \LC[d_\l]$ and $\supp
  \pi_* \widetilde{\PC}= \ov{X_\l}$. Now decompose $\pi_*
  \widetilde{\PC}$ into indecomposable objects, and let $\PC$ denote the
  (necessarily unique) direct summand whose restriction to $X_\l$ is
  non-zero. Then
  $\PC$ is a parity sheaf with $\PC_{|X_\l} \cong \LC[d_\l]$ and
  $\supp \PC = \ov{X_\l}$ as claimed.
 \end{proof}

\subsection{Modular reduction of parity sheaves}
\label{subsec:modular}

Let $k\to k'$ be a ring homomorphism. In this section, we will
consider the behaviour of parity sheaves under the extension of
scalars functor, which we denote by
\[ k'(-):= k' \stackrel{L}{\otimes}_k - : D(X;k) \to D(X;k') \]

\begin{lem} \label{lem:parity reduc}
Suppose that $\FC\in D(X,k)$ is $?$-even (resp. odd), then $k'(\FC)$
is $?$-even (resp. odd). In particular, if $\FC$ is a
parity complex, then so is $k'(\FC$).
\end{lem}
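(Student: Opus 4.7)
The plan is to reduce the statement to two compatibilities between the extension of scalars functor $k'(-) = k' \stackrel{L}{\otimes}_k -$ and the stratum-wise restrictions $i_\lambda^?$ for $? \in \{*, !\}$. Specifically, I would establish natural isomorphisms
\[
i_\lambda^? \, k'(\FC) \cong k'\bigl( i_\lambda^? \FC \bigr) \qquad (? \in \{*,!\}).
\]
Granting these, the lemma follows from Remark~\ref{rmk-parity}(2): if $i_\lambda^? \FC$ is a direct sum of even (resp.\ odd) shifts of local systems $\LC_j$ of free $k$-modules, then the right-hand side above is the corresponding direct sum of even (resp.\ odd) shifts of the $k'(\LC_j)$, and each $k'(\LC_j)$ is again a local system of free $k'$-modules concentrated in degree zero, because $k' \otimes_k -$ preserves direct sums, shifts, and freeness.

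For the $*$-compatibility, I would identify $k'(-)$ with $\underline{k'} \stackrel{L}{\otimes}_{\underline{k}} -$, where $\underline{k'}$ denotes the constant sheaf. Since the topological inverse image $i_\lambda^*$ is symmetric monoidal, it commutes with tensoring against any fixed complex, in particular against $\underline{k'}$. The $!$-compatibility is the only non-formal ingredient and hence the main obstacle. The cleanest route is via the projection formula for the (locally closed) inclusion $i_\lambda$: for any constructible complex $\AC$ on $X$,
\[
i_\lambda^! \bigl( \AC \stackrel{L}{\otimes}_{\underline{k}} \FC \bigr) \cong i_\lambda^* \AC \stackrel{L}{\otimes}_{\underline{k}} i_\lambda^! \FC,
\]
which applied to $\AC = \underline{k'}$ yields the desired isomorphism, since $i_\lambda^* \underline{k'}$ is the constant sheaf with value $k'$ on $X_\lambda$. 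In the equivariant setting the same argument goes through inside the equivariant six-functor formalism.

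The ``in particular'' clause is then immediate from the definition of a parity complex as one which is either even or odd. The only subtlety worth checking is that the projection formula is available in the full generality we need (ordinary and equivariant constructible derived categories with coefficients in a complete local principal ideal domain); this is standard, after which the whole argument is a formal manipulation of direct sums and shifts.
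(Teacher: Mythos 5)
Your overall strategy matches the paper's: establish that $k'(-)$ commutes with $i^?$ and then observe that freeness of the cohomology of $i^?\FC$ lets you conclude. (The paper phrases this at the level of restriction to points and simply cites Kashiwara--Schapira for the commutation; you work at the level of strata and supply the commutation yourself.) The $*$-direction via monoidality of $i_\lambda^*$ is fine.

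However, the $!$-direction as you state it contains a genuine error: the isomorphism
\[
i_\lambda^! \bigl( \AC \stackrel{L}{\otimes}_{\underline{k}} \FC \bigr) \cong i_\lambda^* \AC \stackrel{L}{\otimes}_{\underline{k}} i_\lambda^! \FC
\]
does \emph{not} hold ``for any constructible complex $\AC$ on $X$''. For a counterexample take $X = \CM$ stratified by $\{0\}$ and $U = \CM^*$, with $i$ the inclusion of the origin and $j$ the open inclusion; set $\AC = j_! \uk_U$ and $\FC = \uk_X$. Then $i^*\AC = 0$ so the right-hand side vanishes, while $i^! j_! \uk_U \cong k[-1] \oplus k[-2]$ is nonzero, so the left-hand side does not. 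The correct hypothesis for this shriek-pullback projection formula is that $\AC$ be locally constant with perfect (dualizable) stalks. Your application is to $\AC = \underline{k'}$, which is constant with value $k'$, and $k'$ \emph{is} a perfect $k$-complex since $k$ is a complete local PID; so the formula you need is true, and the proof goes through once the hypothesis on $\AC$ is stated correctly. You should replace ``for any constructible complex $\AC$'' by ``for $\AC$ constant (or locally constant with perfect stalks)'', and ideally record why $\underline{k'}$ qualifies.
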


\begin{proof}
It suffices to prove the $?$-even case. It is equivalent
to show that the $?$-restriction of $k'(\FC)$ to each point is
even. For any complex $\FC \in D(X; k)$ we have isomorphisms
$ i^? (k'(\FC)) \cong k'(i^? \FC) $
for $i$ the inclusion of a point (this follows, for example, from
Propositions 2.3.5 and 2.5.13 of \cite{KS1}). 
By definition, if $\FC$ is $?$-even then the cohomology of
$i^? \FC$ vanishes in odd degrees and is free and therefore
flat.  Thus 
$ k' (i_{\lambda}^?\FC) = k' \otimes i_\lambda^?\FC $
 and $k'(\FC)$ is $?$-even.
\end{proof}

We will now restrict our attention to the case when $k=\OM$ and $k'=\FM$
and $k\to k'$ is the residue map. Recall that $\OM$ denotes a complete
discrete valuation ring and $\FM$ its residue field. We assume that 
\eqref{assump-parity} and \eqref{assump-parity2} holds for $D(X, \OM)$.

In this case, $k'(-)$ is the modular reduction functor:
\begin{align*}
\FM(-) := \FM \stackrel{L}{\otimes}_\OM - & : D(X,\OM) \to D(X, \FM)
\end{align*}

First we claim that in this situation, the implication of the previous
theorem is in fact an equivalence.

\begin{prop}
\label{prop-reduction}
A complex $\FC\in D(X;\OM)$ is $?$-even (resp, $?$-odd or
 parity) if and only if $\FM\FC$ is.
\end{prop}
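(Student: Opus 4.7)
The plan is to establish the converse implication, since the forward direction is precisely the preceding lemma. The argument is pointwise, using constructibility to pass back to stratum-wise statements.

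Fix a stratum $X_\lambda$ and a point $x\in X_\lambda$. The same base change isomorphism used in the proof of the preceding lemma gives
\[
i_x^?(\FM \FC) \;\cong\; \FM \stackrel{L}{\otimes}_\OM i_x^? \FC.
\]
Setting $M^\bullet := i_x^? \FC$, a bounded complex of finitely generated $\OM$-modules (by constructibility of $\FC$ over the Noetherian ring $\OM$), the assertion reduces to the following algebraic statement: if $H^n(\FM \stackrel{L}{\otimes}_\OM M^\bullet) = 0$ for odd $n$, then $H^n(M^\bullet) = 0$ for odd $n$ and $H^n(M^\bullet)$ is a free $\OM$-module for even $n$.

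The key tool is the universal coefficient short exact sequence, which holds here because $\OM$ has global dimension one:
\[
0 \to H^n(M^\bullet) \otimes_\OM \FM \to H^n(\FM \stackrel{L}{\otimes}_\OM M^\bullet) \to \Tor_1^\OM(H^{n+1}(M^\bullet),\FM) \to 0.
\]
Vanishing of the middle term for odd $n$ gives on the left $H^n(M^\bullet) \otimes_\OM \FM = 0$, whence $H^n(M^\bullet) = 0$ by Nakayama's lemma (valid since $\OM$ is local and $H^n(M^\bullet)$ is finitely generated). On the right it gives $\Tor_1^\OM(H^{n+1}(M^\bullet), \FM) = 0$ for even $n+1$; over the DVR $\OM$ this forces $H^{n+1}(M^\bullet)$ to be torsion-free, and hence free. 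Combined with constructibility (so that $\HC^n(i_\lambda^?\FC)$ is locally constant on $X_\lambda$), the pointwise conclusion that stalks are free and concentrated in even degree implies that $i_\lambda^?\FC$ is a direct sum of even shifts of objects of $\Loc(X_\lambda)$, i.e.\ that $\FC$ is $?$-even.

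The $?$-odd case is obtained by shifting, and the parity case follows from these two, since ``parity'' means ``even or odd'' and the pointwise criterion applies uniformly across strata. The one delicate point in the plan is justifying the universal coefficient sequence at the derived-category level; this is standard once one notes that $\OM$ has global dimension one, so the hyperTor spectral sequence for $\FM \stackrel{L}{\otimes}_\OM M^\bullet$ collapses onto the displayed short exact sequence, and any bounded complex of finitely generated $\OM$-modules admits a bounded flat resolution, so no convergence issues intervene.
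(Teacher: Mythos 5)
Your proof is correct and follows the same overall structure as the paper's: reduce to a pointwise statement via the base-change isomorphism $i^?(\FM\FC)\cong\FM(i^?\FC)$, then exploit that $\OM$ is a DVR of global dimension one. The only difference is technical packaging: where you invoke the universal coefficient short exact sequence and Nakayama to kill odd cohomology and torsion, the paper instead uses that any object of $D(\Mod_\OM)$ over the hereditary ring $\OM$ is formal (isomorphic to its cohomology), so a torsion summand in $H^n(i^?\FC)$ would force $\FM(i^?\FC)$ to have nonzero cohomology in two consecutive degrees; these two arguments encode the same algebra.
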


\begin{remark}
This proposition is analogous to \cite[Prop. 42(a)]{Serre}, which states,
for a finite group $G$, an $\OM[G]$-module which is free as an
$\OM$-module is projective if and only if
its reduction is a projective $\FM[G]$-module.
\end{remark}

\begin{proof}
Having proved ``only if'' it remains to prove ``if''.

Again, it suffices to check the $?$-restrictions to points. As before, we have 
$ i^? (\FM\FC) \cong \FM(i^?\FC).$
This time, we wish to show that if $i^? (\FM\FC)$ vanishes in odd
degrees, then $i^? \FC$ does too and is a free $\OM$-module. 

The derived category over a point $D(\pt;\OM)$ (resp. $D(\pt;\FM)$) is
equivalent to the bounded derived category of finitely generated $\OM$-modules,
$D(\Mod_\OM)$ (resp. finite dimensional $\FM$-vector spaces, $D(\Vect_\FM)$).  The ring $\OM$ is hereditary,
which implies that any object in $D(\Mod_\OM)$ is 
isomorphic to its cohomology.
Using this it is easy to see that if the cohomology of $i^?\FC$ had torsion, then $\FM
i^?\FC$ would have non-trivial cohomology concentrated in two consecutive degrees.
Hence $i^?\FC$ is a free $\OM$-module and is even.
\end{proof}

\begin{prop}
If $\EC\in D(X,\OM)$ is a parity sheaf, then $\FM\EC$
is also a parity sheaf.  In other words, for any $\LC
\in \Loc(X_\lambda,\OM)$, we have
\[\FM\EC(\lambda,\LC)\cong \EC(\lambda,\FM\LC).\]
\end{prop}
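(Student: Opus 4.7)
The plan is to verify that $\FC := \FM(\EC(\lambda,\LC))$ satisfies the defining properties of $\EC(\lambda,\FM\LC)$, so that the uniqueness part of Theorem~\ref{indecs} concludes. By additivity of $\FM(-)$ and the footnote convention extending $\EC(\lambda,-)$ to decomposable local systems, I may assume $\LC$ is indecomposable. The preceding lemma gives that $\FC$ is a parity complex; the stalks of $\EC(\lambda,\LC)$ are free $\OM$-modules, so they remain nonzero after tensoring with $\FM$, giving $\supp \FC = \ov X_\lambda$; and base change yields $i_\lambda^*\FC \cong \FM\LC[d_\lambda]$.

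The key step is indecomposability of $\FC$, which I would deduce from the ring isomorphism
\[
\End_{D(X,\FM)}(\FC) \;\cong\; \FM\otimes_\OM \End_{D(X,\OM)}(\EC(\lambda,\LC)).
\]
To establish it, Proposition~\ref{prop-parityhom} combined with assumption~\eqref{assump-parity2} shows that $\Hom^\bullet(\EC,\EC)$ is free over $\OM$ in every degree. Applying $\Hom(\EC,-)$ to the triangle $\EC\xrightarrow{\pi}\EC\to\FC\triright$, obtained by derived-tensoring $\EC$ with the short exact sequence $0\to\OM\xrightarrow{\pi}\OM\to\FM\to 0$ for $\pi$ a uniformiser, yields $\Hom^n(\EC,\FC)\cong \FM\otimes_\OM\Hom^n(\EC,\EC)$, since $\pi$ acts injectively on free $\OM$-modules. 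The adjunction between $\FM\otimes^L_\OM -$ and restriction of scalars then identifies the degree zero part of the left-hand side with $\End_{D(X,\FM)}(\FC)$.

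Since $\EC(\lambda,\LC)$ is indecomposable, $\End(\EC)$ is a local $\OM$-algebra by Krull--Schmidt. The quotient $\End(\EC)/\pi\End(\EC)$ is nonzero (as $\FC\neq 0$), so $\pi$ lies in the maximal ideal of $\End(\EC)$; the quotient is therefore local, and so is $\End(\FC)$. Hence $\FC$ is indecomposable, and Theorem~\ref{indecs} forces both that $\FM\LC$ is indecomposable and that $\FC \cong \EC(\lambda,\FM\LC)$. The subtle ingredient is the universal-coefficient-style identification of endomorphism rings, which works precisely because assumption~\eqref{assump-parity2} ensures the requisite freeness.
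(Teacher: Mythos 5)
Your proof is correct, but the route you take to the key endomorphism-ring identification differs from the paper's. The paper first proves, for general $*$-parity $\FC$ and $!$-parity $\GC$, that the natural map $\FM\otimes_\OM\Hom(\FC,\GC)\to\Hom(\FM\FC,\FM\GC)$ is an isomorphism: it applies Proposition~\ref{prop-parityhom} with \emph{both} coefficient rings $\OM$ and $\FM$ to decompose each side as a sum over strata, and then compares the local pieces using the isomorphism~\eqref{eq-modred} for local systems. You instead derive the special case $\FC=\GC=\EC$ from a universal-coefficient argument: you invoke Proposition~\ref{prop-parityhom} only for $\OM$-coefficients (to obtain freeness of $\Hom^\bullet(\EC,\EC)$), run the long exact sequence for the triangle $\EC\xrightarrow{\pi}\EC\to\FM\EC\triright$, and then pass from $\Hom_{D(X,\OM)}(\EC,\FM\EC)$ to $\End_{D(X,\FM)}(\FM\EC)$ via the extension/restriction-of-scalars adjunction. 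This is a legitimate alternative that avoids any reference to~\eqref{eq-modred}, at the cost of needing the adjunction step which the paper does not make explicit. The final deduction of indecomposability is also phrased differently — the paper produces a surjection $\End(\EC)\twoheadrightarrow\End(\FM\EC)$ and lifts idempotents via Proposition~\ref{prop-feit}, whereas you argue directly that $\End(\FM\EC)$ is a nonzero quotient of the local ring $\End(\EC)$ by an ideal contained in its maximal ideal and hence local — but these two arguments are essentially equivalent. Your observation that Theorem~\ref{indecs} then automatically forces $\FM\LC$ to be indecomposable is a correct and useful remark that the paper leaves implicit.
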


\begin{proof} For local systems $\LC, \LC^{\prime}\in \Loc(X_\lambda,\OM)$ on $X_{\lambda}$,
we have
\begin{equation}\label{eq-modred}
 \FM \otimes \Hom(\LC, \LC^{\prime}) \cong \Hom(\FM \LC, \FM
 \LC^{\prime}) 
\end{equation}
Now consider $\FC$ (resp. $\GC$) in $D(X, \OM)$
which is $*$- (resp. $!$-) parity. Then using the proof of Proposition
\ref{prop-parityhom} and ~\eqref{eq-modred} above 
for $\LC=i^*_\lambda
\FC,\LC'=i^!_\lambda \GC$,
we see that the natural morphism yields an isomorphism:
\begin{equation}
\label{eq:mod hom parity}
\FM \otimes \Hom(\FC, \GC) \stackrel{\sim}{\to} \Hom(\FM \FC,
\FM \GC).
\end{equation}
Finally, let $\FC=\GC=\EC(\l, \LC) \in D(X;\OM)$ 
be a parity
sheaf. It follows that we have a surjection
\[
\End(\EC(\l, \LC)) \twoheadrightarrow \End(\FM \EC(\l, \LC)).
\]
It follows by Lemma \ref{lem-quotient local} that $\End(\FM \EC(\l, \LC))$
is a local ring, and hence $\FM \EC(\l, \LC)$ is indecomposable. We
also know that $\FM \EC(\l, \LC)$ is a parity complex by Lemma \ref{lem:parity reduc}. Hence
we have an isomorphism $\EC(\l, \FM \LC) \cong \FM \EC(\l, \LC)$ by
Theorem \ref{indecs}.
\end{proof}

\begin{remark}
This is a partial analogue to \cite[Prop. 4.2(b)]{Serre}, which states
that for each projective $\FM[G]$-module $F$ there exists a unique (up to
isomorphism) projective $\OM[G]$-module whose reduction is isomorphic
to $F$.
\end{remark}

\begin{prop} \label{prop:almostall}
If $\LC \in \Loc(X_\l,\ZM)$ is a local system on a stratum of $X$ such that $\ic(\l,\QM\LC) \cong \EC(\l,\QM\LC)$ is a parity sheaf, then $\ic(\l,\FM_p \LC) \cong \EC(\l,\FM_p \LC)$ for all but finitely many primes $p$.
\end{prop}

\begin{proof}
Suppose there is no $p$-torsion in the cohomology of the stalks and costalks of $\ic(\l,\LC)$ for some prime $p$.  Then the graded dimensions of the stalks and costalks of $\QM\ic(\l,\LC) ( \cong \ic(\l,\QM\LC) )$ and $\FM_p \ic(\l,\LC)$ coincide.  It follows that $\FM_p \ic(\l,\LC)$ is isomorphic to $\ic(\l,\FM_p\LC)$ and, by the parity assumption on $\ic(\l,\QM\LC)$, is a parity sheaf.

It remains to show that the cohomology groups of the stalks and
costalks of $\ic(\l,\LC)$ contain torsion for only finitely many prime
numbers.  This is true because they are finitely generated
$\ZM$-modules and are non-zero in finitely many degrees and on finitely many strata.
\end{proof}

\begin{remark}
In general, it is very difficult to determine for which $p$ the conclusion of the previous proposition holds.
\end{remark}

\subsection{Torsion primes} \label{sec-torsion}

Our assumptions~(\ref{assump-parity}) and~(\ref{assump-parity2}) on
the space $X$ are quite strict.  If we work in the equivariant
setting, they might not even be satisfied when $X$ is a single
point. However, once we invert a set of prime numbers in $k$ depending
on the group $G$, called the torsion primes, the conditions are
satisfied at least for a point. 
In this subsection,
we recall from \cite{Steinberg-torsion} some facts about torsion primes.

Let $G$ be a reductive group with associated root datum $(\XB,\Phi,\YB,\Phi^\vee)$.
A reductive subgroup of $G$ is called regular if it contains a maximal
torus $T$ of $G$. Because our base field is $\CM$, regular reductive
subgroups containing $T$ are in bijection with $\ZM$-closed subsystems of
$\Phi$, i.e. $\Phi_1 \subset \Phi$ satisfying $\ZM \Phi_1 \cap \Phi = \Phi_1$.

\begin{defi}
A prime $p$ is a {\bf torsion prime} for $G$ if $\pi_1(H) = \YB/\ZM\Phi_1^\vee$ has $p$-torsion,
for some regular reductive subgroup $H$ of $G$ with root datum $(\XB,\Phi_1,\YB,\Phi_1^\vee)$.
\end{defi}

It follows from the definition that the torsion primes of any regular
reductive subgroup $H$ of $G$ are among those of $G$. 
The reason that we are interested in torsion primes is the 
following theorem of Borel \cite{Borel-torsion,Rothenberg-Steenrod,DEM,Kac-torsion}.

\begin{thm}
\label{thm-BGtorsion}
The following conditions are equivalent:
\begin{enumerate}
\item the prime $p$ is not a torsion prime for $G$;
\item the cohomology $H^*(G,\ZM)$ of $G$ has no $p$-torsion;
\item the cohomology $H^*(B_G,\ZM)$ of the classifying space of $G$ has no $p$-torsion.
\end{enumerate}
Moreover, if $k$ is a field whose characteristic either is zero or satisfies the
above conditions, then $H^*(G,k)$ is an exterior algebra with generators
of odd degrees, while $H^*(B_G,k)$ is a polynomial
algebra on generators of one higher degree.
\end{thm}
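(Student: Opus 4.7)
The plan is to establish the three equivalences together with the structural ``moreover'' statement, proceeding via the cycle $(3) \Rightarrow (2) \Rightarrow (1) \Rightarrow (3)$ and passing throughout to a compact form $K$ of $G$, which has the same (classifying space) cohomology but a more tractable underlying topology.

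For $(3) \Rightarrow (2)$, along with the exterior/polynomial correspondence, I would analyse the Serre spectral sequence of the universal fibration $G \to EG \to B_G$. Contractibility of $EG$ forces all primitive generators of $H^*(G,\ZM)$ to transgress, and if $H^*(B_G,\ZM)$ has no $p$-torsion one can show inductively that $H^*(B_G,\ZM)_{(p)}$ is a polynomial ring whose generators lift through the transgression to exterior generators, all of odd degree, of $H^*(G,\ZM)_{(p)}$; this simultaneously establishes $(2)$ and the structural statement.

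For $(2) \Rightarrow (1)$, I would argue contrapositively. Suppose $p$ is a torsion prime, so there is a regular reductive subgroup $G'$ of $G$, whose root system is integrally closed in that of $G$, with $\pi_1(G')$ having $p$-torsion. From the short exact sequence $1 \to \pi_1(G') \to \tilde G' \to G' \to 1$ and the Bockstein applied to a non-trivial class in $H^1(G',\FM_p)$ detected by $\pi_1(G')$, one obtains a $p$-torsion class in $H^2(G',\ZM)$, and hence in $H^*(BG',\ZM)$. The integral closure hypothesis is precisely what allows this torsion class to be transported to $H^*(G,\ZM)$ via the restriction map associated to $G' \hookrightarrow G$, violating $(2)$.

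The hardest step is $(1) \Rightarrow (3)$. I would consider the Serre spectral sequence of the fibration $G/T \to B_T \to B_G$ for $T$ a maximal torus. The Bruhat decomposition endows $G/T$ with a CW-structure having only even-dimensional cells, and the classification of torsion primes via the highest coroot coefficients $n_\alpha^\vee$ is used to prove that $H^*(G/T,\ZM)$ itself is $p$-torsion free whenever $p$ is not a torsion prime. Combined with the torsion-freeness of $H^*(B_T,\ZM)$ and a transfer-type argument identifying $H^*(B_G,\ZM)_{(p)}$ with the $W$-invariants of $H^*(B_T,\ZM)_{(p)}$, the spectral sequence degenerates and yields $(3)$. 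The principal obstacle lies here: the $p$-torsion freeness of $H^*(G/T,\ZM)$ under the torsion-prime hypothesis is the deep classification-theoretic input that uses the explicit description of integrally closed root subsystems, and is the core technical content of Borel's original argument, for which I would invoke \cite{Borel-torsion,DEM,Kac-torsion} rather than reproduce it.
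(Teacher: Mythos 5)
The paper does not actually prove this statement: it is quoted verbatim as a classical theorem of Borel (with refinements by Demazure, Kac, and Rothenberg--Steenrod) and justified solely by the citations \cite{Borel-torsion,Rothenberg-Steenrod,DEM,Kac-torsion}. So there is no proof in the text to compare yours against, and you are right that for the heart of the matter one should simply invoke those references. That said, two points in your sketch would not survive being made precise.

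First, in the step $(2)\Rightarrow(1)$ you propose to take the $p$-torsion class in $H^2(G',\ZM)$ coming from $\pi_1(G')$ and ``transport'' it to $H^*(G,\ZM)$ ``via the restriction map associated to $G'\hookrightarrow G$.'' But restriction goes the other way, $i^*\colon H^*(G,\ZM)\to H^*(G',\ZM)$, and a surjection can certainly create torsion in the target that is absent from the source (e.g.\ $\ZM\to\ZM/p$). Detecting torsion in $H^*(G)$ from torsion in $H^*(G')$ requires genuinely different input --- a transfer along $G/G'$, or passing through $G/T$ and $BG$ rather than comparing $H^*(G)$ and $H^*(G')$ directly. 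Second, in $(1)\Rightarrow(3)$ you locate the ``core technical content'' in proving that $H^*(G/T,\ZM)$ is $p$-torsion free under the torsion-prime hypothesis. But $H^*(G/T,\ZM)$ is \emph{always} torsion free and concentrated in even degree, for any compact connected Lie group and any $p$: via the diffeomorphism with $G_{\CM}/B$ the Bruhat cells give an even-dimensional CW structure, with no hypothesis on $p$ needed. The torsion-prime hypothesis enters elsewhere --- in the degeneration of the spectral sequence of $G/T\to B_T\to B_G$ and in Demazure's analysis of when $H^*(B_T,\ZM)^W$ is a polynomial ring of the expected degrees --- so your sketch misattributes where the hard arithmetic actually lives. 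Since the paper itself defers entirely to Borel et al., deferring is the right move; but the two steps above, as written, contain genuine logical errors rather than merely suppressed detail.
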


By the universal coefficient theorem, we can conclude that if $k$ is a ring in which all torsion primes for $G$ are
invertible, then $H^*_G(\pt,k)$ is even and torsion-free, and the same is true
for any regular reductive subgroup $H$ of $G$.

We now recall how to determine the set of torsion primes.
A reductive group has the same torsion primes as its derived subgroup.
The torsion primes of a semi-simple group
are those of its simply connected cover, together with the primes
dividing the order of its fundamental group. The set of torsion primes of a
semi-simple and simply connected group is the union of those of its
simple factors.

Hence we are reduced to considering $G$ simple and simply connected.
Let us choose a system of simple roots $\Delta$ of $\Phi$.
Let then $\tilde\alpha$ denote the highest root of $\Phi$,
and let $\tilde\alpha^\vee = \sum_{\alpha\in\Delta} n_\alpha^\vee \alpha^\vee$
be the decomposition of the corresponding coroot into simple coroots.
Finally, let $n^\vee$ denote the maximum of the $n_\alpha^\vee$.

\begin{thm}
 If $G$ is simple and simply connected and $p$ is a prime, then the following conditions are equivalent:
\begin{enumerate}
 \item $p$ is a torsion prime for $G$;
 \item $p \leq n^\vee$;
 \item $p$ is one of the $n_\alpha^\vee$;
 \item $p$ divides one of the $n_\alpha^\vee$.
\end{enumerate}
\end{thm}

Thus the torsion primes of the simple, simply connected groups are given by the following table:

\[
 \begin{array}{c|c|c|c}
  A_n, C_n & B_n (n \geq 3), D_n, G_2 & E_6, E_7, F_4 & E_8\\
\hline
\text{none} & 2 & 2,3 & 2,3,5
 \end{array}
\]

\subsection{Ind-varieties}
\label{sec:ind}

In this section we comment on how the results of this section
generalise straightforwardly to the  
slightly more general setting of ind-varieties. Recall that 
an ind-variety $X$
is a topological space, together with a filtration
\[ X_0 \subset X_1 \subset X_2 \subset \dots \]
such that $X = \cup X_n$,
each $X_n$ is a complex algebraic variety, and the
inclusions $X_n \hookrightarrow X_{n+1}$ are closed embeddings.
We will always assume that each $X_n$ carries the classical
topology and equip $X$ with the final topology
with respect to all inclusions $X_n \hookrightarrow X$.
By a stratification of $X$ we mean a stratification of each
$X_n$ such that the inclusions preserve the strata. We will 
also consider the case where $X$ is acted upon by a
linear algebraic pro-group $G$, by which we mean that
$G$ acts on each $X_n$ through a quotient isomorphic to a linear algebraic group, that each such action is algebraic, and that each of the inclusions
$X_n \hookrightarrow X_{n+1}$ are $G$-equivariant. For the 
basic properties of ind-varieties and pro-groups we refer
the reader to \cite[Chapter 4]{Ku}.

Now fix a complete local ring $k$ and let 
\[ X = \bigsqcup_{\lambda \in \Lambda} X_{\lambda}
 \]
be a stratified ind-variety, or an 
ind-$G$-variety with $G$-stable stratification for some linear
algebraic group $G$.
We write $D(X)$ for the full subcategory of the 
bounded (equivariant) derived category of sheaves of $k$-vector 
spaces consisting of objects $\FC$ such that:
\begin{enumerate}
\item the support of $\FC$ is contained in $X_n$ for some $n$;
\item the cohomology sheaves of $\FC$ are constructible with respect to the 
stratification.
\end{enumerate}

We assume that \eqref{assump-parity} holds for the strata of $X_{\lambda}$.
The notion of parity still makes sense 
and it is immediate that the analogue of
Theorem \ref{indecs} applies. 
In particular, given any (equivariant) indecomposable local system 
$\LC \in \Loc(X_{\lambda})$ there is, up to isomorphism and shifts,
at most one indecomposable 
parity sheaf $\EC(\lambda) \in D(X)$ supported on $\ov X_{\lambda}$
and extending $\LC[d_{\lambda}]$.

In what follows we will
refer without comment to results which we have proved previously
for varieties, but where an obvious analogue holds for ind-varieties.

\section{The Decomposition Theorem and intersection forms}
\label{sec-decomp}

In their proof of the decomposition theorem for semi-small maps
\cite{dCM}, de Cataldo and
Migliorini highlighted the crucial role played by 
intersection forms associated to the strata of the target: 
a certain splitting implied by the decomposition theorem is equivalent to 
these forms being non-degenerate. Then they prove
the non-degeneracy using techniques from Hodge theory.

In our situation, where we consider modular coefficients, these forms
may be degenerate. In this section we explain how the non-degeneracy
of these forms, together with the semi-simplicity of certain local
systems, provide necessary and sufficient conditions for the
Decomposition Theorem to hold in positive characteristic.
For this, we do not have to assume that $X$ satisfies
\eqref{assump-parity} or \eqref{assump-parity2}.
If the map we consider is even (and not necessarily semi-small) then the direct image will
be a direct sum of parity sheaves. Assuming now the parity conditions
on the strata (ensuring the uniqueness of parity sheaves),
we will see that even if the decomposition fails, one can still use
intersection forms to determine the multiplicities of parity sheaves
that occur in the direct image of the constant sheaf.

In Section \ref{subsec-intproduct} we recall the definition and basic
properties of intersection forms on Borel-Moore homology.
In Section \ref{subsec-DTSS} we relate these intersection forms to the
failure of the Decomposition Theorem for semi-small maps in
characteristic $p$. In Section \ref{subsec-decompparity} we examine
the decomposition of the direct image into parity sheaves in the case
where the morphism is even.

Aside from Section \ref{subsec-intproduct}, we assume that $k$ is a
field (but see Remark \ref{rem:Odecomp}).
All cohomology groups are assumed
to have values in $k$ and, as always,
dimension refers to the complex dimension unless
otherwise stated.

\subsection{Borel-Moore homology and intersection forms}
\label{subsec-intproduct}

In this subsection we recall some basic properties of Borel-Moore
homology and intersection forms. For more details the reader is
referred to \cite{Ful} or \cite{CG}.

For any variety $X$ we let $a_X : X \to \pt$ denote the projection to
a point. The dualising sheaf on $X$ is
$\omega_X := a_X^! \uk_{\pt}$. One may define the Borel-Moore homology
of $X$ to be
\begin{equation*}
\HBM_i(X) = H^{-i}( a_{X*} a_X^! \uk_{\pt}) = \Hom( \uk_X, \omega_X[-i]).
\end{equation*}
A proper map $\pi : X \to Y$ induces a map $\HBM_\bullet(X) \to
\HBM_\bullet(Y)$ which may be described as follows: given a class $\a :
\uk_X \to \omega_X[-i] \in \HBM_i(X)$ its image in $\HBM_i(Y)$ is the class
\begin{equation*}
\uk_Y \to \pi_*\pi^*\uk_Y = \pi_* \uk_X \stackrel{\pi_* \a}{\longrightarrow}
\pi_* \omega_X[-i] = \pi_!\pi^! \omega_Y[-i] \to \omega_Y[-i]
\end{equation*}
where the first and last arrows are the adjunction morphisms.

Let $Y$ be a smooth and connected variety of dimension $n$. As $Y$ has
a canonical orientation (after choosing an orientation on $\CM$) we
have an isomorphism $\mu_Y : \uk_Y \stackrel{\sim}{\to}
\omega_Y[-2n]$. If we regard $\mu_Y$ as an element of $\HBM_{2n}(Y)$
it is called the {\bf fundamental class}. Even if $Y$ is singular 
 of dimension $n$, $\HBM_{2n}(Y)$ is still freely
generated by the fundamental classes of the irreducible components of
$Y$ of maximal dimension.

Now suppose that $F \stackrel{i}{\hookrightarrow} Y$ is a closed
embedding of a variety $F$ into a smooth variety of dimension $n$.
For all $m$ we have a canonical isomorphism
\[ \HBM_m(F) \cong H^{2n-m}(Y, Y - F). \]
Recall that there exists a cup product on relative cohomology. We may
use this to define an intersection form of $F$ inside $Y$:
\begin{equation*}
\xymatrix@C=0.1cm{
\HBM_{p}(F) \ar[d]^{\sim}& \otimes &  \HBM_{q}(F)\ar[d]^{\sim} & \ar[rr]  &  & &   
\HBM_{p + q - 2n}(F)\ar[d]^{\sim} \\
H^{2n - p}(Y, Y- F) & \otimes & H^{2n - q}(Y, Y- F) & \ar[rr]^{\cup} &  & & H^{4n - p - 
q}(Y, Y- F)}
\end{equation*}
Note that this product depends on the inclusion $F
\hookrightarrow Y$.

Below we will need a slight variation of this intersection form in case $p + q = 2n$.
If $F$
is a proper subvariety of $Y$ then we can compose the intersection
form with the map $\HBM_0(F) \to \HBM_0(\pt)$ induced from the proper
map $F \to \pt$ to obtain a pairing
\[
B_F^m: \HBM_{n+m}(F) \times   \HBM_{n-m}(F) \to  
\HBM_0(\pt) = k.
\]
Geometrically, this pairing corresponds to intersecting cycles
but forgetting in which connected component of $F$ the points live.

It is particularly interesting when $F$ is proper and
half-dimensional inside $Y$. In this case we obtain an intersection form
\begin{equation*}
B_F^0: \HBM_{\top}(F) \otimes \HBM_{\top}(F) \to k
\end{equation*}
where $\top$ denotes the real dimension of $F$. From the above
comments, $\HBM_{\top}(F)$ has a basis given by the irreducible
components of maximal dimension of $F$. It also follows that this
intersection form over any ring is obtained by extension of
scalars from the corresponding form over $\ZM$.

The effect of forming the Cartesian product with a smooth and contractible space on
Borel-Moore homology is easy to describe (and will be needed
below). If $U$ is an open contractible subset
of $\CM^m$, then for any $i \in \ZM$, we have canonical isomorphisms
\begin{align*}
\HBM_{i}(X \times U) \cong \HBM_{i-2m}(X).
\end{align*}
These isomorphisms are compatible with the intersection forms of $F
\hookrightarrow Y$ and $F \times U \hookrightarrow Y \times U$.
\excise{
We have a commutative diagram with vertical isomorphisms
\begin{equation*}
\xymatrix@C=0.1cm{
\HBM_{p}(F) \ar[d]^{\sim}& \otimes &  \HBM_{q}(F)\ar[d]^{\sim} & \ar[rr]  &  & &   
\HBM_{p + q - 2n}(F)\ar[d]^{\sim} \\
\HBM_{p+2m}(F \times \RM^m) & \otimes &  \HBM_{q + 2m}(F\times \RM^m)  & \ar[rr]  
&  & &   \HBM_{p + q - 2n + 2m}(F \times \RM^m)}
\end{equation*}}

\subsection{Multiplicities and intersection forms}
\label{subsec-DTSS}

In this Section, we do not assume that the stratified variety $X$
satisfies the conditions \eqref{assump-parity} or \eqref{assump-parity2}.
Moreover, in this Section and the next, we assume that $k$ is a field.

In Section \ref{subsub-KS} we explain how multiplicities of indecomposable
objects in a $k$-linear Krull-Remak-Schmidt category are encoded in certain
bilinear forms.
In Section \ref{subsub-sspoint} we examine the splitting at the ``most
singular point'' and relate it to an intersection form on the fibre.
This is used in Section \ref{subsub-dt} to relate the
Decomposition Theorem to the non-degeneracy of the intersection forms
attached to each stratum.

\subsubsection{Bilinear forms and multiplicities in Krull-Remak-Schmidt categories}

\label{subsub-KS}

Let $H$ and $H^{\prime}$ be finite dimensional $k$-vector spaces and
consider a bilinear map
\[
B : H \times H^{\prime} \to k.
\]
We define:
\begin{gather*} 
{}^{\perp} B := \{ \alpha \in H \; | \; B(\alpha, \beta) = 0 
\text{ for all $\beta \in H^{\prime}$} \}, \\
B^{\perp} := \{ \beta \in H^{\prime} \; | \; B(\alpha, \beta) = 0 
\text{ for all $\alpha \in H$} \}.
\end{gather*}
Then $B$ induces a perfect pairing
\[
H / {}^{\perp} B \times H^{\prime} / B^{\perp} \to k
\]
and we have equalities
\[ \dim (H / {}^{\perp} B)
 = \rank B = 
\dim (H^{\prime} / B^{\perp}). \]
If $H = H^{\prime}$ and $B$ is a symmetric bilinear form then we write $\rad
B$ instead of ${}^{\perp} B = B^{\perp}$.

Let $\CC$ be a Krull-Remak-Schmidt $k$-linear category with finite
dimensional $\Hom$-spaces (see Section \ref{subsec-naa}). Let $a \in \CC$
denote an indecomposable object. Given any object $x \in \CC$
we can write $x \simeq a^{\oplus m} \oplus y$ such that $a$ is not 
a direct summand in $y$. The integer $m$ is called the {\bf multiplicity}
of $a$ in $x$. This multiplicity is well-defined because $\CC$ is
Krull-Remak-Schmidt.

\begin{lem} \label{lem-multiplicity} Assume that $\End(a) = k$.  Composition gives us a pairing:
\begin{equation}
\label{eq-KSPairing}
\begin{array}{ccccc}
B &:& \Hom(a,x) \times \Hom(x,a) &\longto& \End(a) = k\\
&&(\alpha,\beta) &\longmapsto& \beta \circ \alpha.
\end{array}
\end{equation}
The multiplicity of $a$ in $x$ is equal to the rank of $B$.
\end{lem}

\begin{proof}
Choose an isomorphism $\phi : x \elem{\sim} a^{\oplus m} \oplus y$.
Inclusion and projection define subspaces
$\Hom(a,y)  \subset \Hom(a,x)$ and $\Hom(y,a) \subset \Hom(x,a)$.
We will show that
\begin{equation}
\label{eq:hom=rad}
\Hom(a,y) = {}^{\perp} B \qquad \text{and} \qquad \Hom(y,a) = B^{\perp}.
\end{equation}
Thus these subspaces do not depend on the choice of $\phi$, and
this will show that the multiplicity of $a$ in $x$ is equal to the rank of $B$.

The isomorphism $\phi$ induces isomorphisms
\begin{gather*}
\Hom(a,x) \simeq \Hom(a,a^{\oplus m}) \oplus \Hom(a,y),\\
\Hom(x,a) \simeq \Hom(a^{\oplus m},a) \oplus \Hom(y,a).
\end{gather*}
For $\alpha$ in $\Hom(a,x)$ and $\beta$ in $\Hom(x,a)$, we write
$\alpha = \alpha_1 \oplus \alpha_2$ and
$\beta = \beta_1 \oplus \beta_2$ for the corresponding decompositions.
Thus we have $B(\alpha,\beta) = \beta_1 \alpha_1 + \beta_2 \alpha_2$.

We actually have
\begin{equation}
\label{eq:b2a2=0}
\beta_2 \alpha_2 = 0.
\end{equation}
Otherwise, since $\End(a) = k$, we could
assume that $\beta_2 \alpha_2 = \id$, in which case
$e := \alpha_2 \beta_2$ would be an idempotent in $\End(y)$, and $a$ would be a
direct summand of $y$, contrary to our assumption. 

Thus we have $B(\alpha,\beta) = \beta_1 \alpha_1$ for all
$\alpha$ and $\beta$. Hence we have inclusions
\begin{equation}
\label{eq-incl}
\Hom(a,y) \subset {}^{\perp} B \qquad \text{and} \qquad \Hom(y,a) \subset B^{\perp},
\end{equation}
and $B$ induces a bilinear form
\[
\widetilde{B} : \Hom(a,a^{\oplus m}) \times \Hom(a^{\oplus m},a) \longto \End(a) = k.
\]
Now
$\Hom(a,a^{\oplus m}) \simeq \End(a)^{\oplus m} \simeq k^{\oplus m}$
and similarly $\Hom(a^{\oplus m},a) \simeq k^{\oplus m}$.
With these identifications $\widetilde{B}$ is just the standard bilinear form
on $k^{\oplus m}$, hence it is non-degenerate, and we have equalities
in \eqref{eq-incl}.
\end{proof}

Let us assume further that $\CC$ is equipped with a duality\footnote{A
  duality is a contravariant equivalence $D : \CC \stackrel{\sim}{\to}
  \CC^{op}$ together with a natural isomorphism $D^2 \cong Id_{\CC}$,
  where $Id_{\CC}$ denotes the identity functor on $\CC$.} $D$ 
and we have isomorphisms $a \stackrel{\sim}{\to} Da$ and $x
\stackrel{\sim}{\to} Dx$. Then, using these isomorphisms, we may
identify $\Hom(a,x)$ and $\Hom(x,a)$. In which case the composition
\eqref{eq-KSPairing} is given by a bilinear form on $H = \Hom(a,x) =
\Hom(x,a)$ and the multiplicity of $a$ in $x$ is equal to the rank of
this form.

\subsubsection{Splitting at the most singular point}
\label{subsub-sspoint}
Consider a proper surjective morphism
\[ \pi : \widetilde{X} \to X \]
with $\widetilde{X}$ smooth of dimension $n$. Fix a point $s \in X$ and form the Cartesian diagram:
\[
\begin{array}{c}
\xymatrix@C=1cm{ F \ar[r]^{\tilde{i}} \ar[d]^{\tilde{\pi}} & \widetilde{X} \ar[d]^{\pi}  \\
\{ s \} \ar[r]^{i} & X }
\end{array} \]
Let $B_F^m$ denote the intersection form
\[
B_F^m: \HBM_{n+m}(F) \times \HBM_{n-m}(F) \to \HBM_0(\pt) = k
\]
associated to the inclusion $F \hookrightarrow \widetilde{X}$.\footnote{See
Section \ref{subsec-intproduct} for the definition of the intersection
form. Note in particular
that the image of $B_F^m$ is $\HBM_0(\pt)$ and not $\HBM_0(F)$.}

\begin{prop} \label{prop-sspointmult}
The multiplicity of $i_*\uk_{s}[m]$ as a direct summand of
$\pi_* \uk_{\widetilde{X}}[n]$ is equal to the rank of $B_F^m$. 
\end{prop}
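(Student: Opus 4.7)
The plan is to apply the Krull--Schmidt multiplicity criterion of Section~\ref{subsub-KS} to $a = i_* \uk_s$ and $b = \pi_* \uk_{\widetilde{X}}[2n]$. The object $a$ is simple with $\End(a) = k$, and both $a$ and $b$ are Verdier self-dual: for $a$ this is obvious, and for $b$ it follows from $\omega_{\widetilde{X}} \cong \uk_{\widetilde{X}}[4n]$ (since $\widetilde{X}$ is smooth of complex dimension $2n$) together with the fact that proper push-forward commutes with Verdier duality. By the discussion of Section~\ref{subsub-KS}, the multiplicity will equal the rank of the modular reduction of the symmetric bilinear form
\[
B : \Hom(a,b) \otimes \Hom(a,b) \longto \End(a) = k
\]
coming from composition (after identifying $\Hom(a,b)$ with $\Hom(b,a)$ via the two self-dualities), provided $\Hom(a,b)$ is a free $k$-module. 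Thus it suffices to (i) compute $\Hom(a,b)$ and show it is free, and (ii) identify $B$ with the geometric intersection form $B_s$.

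For (i), I would chain together the adjunctions $(\pi^*,\pi_*)$ and $(\tilde i_*, \tilde i^!)$ with proper base change $\pi^* i_* \cong \tilde i_* \tilde\pi^*$. Since $\tilde\pi$ maps $F$ to a point, $\tilde\pi^* \uk_s = \uk_F$, and since $\widetilde X$ is smooth of complex dimension $2n$, $\tilde i^! \uk_{\widetilde X}[2n] \cong \tilde i^! \omega_{\widetilde X}[-2n] = \omega_F[-2n]$. This gives
\[
\Hom(i_* \uk_s, \pi_* \uk_{\widetilde{X}}[2n]) \;\cong\; \Hom(\uk_F, \omega_F[-2n]) \;=\; \HBM_{2n}(F) \;=\; \HBM_{\top}(F).
\]
Freeness of this $k$-module is guaranteed by the fact recalled in Section~\ref{subsec-intproduct}: $\HBM_{2n}(F)$ is freely generated by the fundamental classes of the $n$-dimensional irreducible components of $F$.

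For (ii), I would rewrite the same $\Hom$ group as $H^{2n}(\widetilde X, \widetilde X \setminus F)$ using the standard identification $\HBM_{2n}(F) \cong H^{2n}(\widetilde X, \widetilde X \setminus F)$ recalled in Section~\ref{subsec-intproduct}, so that composition in the derived category becomes cup product in the relative cohomology ring. Concretely, a class $\alpha \in \HBM_{2n}(F)$ corresponds to a morphism $\uk_{\widetilde X} \to \tilde i_* \tilde i^! \uk_{\widetilde X}[2n]$ in $D(\widetilde X)$, which one pushes down along $\pi$ to obtain the element of $\Hom(a,b)$; a class $\beta$ corresponds dually to an element of $\Hom(b,a)$; and their composition inside $\End(i_*\uk_s)=k$ is obtained from the adjunction counit $\tilde i^* \tilde i_* \to \id$ applied after the cup product of the two relative classes. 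This is exactly the definition of $B_s(\alpha,\beta)$ given in Section~\ref{subsec-intproduct}.

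The main obstacle is step (ii): one must chase several adjunction counits and base-change isomorphisms to verify that the abstract composition pairing on $\Hom(a,b) \otimes \Hom(b,a)$ agrees, \emph{on the nose} and not just up to sign, with the cup product pairing on $H^{2n}(\widetilde X, \widetilde X \setminus F) \otimes H^{2n}(\widetilde X, \widetilde X \setminus F) \to H^{4n}(\widetilde X, \widetilde X \setminus F)$ followed by the evaluation against the fundamental class of $\widetilde X$ along $F$. Once this matching is verified, the conclusion is immediate from Section~\ref{subsub-KS}: the multiplicity of $i_*\uk_s$ in $\pi_* \uk_{\widetilde X}[2n]$ equals $\rank(\overline{B}) = \rank(\overline{B_s})$.
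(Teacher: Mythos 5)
Your proposal is correct and follows essentially the same route as the paper: both use the Krull--Schmidt multiplicity criterion of Section~\ref{subsub-KS}, identify $\Hom(i_*\uk_s,\pi_*\uk_{\widetilde X}[2n])$ (and its dual) with $\HBM_{\top}(F)$ by a chain of adjunctions and proper base change, and then match the composition pairing with the cup product on $H^{2n}(\widetilde X,\widetilde X\setminus F)$, i.e.\ the intersection form $B_s$. The paper likewise leaves the final identification of the two pairings to a "routine" adjunction chase (isolated as a separate lemma), so your acknowledgment that this is the remaining technical step is faithful to the actual argument.
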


\begin{remark} Because  $\pi_* \uk_{\widetilde{X}}[n]$ is self-dual
 $i_*\uk_{s}[m]$ and $i_*\uk_{s}[-m]$ occur with equal multiplicities as
direct summands of $\pi_* \uk_{\widetilde{X}}[n]$. This also follows
from the above proposition: $B_F^{m}$ and $B_F^{-m}$ are
transpose, and hence have the same rank.
\end{remark}

\begin{proof} By the discussion of the previous section, if $B$ denotes
  the pairing given by composition
\[ B : \Hom(i_*\uk_{s}[m], \pi_* \uk_{\widetilde{X}}[n])
\times \Hom(\pi_* \uk_{\widetilde{X}}[n], i_*\uk_{s}[m]) \to k \]
then the multiplicity of $i_* \uk_s[m]$ in $\pi_* \uk_{\widetilde{X}}[n]$
is given by the rank of $B$.
A string of adjunctions gives canonical identifications:
\begin{gather*} 
\Hom(i_*\uk_{s}[m], \pi_* \uk_{\widetilde{X}}[n]) = \HBM_{n+m}(F), \\
\Hom(\pi_* \uk_{\widetilde{X}}[n], i_*\uk_{s}[m]) = \HBM_{n-m}(F).
\end{gather*}
Hence we are interested in a pairing
\begin{equation} \label{pair}
B: \HBM_{n+m}(F) \times \HBM_{n-m}(F) \to k.
\end{equation}
By the lemma below, this is the intersection form $B_F^m$.
The proposition then follows.
\end{proof}

\begin{lem} The pairing $B$ in \eqref{pair} is the intersection form $B_F^m$. \end{lem}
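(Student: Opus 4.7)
The plan is to unwind the pairing (\ref{pair}) through the two adjunction isomorphisms used in the proof of Proposition~\ref{prop-sspointmult} and observe that the resulting bilinear form is exactly the cup-product definition of the intersection pairing recalled in Section~\ref{subsec-intproduct}.

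First I make the adjunctions concrete. Since $\pi$ is proper, $\pi_* = \pi_!$, and combining $(\pi_!,\pi^!)$-adjunction with base change $\pi^! i_* \cong \tilde i_* \tilde\pi^!$ gives
\[
\Hom(\pi_*\uk_{\widetilde X}[2n], i_*\uk_s) \;\xrightarrow{\sim}\; \Hom_{D(F)}(\uk_F[2n], \tilde\pi^!\uk_s) \;=\; \Hom_{D(F)}(\uk_F, \omega_F[-2n]) \;=\; \HBM_{2n}(F),
\]
using $\tilde\pi^!\uk_s = \omega_F$. Dually, using $(i_!,i^!)$-adjunction and base change $i^!\pi_* \cong \tilde\pi_*\tilde i^!$, one obtains
\[
\Hom(i_*\uk_s, \pi_*\uk_{\widetilde X}[2n]) \;\xrightarrow{\sim}\; \Hom_{D(F)}(\uk_F, \tilde i^!\uk_{\widetilde X}[2n]) \;=\; H^{2n}_F(\widetilde X;\uk).
\]
Both Hom groups are in fact the \emph{same} cohomology group, because the Verdier self-duality of $\uk_{\widetilde X}[2n]$ on the smooth variety $\widetilde X$ identifies $\omega_F[-2n]$ with $\tilde i^!\uk_{\widetilde X}[2n]$, and both sides are thereby canonically identified with $\HBM_{2n}(F)$ via the Poincar\'e--Lefschetz isomorphism of Section~\ref{subsec-intproduct}.

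Second, I trace the composition. Let $\alpha',\beta' \in \Hom_{D(F)}(\uk_F, \tilde i^!\uk_{\widetilde X}[2n])$ be the classes associated with $\alpha \in \Hom(i_*\uk_s,\pi_*\uk_{\widetilde X}[2n])$ and $\beta \in \Hom(\pi_*\uk_{\widetilde X}[2n], i_*\uk_s)$. Unravelling the adjunctions, $\beta \circ \alpha \in \End(i_*\uk_s) = k$ becomes the composition
\[
\uk_s \;\longrightarrow\; a_{F*}\bigl(\tilde i^!\uk_{\widetilde X}[2n] \otimes \tilde i^!\uk_{\widetilde X}[2n]\bigr) \;\xrightarrow{\cup}\; a_{F*}\,\tilde i^!\uk_{\widetilde X}[4n] \;=\; a_{F*}\omega_F \;\longrightarrow\; \uk_s,
\]
where the first map is $a_{F*}(\alpha'\otimes\beta')$, the middle arrow is the cup product, and the last is the pushforward to a point. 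Under the identifications above this is exactly the cup-product pairing
\[
H^{2n}_F(\widetilde X;\uk) \otimes H^{2n}_F(\widetilde X;\uk) \;\xrightarrow{\cup}\; H^{4n}_F(\widetilde X;\uk) \;=\; \HBM_0(F) \;\to\; k,
\]
which is the very definition of the intersection form of $F \hookrightarrow \widetilde X$ given in Section~\ref{subsec-intproduct}.

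The main obstacle is keeping track of the shifts, the orientation $\mu_{\widetilde X}$, and the compatibility of the two separate base-change isomorphisms with composition. A clean way to discharge this is to reduce to the basis of $\HBM_\top(F)$ consisting of the fundamental classes of its top-dimensional irreducible components; for such classes both forms compute, via transverse intersections in the smooth ambient space $\widetilde X$, the local intersection numbers, so the equality is a local check.
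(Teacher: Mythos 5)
Your proposal follows the paper's own argument: both identify the Hom groups with $\HBM_{2n}(F) \cong H^{2n}(\widetilde X, \widetilde X \setminus F)$ via the adjunctions and proper base change, reinterpret the composition as the cup product on relative cohomology, and then leave the sign/shift bookkeeping to a ``routine'' diagram chase. The only point worth flagging is that your final heuristic (checking on fundamental classes of components via transverse intersections) does not genuinely shortcut the chase, since identifying the \emph{composition} pairing with local intersection numbers is precisely what has to be proved; the paper's proof also declines to spell this out, so the two arguments are at the same level of detail.
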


\begin{proof} 
 In the course of the following proof all morphisms which are not
  described explicitly are either adjunction morphisms or the
  canonical morphisms $i^! \to i^*$ and $\tilde{i}^! \to
  \tilde{i}^*$. Also, every time we say ``identify'' we mean 
  ``canonically identify''.

Recall that the intersection form is defined via the cup product on 
relative cohomology. Let $\gamma_1$ and $\gamma_2$ be classes
in $\HBM_{n+m}(F) = H^{n-m}(\widetilde{X}, \widetilde{X}- 
F)$ and $\HBM_{n-m}(F) = H^{n+m}(\widetilde{X}, \widetilde{X}- 
F)$ respectively. If we represent them as morphisms $
\gamma_1: \uk_{\widetilde{X}} \to \tilde{i}_! \tilde{i}^{!}
\uk_{\widetilde{X}}[n-m]$ and $\gamma_2  : \uk_{\widetilde{X}} \to \tilde{i}_! \tilde{i}^{!} \uk_{\widetilde{X}}[n+m]$ their cup product is the morphism
\begin{equation*}
\gamma_1 \cup \gamma_2 : \uk_{\widetilde{X}} \stackrel{\gamma_1}{\longrightarrow} \tilde{i}_! \tilde{i}^{!} \uk_
{\widetilde{X}}[n-m]  \to \uk_{\widetilde{X}}[n-m] \stackrel{\gamma_2[n-m]}{\longrightarrow} \tilde{i}_! \tilde{i}^{!} \uk_{\widetilde
{X}}[2n].
\end{equation*}
Because $\widetilde{X}$ is a smooth variety the
  fundamental class gives an isomorphism 
  $\mu_{\widetilde{X}} : \uk_{\widetilde{X}} \stackrel{\sim}{\to}
  \omega_{\widetilde{X}}[-2n]$ (see \ref{subsec-intproduct}).  Using $i^!
  \omega_{\widetilde{X}} = \omega_F$  and
  adjunction we have an
  identification
\[
\Hom(\uk_{\widetilde{X}}, \tilde{i}_! \tilde{i}^{!}
\uk_{\widetilde{X}}[n-m]) = \Hom(\uk_F, \omega_F[-n-m]).
\]
One can now check that the intersection form
\[
\HBM_{n+m}(F) \times \HBM_{n-m}(F) \to \HBM_0(F)
\]
can be described as follows: given classes $\a : \uk_F \to 
\omega_F[-n-m]$ and $\b : \uk_F \to \omega_F[-n+m]$, their pairing
under the intersection form is the class
\[
\uk_F \to \tilde{i}^!\omega_{\wt{X}}[-n-m] = \tilde{i}^!
\uk_{\wt{X}}[n-m] \to \tilde{i}^* \uk_{\wt{X}}[n-m] = \uk_F[n-m] \to
\omega_F. 
\]

Now consider the following diagram:
\[
\small{
\xymatrix@C=0.3cm@R=0.5cm{
\Hom(\tilde{\pi}^* \uk_{\{ s \}}[m], \tilde{i}^!
\uk_{\widetilde{X}}[n]) \otimes
\Hom(\tilde{i}^*\uk_{\widetilde{X}}[n], \tilde{\pi}^!
\uk_{\{s \}}[m]) \ar[r]^(0.85){B_1} \ar[d]^{\textrm{a}}
& H \ar[d]^{\phi} \\
\Hom(\uk_{\{ s \}}[m], \tilde{\pi}_*\tilde{i}^! \uk_{\widetilde{X}}[n]) \otimes 
\Hom( \tilde{\pi}_*\tilde{i}^* \uk_{\widetilde{X}}[n], \uk_{\{ s \}}[m])
\ar[r]^(0.78){B_2}  \ar[d]^{\textrm{pbc}} & \End(
\uk_{\{ s \}})  \ar[d]^=\\
\Hom(\uk_{\{ s \}}[m], i^!\pi_* \uk_{\widetilde{X}}[n]) \otimes 
\Hom( i^* \pi_* \uk_{\widetilde{X}}[n], \uk_{\{ s \}}[m])
\ar[r]^(0.78){B_3} \ar[d]^{\textrm{a}} & \End(
\uk_{\{ s \}})  \ar[d]^{i_*}\\
\Hom(i_* \uk_{\{ s \}}[m], \pi_* \uk_{\widetilde{X}}[n]) \otimes 
\Hom( \pi_* \uk_{\widetilde{X}}[n], i_* \uk_{\{ s \}}[m])
\ar[r]^(0.78){B_4} & \End(i_*
\uk_{\{ s \}})
} }
\]
where:
\begin{enumerate}
\item $H := \Hom(\tilde{\pi}^* \uk_{\{ s \}}[m], \tilde{\pi}^!
\uk_{\{s \}}[m]) = \Hom(\uk_F[m],\omega_F[m])$,
\item each pairing $B_i$ is canonical,
\item the arrows labelled ``a'' are obtained via adjunction,
\item the arrow labelled ``pbc'' is obtained via proper base change,
\item the morphism $\phi$ is the map which sends $\gamma : \tilde{\pi}^*
  \uk_{\{ s \}}[m] \to \tilde{\pi}^! \uk_{\{s \}}[m]$ to the morphism
\[
\phi(\gamma) : \uk_{\{ s \}}[m] \to \tilde{\pi}_*\tilde{\pi}^*
  \uk_{\{ s \}}[m] \stackrel{\tilde{\pi}_* \gamma}{\longrightarrow} \tilde{\pi}_*\tilde{\pi}^! \uk_{\{s \}}[m] \to \uk_{\{ s \}}[m].
\]
\end{enumerate}

It is straightforward to check that this diagram is
commutative.\footnote{One needs that the following diagram of functors
  is commutative (where the vertical arrows are the base-change
  isomorphisms):
\[ \xymatrix{
i^! \pi_* \ar[r] \ar[d] & i^* \pi_* \ar[d] \\
\tilde{\pi}_*\tilde{i}^! \ar[r] & \tilde{\pi}_* \tilde{i}^*
}
\]
This can be checked directly, by first checking the statement on
sheaves.
}

By the above discussion, $B_1$ may be identified with the
intersection form
\[
\HBM_{n+m}(F) \times \HBM_{n-m}(F) \to H = \HBM_0(F).
\]
Also, if we identify $H = \HBM_0(F)$ and $\End(\uk_{\{
  s\}}) = \HBM_0(\pt)$ then $\phi$ is the map $\HBM_0(F) \to
\HBM_0(\pt)$ induced from the projection $F \to \pt$.

It follows that if we identify
\begin{gather*} 
\Hom(i_*\uk_{\{s\}}[m], \pi_* \uk_{\widetilde{X}}[2n]) = \HBM_{n+m}(F), \\
\Hom(\pi_* \uk_{\widetilde{X}}[2n], i_*\uk_{\{s\}}[m]) = \HBM_{n-m}(F)
\end{gather*}
then we may identify $B_4$ with the composition
\[
\HBM_{n+m}(F) \otimes \HBM_{n-m}(F) \to \HBM_0(F) \to \HBM_0(\pt)
\]
which by definition is $B_F^m$.
\end{proof}

The proof of Proposition \ref{prop-sspointmult} has the following corollary:

\begin{cor} \label{cor-inducedmorph}
The natural morphism
\[ H^m( i^! \pi_* \uk_{\widetilde{X}}[n]) \to 
H^m(i^* \pi_* \uk_{\widetilde{X}}[n]) \]
may be canonically identified with the morphism
\[ \HBM_{n-m}(F) \to \HBM_{n+m}(F)^* \]
induced by the intersection form. \end{cor}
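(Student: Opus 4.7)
The plan is to derive this immediately from the work already done inside the proof of Proposition~\ref{prop-sspointmult}, by passing to the dual. First I identify the two groups appearing in the statement. The identification
\[
H^0(i^!\pi_*\uk_{\widetilde X}[2n])=\Hom(i_*\uk_s,\pi_*\uk_{\widetilde X}[2n])\cong \HBM_{\top}(F)
\]
is already established by the adjunction argument in the proof of Proposition~\ref{prop-sspointmult}. For the target, the same chain of adjunctions gives
\[
\Hom(i^*\pi_*\uk_{\widetilde X}[2n],\uk_s)=\Hom(\pi_*\uk_{\widetilde X}[2n],i_*\uk_s)\cong \HBM_{\top}(F);
\]
since $i^*\pi_*\uk_{\widetilde X}[2n]\cong R\Gamma(F,\uk_F)[2n]$ is, after truncation to the top cohomology, a complex of free $k$-modules over a point, dualising identifies $H^0(i^*\pi_*\uk_{\widetilde X}[2n])$ with $\HBM_{\top}(F)^*$.

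Next I interpret the canonical map $\phi:H^0(i^!\pi_*\uk_{\widetilde X}[2n])\to H^0(i^*\pi_*\uk_{\widetilde X}[2n])$ categorically. Because $i$ is a closed embedding we have $i^*i_*=\id=i^!i_*$. Writing $\eta_{\GC}:\GC\to i_*i^*\GC$ for the unit of the $(i^*,i_*)$ adjunction, the natural transformation $i^!\to i^*$ is obtained as
\[
i^!\GC\xrightarrow{\;i^!\eta_{\GC}\;} i^!i_*i^*\GC = i^*\GC,
\]
so that $\phi$ is induced by this map applied to $\GC=\pi_*\uk_{\widetilde X}[2n]$.

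The main step is then a short adjunction chase. Given $\alpha\in\Hom(i_*\uk_s,\pi_*\uk_{\widetilde X}[2n])$ and $\beta\in\Hom(\pi_*\uk_{\widetilde X}[2n],i_*\uk_s)$, I will check that the scalar $\langle\phi(\alpha),\beta\rangle\in k$, obtained by evaluating $\beta$ (viewed as an element of $H^0(i^*\pi_*\uk_{\widetilde X}[2n])^*$ via the identification above) on $\phi(\alpha)$, equals the composition $\beta\circ\alpha\in\End(i_*\uk_s)=k$. This reduces, using naturality of $\eta$ and $i^*i_*=\id=i^!i_*$, to the identity $i^*(\beta\circ\alpha)=\beta\circ\alpha$ after unwinding; the computation is purely formal.

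Once this is done, the Lemma proved at the end of the proof of Proposition~\ref{prop-sspointmult} identifies the composition pairing $(\alpha,\beta)\mapsto\beta\circ\alpha$ on $\HBM_{\top}(F)\times\HBM_{\top}(F)$ with the intersection form attached to $F\hookrightarrow\widetilde X$. Therefore $\phi$ is, under the canonical identifications, exactly the map $\HBM_{\top}(F)\to\HBM_{\top}(F)^*$ induced by the intersection form, as required. The only potential obstacle is the bookkeeping associated with the identification of $H^0(i^*\pi_*\uk_{\widetilde X}[2n])$ with $\HBM_{\top}(F)^*$ and the explicit form of $\phi$; both are handled transparently once $\phi$ is written as $i^!\eta$.
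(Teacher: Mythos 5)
Your proposal is correct and matches the paper's intent: the paper offers no separate argument, stating only that the corollary falls out of the proof of Proposition~\ref{prop-sspointmult}, and your write-up is precisely the fleshed-out version of that observation, re-using the adjunction identifications $\Hom(i_*\uk_s,\pi_*\uk_{\widetilde X}[2n])\cong\HBM_{\top}(F)\cong\Hom(\pi_*\uk_{\widetilde X}[2n],i_*\uk_s)$ and the Lemma identifying the composition pairing with the intersection form. The one step you add beyond what the paper makes explicit --- recognising the natural map $H^0(i^!\GC)\to H^0(i^*\GC)$ as $i^!\eta_{\GC}$ and checking via naturality that it realises the composition pairing --- is exactly the routine adjunction chase that justifies the word ``Corollary,'' and your sketch of it is sound.
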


\subsubsection{A criterion for the Decomposition Theorem}
\label{subsub-dt}

Let $X$ be a connected, equidimensional variety 
equipped with an algebraic stratification into connected strata
\[ X = \bigsqcup_{\lambda \in \Lambda} X_{\lambda}. \]
In this subsection we do not make any parity assumptions on our stratification. 
We write $d_X$ for the dimension of $X$ and, as usual, 
write $d_{\lambda}$ and $i_{\lambda}$ for the dimension
and inclusion of $X_{\lambda}$ respectively. We fix a 
smooth variety $\widetilde{X}$ and a stratified, proper, surjective,
semi-small morphism
\[ f : \widetilde{X} \to X. \]
We want to understand when the perverse sheaf
$f_* \uk_{\widetilde{X}}[d_{\widetilde{X}}]$
decomposes as a direct sum of intersection cohomology sheaves.
This may be thought of as a global version of the previous section.

As we have assumed that the stratification of $X$ is algebraic
\cite[3.2.23]{CG}, at each point $x \in X_{\lambda}$, we
can choose a stratified slice $N_{\lambda}$ to $X_\l$ in $X$. We obtain 
a Cartesian diagram with $\widetilde{N_{\lambda}}$ smooth:
\begin{equation} \label{diag-gccart}
\begin{array}{c}
\xymatrix{ F_{\lambda} \ar[r]^{\tilde{i}} \ar[d]^{\tilde{\pi}} & \widetilde{N_{\lambda}} \ar[d]^{\pi}  \\
\{ x \} \ar[r]^{i} & N_{\lambda} }
\end{array}
\end{equation}
Note that, as $f$ is semi-small, the dimension of $F$ is less than
or equal to $\frac{1}{2}(d_X - d_{\lambda})$. If equality holds we say
that $X_{\lambda}$ is {\bf relevant} (see \cite{BM2}).

\begin{defi} The {\bf intersection form associated to 
$X_{\lambda}$} is the intersection form on $\HBM_{d_X-d_{\lambda}}(F_{\lambda})$
given by the inclusion $F_{\lambda} \hookrightarrow \widetilde{N_{\lambda}}$.
\end{defi}

Note that $\HBM_{d_X-d_{\lambda}}(F_\l)$ is non-zero if and only if
$X_{\lambda}$ is relevant. Using the discussion at the end of Section 
\ref{subsec-intproduct} it is straightforward to see  that the
above intersection form does not depend on the choice of $x$. 
For each $\lambda$, we define
\begin{equation} \label{eq:Llambda}
\LC_{\lambda} :=
\HC^{-d_{\lambda}} (i_{\lambda}^! f_* \uk_{\widetilde{X}}[d_{\widetilde{X}}]).
\end{equation}
Note that $\LC_{\lambda}$ is a local system on $X_{\lambda}$
which is non-zero if and only if $X_{\lambda}$ is
relevant. The aim of this subsection is to show:

\begin{thm} \label{thm-intformsgeneral}
Suppose that the intersection forms associated to all strata are
non-degenerate. Then one has an isomorphism:
\begin{equation*}
f_* \uk_{\widetilde{X}}[d_{\widetilde{X}}] \cong \bigoplus_{\lambda \in \Lambda} \ic( \LC_{\lambda}).
\end{equation*}
In that case, the full Decomposition Theorem holds if and only if each local
system $\LC_{\lambda}$ is semi-simple.
\end{thm}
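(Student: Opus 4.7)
Plan. Set $\PC := f_*\uk_{\widetilde X}[d_{\widetilde X}]$; this is a self-dual perverse sheaf, since $f$ is semi-small with smooth domain. Define the local system $\LC_\lambda' := \HC^{-d_\lambda}(i_\lambda^*\PC)$, which is dual to $\LC_\lambda$ via the self-duality of $\PC$. The canonical natural transformation $i_\lambda^! \to i_\lambda^*$, passed to the appropriate cohomology sheaves, yields a morphism of local systems $\phi_\lambda : \LC_\lambda \to \LC_\lambda^\vee$ on each stratum.

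The first step is to identify $\phi_\lambda$ with the intersection form $B_\lambda$. For any $x \in X_\lambda$, restricting to a normal slice $N_\lambda$ places us in the setting of \S\ref{subsec-sspoint}, and Corollary \ref{cor-inducedmorph} identifies the stalk $(\phi_\lambda)_x$ with the map $\HBM_{d_X-d_\lambda}(F_\lambda) \to \HBM_{d_X-d_\lambda}(F_\lambda)^\vee$ determined by $B_\lambda$. The non-degeneracy hypothesis therefore forces $\phi_\lambda$ to be an isomorphism of local systems on every stratum.

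The main decomposition is then obtained by induction on $\#\{\lambda : X_\lambda \subset \supp \PC\}$. Pick a stratum $X_\lambda$ closed in $X$, set $U := X\setminus X_\lambda$, $i := i_\lambda$ and $j : U \hookrightarrow X$. Use $i^!$-adjunction to lift the identity of $\LC_\lambda$ to a morphism $\alpha : i_*\LC_\lambda[d_\lambda] \to \PC$, and dually (using $\DM \PC \cong \PC$ and $\LC_\lambda' \cong \LC_\lambda^\vee$) construct $\beta : \PC \to i_*\LC_\lambda[d_\lambda]$. A routine adjunction calculation identifies $\beta \circ \alpha \in \End(i_*\LC_\lambda[d_\lambda])$ with $\phi_\lambda$; since $\phi_\lambda$ is an isomorphism, $\alpha$ admits a retraction, giving $\PC \cong i_*\LC_\lambda[d_\lambda] \oplus \QC = \ic(\LC_\lambda) \oplus \QC$ (using $X_\lambda$ closed in $X$). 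The complement satisfies $\QC|_U \cong \PC|_U$ and has no direct summand supported on $X_\lambda$. Applying the inductive hypothesis to $\PC|_U$ --- the direct image of the restricted semi-small map $f^{-1}(U) \to U$, whose stratum-wise intersection forms are again the $B_\mu$ for $\mu \neq \lambda$ --- and using openness of $U$ to identify restrictions of ICs with ICs of restrictions then finishes the decomposition.

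For the second assertion: $\ic(\LC_\lambda)$ is a simple perverse sheaf if and only if $\LC_\lambda$ is a simple local system, and ICs with distinct support closures are pairwise non-isomorphic; hence $\bigoplus_\lambda \ic(\LC_\lambda)$ is a direct sum of simple perverse sheaves --- precisely the conclusion of the full Decomposition Theorem --- if and only if every $\LC_\lambda$ is semi-simple. The main technical hurdle will be the adjunction calculation identifying $\beta \circ \alpha$ with $\phi_\lambda$: this requires carefully tracking the units and counits of the $(i_*, i^!)$ and $(i^*, i_*)$ adjunctions and using the fact that their composition is exactly the canonical natural transformation $i^! \to i^*$ whose top cohomology gives $\phi_\lambda$.
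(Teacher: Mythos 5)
Your overall strategy coincides with the paper's: identify the morphism $D_\lambda : \LC_\lambda \to \LC_\lambda^\vee$ (your $\phi_\lambda$) stalk-wise with the intersection form via Corollary~\ref{cor-inducedmorph}, deduce from non-degeneracy that $\ic(\LC_\lambda) \cong i_{\lambda*}\LC_\lambda[d_\lambda]$ splits off for a closed stratum $X_\lambda$, and induct. The second assertion is handled the same way.

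However, there is a genuine gap in the inductive step. After writing $\PC \cong \ic(\LC_\lambda) \oplus \QC$ you assert only that $\QC|_U \cong \PC|_U$ and that $\QC$ ``has no direct summand supported on $X_\lambda$''. Knowing the restriction of a perverse sheaf to the open part $U$, together with the absence of direct summands on the closed complement, does \emph{not} determine it: a non-split extension of $\ic(0)$ (supported on $X_\lambda$) by one of the $\ic(\LC_\mu)$ would have exactly these two properties and would block the decomposition. What you actually need is that $\QC$ is the \emph{intermediate extension} $j_{!*}j^*\PC$, i.e.\ that $\QC$ has no perverse sub- or quotient-object supported on $X_\lambda$. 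This is stronger than ``no summand'' and is precisely what the paper establishes in Proposition~\ref{prop:extend}: from the decomposition $\PC \cong i_{\lambda*}\LC_\lambda[d_\lambda] \oplus \QC$ one gets $\HC^m(i_\lambda^* \QC) = 0$ for $m \ge -d_\lambda$ (since the stalk local system in degree $-d_\lambda$ is entirely accounted for by the $\ic(\LC_\lambda)$ summand), and then self-duality of $\QC$ gives the dual vanishing; the BBD characterisation of $j_{!*}$ (\cite[Prop.~2.1.9]{BBD}) then forces $\QC \cong j_{!*}j^*\PC$. Only with this identification in hand does the inductive decomposition of $j^*\PC$ lift to a decomposition of $\QC$ on all of $X$. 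You should add this step; once it is in place the argument closes.

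A smaller point: your claim that $\beta \circ \alpha$ ``is identified with $\phi_\lambda$'' is imprecise type-wise ($\phi_\lambda : \LC_\lambda \to \LC_\lambda^\vee$ is not an endomorphism). What the adjunction calculation actually shows is that, under the identifications $\Hom(i_*\LC_\lambda[d_\lambda],\PC) \cong \Hom(\LC_\lambda,\LC_\lambda)$ and $\Hom(\PC, i_*\LC_\lambda[d_\lambda]) \cong \Hom(\LC_\lambda^\vee,\LC_\lambda)$, composition factors through $D_\lambda$; non-degeneracy of $D_\lambda$ then lets you choose $\beta$ so that $\beta\circ\alpha$ is invertible. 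The paper packages this via the general Krull--Schmidt multiplicity pairing of \S\ref{subsub-KS}, which avoids fixing a specific pair $\alpha,\beta$, but your more hands-on version is fine once phrased correctly.
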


\begin{remark}
It is a deep result of de Cataldo and Migliorini \cite{dCM} that in fact, these
intersection forms are definite (and hence non-degenerate)
over $\QM$. The semi-simplicity of
each $\LC_{\lambda}$ over a field of characteristic zero
is much more straightforward (and is also pointed out in \cite{dCM}):
For relevant strata $X_{\lambda}$
the stalks of $\LC_{\lambda}$ 
have a basis at any point $x \in X_{\lambda}$ consisting of the irreducible components of
maximal dimension in the fibre $f^{-1}(s)$. The monodromy action
permutes these components. Hence each local system factors through a
representation of a finite group, and hence is semi-simple.

Remember that here, we do not assume condition \eqref{assump-parity}. Otherwise,
the semi-simplicity of the local systems would be automatic.
\end{remark}

Assume that $X_{\lambda}$ is a closed stratum and set 
$\FC := f_* \uk_{\widetilde{X}}[d_{\widetilde{X}}]$.
For any $\LC \in \Loc(X_{\lambda})$ we are
interested in the pairing 
\[ \Hom(i_{\lambda *} \LC[d_{\lambda}], \FC )
\times \Hom(\FC, i_{\lambda*}
\LC[d_{\lambda}]) \to \End(i_{\lambda*}\LC[d_{\lambda}]). \]
Applying two adjunctions on each side, this is equivalent to
determining the pairing
\[
\Hom(\LC, \LC_{\lambda}) \times
\Hom(  \LC_{\lambda}^{\vee}, \LC)
\to \End(\LC) \]
where, given  morphisms
\begin{gather*}
\LC \stackrel{f}{\to} \LC_{\lambda} \text{ and }
\LC_{\lambda}^{\vee} \stackrel{g}{\to} \LC
\end{gather*}
their pairing is given by the composition
\[
\LC \stackrel{f}{\to}
\LC_{\lambda} \to i_{\lambda}^! \FC [- d_\lambda] \to 
i_{\lambda}^* \FC [- d_\lambda]\to  \LC_{\lambda}^{\vee}
\stackrel{g}{\to} \LC
\]
where all morphisms except $f$ and $g$ are canonical. Hence it is
important to understand the morphism
\begin{equation}
\label{eq:intformmorph}
D_{\lambda} :  \LC_{\lambda} \to \LC_{\lambda}^{\vee} . \end{equation}

In the following lemma (and its proof), we use the notations in the diagram \eqref{diag-gccart}.

\begin{lem}\label{lem:schnittform}
Given $ x \in X_{\lambda}$ as above, the 
stalk of $D_{\lambda}$ at $x$ may be canonically identified 
with the morphism
\[
\HBM_{d_X - d_{\lambda}}(F_{\lambda}) \to 
\HBM_{d_X - d_{\lambda}}(F_{\lambda})^*
\]
induced by the intersection form associated to $X_{\lambda}$. \end{lem}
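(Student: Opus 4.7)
The plan is to reduce the statement to Corollary \ref{cor-inducedmorph} by passing to the normal slice $N_\lambda$ at $x$.

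First, I would apply proper base change to the Cartesian diagram \eqref{diag-gccart} (with $j : N_\lambda \hookrightarrow X$ and $j' : \widetilde{N}_\lambda \hookrightarrow \widetilde{X}$ the inclusions) to obtain
\[
j^* \FC \cong \pi_* \uk_{\widetilde{N}_\lambda}[d_{\widetilde{X}}].
\]
Since $\pi$ is proper, surjective, semi-small, with $\widetilde{N}_\lambda$ smooth of complex dimension $d_X - d_\lambda$ (and the dimension of the fibre $F_\lambda$ at $x$ bounded by $(d_X - d_\lambda)/2$), this puts us in exactly the situation of Section \ref{subsec-sspoint}, with $2n = d_X - d_\lambda$.

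Second, I would use the transversality of the normal slice to identify the stalks of $i_\lambda^! \FC$ and $i_\lambda^* \FC$ at $x$ with $i_x^!(j^* \FC)$ and $i_x^*(j^* \FC)$ respectively (where $i_x : \{x\} \hookrightarrow N_\lambda$). Concretely, locally on a neighbourhood of $x$ we may trivialise the stratification as a product $X \cong X_\lambda \times N_\lambda$, so that $i_\lambda$ factors as $\id \times i_x$ and $\FC$ agrees with $\mathrm{pr}_{N_\lambda}^*(j^*\FC)$. A direct computation using $\mathrm{pr}_{N_\lambda}^! = \mathrm{pr}_{N_\lambda}^*[2d_\lambda]$ and $q^! = q^*[2d_\lambda]$ (for $q : X_\lambda \to \{x\}$) yields $i_\lambda^! \FC \cong q^* i_x^!(j^*\FC)$ on this neighbourhood, and similarly for $i_\lambda^*$. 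Moreover, the canonical natural transformation $i_\lambda^! \to i_\lambda^*$ pulls back to the canonical $i_x^! \to i_x^*$ under these identifications.

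Third, taking the shift into account ($d_{\widetilde{X}} = 2n + d_\lambda$), the stalk at $x$ of the morphism $D_\lambda : \HC^{-d_\lambda}(i_\lambda^! \FC) \to \HC^{-d_\lambda}(i_\lambda^* \FC)$ is identified with
\[
H^{0}\bigl(i_x^! \pi_* \uk_{\widetilde{N}_\lambda}[2n]\bigr) \longrightarrow H^{0}\bigl(i_x^* \pi_* \uk_{\widetilde{N}_\lambda}[2n]\bigr),
\]
which Corollary \ref{cor-inducedmorph} identifies with the intersection form map $\HBM_{d_X - d_\lambda}(F_\lambda) \to \HBM_{d_X - d_\lambda}(F_\lambda)^*$ associated to $F_\lambda \hookrightarrow \widetilde{N}_\lambda$. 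This is the intersection form attached to $X_\lambda$ by definition, completing the proof.

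The main obstacle is justifying the normal slice identification in the second step — specifically, producing the canonical isomorphisms between $i_\lambda^?\FC$ at $x$ and $i_x^?(j^*\FC)$ compatibly for $? \in \{*,!\}$ and the canonical natural transformation between them. This requires a standard but slightly delicate verification using the Whitney condition (via Thom's first isotopy lemma) to get the local product structure, together with careful bookkeeping of the shifts coming from the smooth relative dualities for $\mathrm{pr}_{N_\lambda}$ and $q$.
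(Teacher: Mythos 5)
Your argument is essentially the paper's own: both proofs reduce to the local product situation $X \cong X_\lambda \times N_\lambda$ (and $\widetilde{X} \cong X_\lambda \times \widetilde{N_\lambda}$) near $x$, identify the stalk of $D_\lambda$ with the map $\HC^0(i^!\pi_*\uk_{\widetilde{N_\lambda}}[d_{\widetilde{X}}-d_\lambda]) \to \HC^0(i^*\pi_*\uk_{\widetilde{N_\lambda}}[d_{\widetilde{X}}-d_\lambda])$, and invoke Corollary~\ref{cor-inducedmorph}. The paper simply asserts the product reduction ``without loss of generality,'' invoking the algebraic stratification hypothesis, whereas you spell out the shift bookkeeping and the verification that $\FC$ is pulled back from $N_\lambda$ — a more explicit version of the same step, not a different route.
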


\begin{proof} Without loss of generality we may assume that $X_{\lambda} =
  U$, $X = N_{\lambda} \times U$ and $\widetilde{X} =
  \widetilde{N_{\lambda}} \times U$, for some contractible open
  subset $U \subset \CM^{d_{\lambda}}$. It follows that the stalk of
  $D_{\lambda}$ may be identified with the morphism
\[
\HC^0 (i^! \pi_*\uk_{\widetilde{N_{\lambda}}}[d_{\widetilde{X}}-d_{\lambda}]) \to 
\HC^0 (i^* \pi_* \uk_{\widetilde{N_{\lambda}}} [d_{\widetilde{X}} - d_{\lambda}])
\]
in which case the result follows from Corollary \ref{cor-inducedmorph}.
\end{proof}

Applying the adjunction 
$(-\otimes \LC_{\lambda}, - \otimes \LC_{\lambda}^{\vee})$ to
$D_{\lambda}$ we obtain a morphism
\[
B_{\lambda} : \LC_{\lambda} \otimes \LC_{\lambda} \to \uk_{\lambda}
\]
and it follows from the above lemma that the stalk of this 
morphism at each point $x \in X_{\lambda}$ is given by the intersection form
on $\HBM_{d_{X} - d_{\lambda}}(\pi^{-1}(x))$.

Let $j$ denote the open inclusion of the complement of $X_\lambda$. We
are now in a position to prove:

\begin{prop} \label{prop:extend} We have that 
$i_{{\lambda}*}\LC_{\lambda}[d_\lambda]$ is a direct summand of $f_* \uk_{\widetilde{X}}[d_{\widetilde{X}}]$
  if and only if the intersection form associated to $X_{\lambda}$ is
  non-degenerate. If this is the case we have an isomorphism
\begin{equation*}
f_* \uk_{\widetilde{X}}[d_{\widetilde{X}}]
\simeq
i_{\lambda *}\LC_{\lambda}[d_\lambda] \oplus j_{!*} j^* f_* \uk_{\widetilde{X}}[d_X].
\end{equation*}
\end{prop}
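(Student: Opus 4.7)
The plan is to leverage the canonical composition pairing of Section~\ref{subsub-KS} with $a := i_{\lambda *}\LC_\lambda[d_\lambda]$ and $b := \FC := f_* \uk_{\widetilde{X}}[d_{\widetilde{X}}]$. Since $\FC$ is perverse (as the proper push-forward under a semi-small map of a shifted constant sheaf on a smooth variety), we have $i_\lambda^* \FC \in \p D^{\leq 0}(X_\lambda)$ and $i_\lambda^! \FC \in \p D^{\geq 0}(X_\lambda)$; on the smooth stratum $X_\lambda$ this means $\HC^i(i_\lambda^* \FC) = 0$ for $i > -d_\lambda$ and $\HC^i(i_\lambda^! \FC) = 0$ for $i < -d_\lambda$. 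By definition $\HC^{-d_\lambda}(i_\lambda^! \FC) = \LC_\lambda$, while the preceding lemma identifies $\HC^{-d_\lambda}(i_\lambda^* \FC) = \LC_\lambda^\vee$. Combined with the adjunctions $(i_{\lambda *}, i_\lambda^!)$ and $(i_\lambda^*, i_{\lambda *})$ and perverse truncation, this yields canonical identifications
\[
\Hom(a, \FC) \;\cong\; \End(\LC_\lambda), \qquad \Hom(\FC, a) \;\cong\; \Hom(\LC_\lambda^\vee, \LC_\lambda).
\]
Under these, the morphism $\alpha : a \to \FC$ corresponding to $\id_{\LC_\lambda}$ and the canonical map $\beta : \FC \to i_{\lambda *}\LC_\lambda^\vee[d_\lambda]$ coming from top-cohomology truncation satisfy $\beta \circ \alpha = D_\lambda$ by the very definition of $D_\lambda$ in \eqref{eq:intformmorph}; hence the composition pairing $\Hom(a,\FC) \times \Hom(\FC, a) \to \End(a) \cong \End(\LC_\lambda)$ is $(\phi, \psi) \mapsto \psi \circ D_\lambda \circ \phi$.

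By the Krull--Schmidt discussion of Section~\ref{subsub-KS}, $a$ is a direct summand of $\FC$ precisely when $\id_{\LC_\lambda}$ lies in the image of this pairing, i.e., when there exist $\phi \in \End(\LC_\lambda)$ and $\psi \in \Hom(\LC_\lambda^\vee, \LC_\lambda)$ with $\psi \circ D_\lambda \circ \phi = \id$. Since $\LC_\lambda$ and $\LC_\lambda^\vee$ are local systems of free $k$-modules of the same rank, on each fibre this reduces to an identity $QDP = I$ among square matrices over the commutative ring $k$; taking determinants forces $\det(D)$ to be a unit, so $D_\lambda$ is itself an isomorphism. The converse is immediate: $\phi = \id$ and $\psi = D_\lambda^{-1}$ do the job. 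By the preceding lemma, $D_\lambda$ is an isomorphism precisely when the stalkwise intersection form, and hence $B_\lambda$, is non-degenerate, establishing the equivalence.

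Assume now that $D_\lambda$ is an isomorphism. Take $\gamma \in \Hom(\FC, a)$ corresponding to $D_\lambda^{-1}$; then $\gamma \circ \alpha = \id_a$, giving a perverse splitting $\FC \simeq a \oplus \FC'$ with $\FC' := \ker \gamma$. Since $j^* a = 0$ we have $j^* \FC' = j^* \FC$, so it suffices to verify the two conditions $\HC^{-d_\lambda}(i_\lambda^* \FC') = 0$ and $\HC^{-d_\lambda}(i_\lambda^! \FC') = 0$ characterising $j_{!*} j^* \FC$. Applying $i_\lambda^*$ to the splitting and reading off the top cohomology sheaf, the decomposition $\LC_\lambda^\vee = \im\bigl(\HC^{-d_\lambda}(i_\lambda^* \alpha)\bigr) \oplus \HC^{-d_\lambda}(i_\lambda^* \FC')$ has first morphism equal to $D_\lambda$ (using $\beta \circ \alpha = D_\lambda$), which is an isomorphism; hence the second summand vanishes. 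The analogous statement for $i_\lambda^!$ follows from $\HC^{-d_\lambda}(i_\lambda^! \alpha) = \id_{\LC_\lambda}$, which is built into the construction of $\alpha$. The main technical point in the argument is the identification $\beta \circ \alpha = D_\lambda$; once this dictionary between geometry and the Hom-algebra of $\LC_\lambda$ is in place, the equivalence and the splitting follow by formal manipulation of perverse truncations and adjunctions.
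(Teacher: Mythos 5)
Your proof is correct and follows the same overall strategy as the paper: identify the composition pairing $\Hom(a,\FC) \times \Hom(\FC,a) \to \End(a)$ with the pairing $(\phi,\psi) \mapsto \psi \circ D_\lambda \circ \phi$ on $\End(\LC_\lambda) \times \Hom(\LC_\lambda^\vee,\LC_\lambda)$ via adjunction, perversity of $\FC$, and truncation, and then read off the splitting. Two small points of comparison. First, the paper asserts without comment that the splitting criterion ``$a$ is a direct summand'' is \emph{equivalent} to $D_\lambda$ being an isomorphism; your determinant argument (from $\psi D_\lambda \phi = \id$ among square matrices over $k$, $\det D_\lambda$ is a unit, hence $D_\lambda$ is invertible on each stalk and so as a map of local systems) is precisely the clean justification of the non-obvious direction, and is worth recording. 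Second, for identifying the complement $\FC'$ with $j_{!*} j^* \FC$, the paper gets the vanishing $\HC^{-d_\lambda}(i_\lambda^*\FC') = 0$ by a rank count (splitting $\LC_\lambda^\vee$ off $\LC_\lambda$, which has the same rank) and then deduces the $!$-condition from the self-duality of $\FC'$, invoking \cite[Prop.~2.1.9]{BBD}. You instead verify both the $*$- and $!$-conditions directly from the splitting and the identities $\HC^{-d_\lambda}(i_\lambda^*\alpha) = D_\lambda$ and $\HC^{-d_\lambda}(i_\lambda^!\alpha) = \id_{\LC_\lambda}$. Both routes are valid: the paper's self-duality shortcut avoids analyzing $i_\lambda^!$ at all, while your version is more explicit and makes manifest exactly which piece of the top cohomology the summand $a$ captures.
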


Note that Theorem \ref{thm-intformsgeneral} now follows by a simple
induction over the stratification.

\begin{proof} The above discussion shows that
$i_{\lambda*}\LC_{\lambda}[d_\lambda]$
is a direct summand of
$f_* \uk_{\widetilde{X}}[d_{\widetilde{X}}] \Iff D_{\lambda}$
is an isomorphism $\Iff B_{\lambda}$ is non-degenerate $\Iff$ 
the intersection form associated to $X_{\lambda}$ is non-degenerate.

Now assume that $i_{\lambda*} \LC_{\lambda}[d_\lambda]$
is a direct summand of $f_* \uk_{\widetilde{X}}[d_X]$
and write
$f_* \uk_{\widetilde{X}}[d_X] \simeq i_{\lambda *}\LC_{\lambda}[d_\lambda] \oplus \FC$
for some perverse sheaf $\FC$. Then $\FC$ is necessarily self-dual
because $f_* \uk_{\widetilde{X}}[d_X]$ and $i_{\lambda*}\LC_{\lambda}[d_\lambda]$ are. Also
$\mathcal{H}^{m}(i_{\lambda}^* \FC) = 0$ for $m \ge
-d_{\lambda}$. Hence $\FC \cong j_{!*} j^* f_*
\uk_{\widetilde{X}}[d_X] $ by the characterisation of $j_{!*}$ given
in \cite[Proposition 2.1.9]{BBD}. \end{proof}

\subsection{Decomposing parity sheaves}
\label{subsec-decompparity}

In this section we keep the notation from the previous section, but
remove the semi-small assumption on $f$ and assume instead that our
stratified variety $X$ satisfies \eqref{assump-parity} and that $f$ is
even. Recall that the parity assumption implies that, if it exists,
the parity sheaf $\EC(\lambda, \LC)$ corresponding to an irreducible
local system $\LC \in \Loc(X_{\lambda})$ is well-defined up to
isomorphism.

Let $f$ be a proper, surjective, even morphism:
\[
f : \widetilde{X} \to X.
\]
 It follows that 
$f_*\uk_{\widetilde{X}}[d_{\widetilde{X}}]$ is a parity complex, and
hence may be decomposed into a direct 
sum of parity sheaves
\[
f_*\uk_{\widetilde{X}}[d_{\widetilde{X}}] \cong \bigoplus \EC(\lambda, \LC)[-n] ^{\oplus
m_n(\lambda, \LC)}.
\]
In this section we consider the problem of determining $m_n(\lambda, \LC) \in \NM$.

\begin{remark}
\label{rem-existence}
While we do not assume that $\EC(\lambda,\LC)$ exists for all pairs
$(\lambda,\LC)$, all the indecomposable summands of the
direct image are parity sheaves and thus existence will
follow for any pair occuring with non-zero multiplicity. Moreover, if
$f$ is semi-small, the multiplicities $m_n(\lambda, \LC) =0$ for $n
\ne 0$ and any $\EC(\lambda,\LC)$ which occurs in the direct image is
perverse.
\end{remark}

Define\footnote{We use the notation $\LC^\bullet_\l$ to emphasise the fact that
$\LC^\bullet_\l$ is not necessarily concentrated in one degree, as was
the case in the previous section.}
\[
\LC^\bullet_\l := i_\l^! f_* \uk_{\widetilde{X}}[d_{\widetilde{X}}].
\]
Because $f$ is even, if we define
\[
\LC_\l^n := \HC^n(\LC_\l^\bullet)
\]
then $\LC_\l^n$ is zero if $n \not \equiv d_{\widetilde{X}} (\textrm{mod
  2})$, and we have an isomorphism (which we fix)
\begin{equation} \label{eq:decomp1}
\LC_\l^\bullet \cong \bigoplus \LC_\l^n [-n].
\end{equation}
Note that $i_\l^*f_*\uk_{\widetilde{X}}[d_{\widetilde{X}}] \cong \DM
i_\l^!f_*\uk_{\widetilde{X}}[d_{\widetilde{X}}]$ because $f$ is proper
and $\widetilde{X}$ is smooth (and so
$\uk_{\widetilde{X}}[d_{\widetilde{X}}]$ is self-dual). Hence
\begin{equation} \label{eq:decomp2}
i_\l^*f_* \uk_{\widetilde{X}}[d_{\widetilde{X}}] \cong \DM\LC^\bullet
\cong \bigoplus (\LC_\l^n)^{\vee} [2d_\l + n].
\end{equation}
As in the previous section we are interested in the canonical morphism
\[
\LC^\bullet_\l = i^!f_* \uk_{\widetilde{X}}[d_{\widetilde{X}}] \stackrel{D_\l}{\longrightarrow}
i^* f_* \uk_{\widetilde{X}}[d_{\widetilde{X}}] \cong \DM \LC_{\lambda}^\bullet.
\]
Taking cohomology and using the decompositions
\eqref{eq:decomp1} and \eqref{eq:decomp2} we get maps
\[
D_\lambda^n := \HC^{n}(D_\lambda) : \LC_\l^n \to (\LC_\l^{-2d_\l-n})^{\vee}.
\]

\begin{remark} One may interpret the maps $D_\l^n$ in
  terms of an intersection form as follows. Fix a point $x
  \in X_\l$, and let $F_\l$ denote the fibre of $f$ over $x \in
  X$. Then, one may identify
  the stalk of $\LC_\l^n$ (resp. of $(\LC_\l^{-2d_\l-n})^{\vee}$) at $x$ with 
$\HBM_{d_{\widetilde{X}} - n - 2d_\l} (F_\l)$
(resp. $\HBM_{d_{\widetilde{X}}  + n} (F_\l)^*$). Then, using similar
arguments to the proof of Lemma \ref{lem:schnittform} one may
identify $D_\l^n$ with the map
\[
\HBM_{d_{\widetilde{X}} - n - 2d_\l} (F_\l) \to \HBM_{d_{\widetilde{X}}  + n} (F_\l)^*
\]
induced by the intersection form $
\HBM_{d_{\widetilde{X}} - n - 2d_\l} (F_\l) \times
\HBM_{d_{\widetilde{X}}  + n} (F_\l) \to k.$

The following theorem shows that knowledge of these intersection
forms allows one to decompose $f_* \uk_{\widetilde{X}} [
d_{\widetilde{X}}]$ into parity sheaves.
\end{remark}

\begin{thm}
\label{thm-paritydecomp}
We have an isomorphism
\[
f_* \uk_{\widetilde{X}}[d_{\widetilde{X}}] \cong \bigoplus_{\l \in \L;
  n \in \ZM} \EC(\lambda, \LC^n_{\lambda}/ \ker D^n_{\lambda})[-n-d_\l]. \]
In particular, the multiplicity $m_n(\lambda, \LC)$ of an indecomposable 
parity sheaf $\EC(\lambda, \LC)[-n]$ as a direct summand of $f_*
\uk_{\widetilde{X}}[d_{\widetilde{X}}]$ is equal to the multiplicity of
$\LC$ in $\LC^{n-d_\l }_{\lambda}/ \ker D^{n-d_\l}_{\lambda}$.
\end{thm}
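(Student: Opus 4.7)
Since $\FC := f_*\uk_{\widetilde{X}}[d_{\widetilde{X}}]$ is perverse (by semi-smallness of $f$) and parity (by Proposition~\ref{prop-evenpush}), Theorem~\ref{indecs} already gives a decomposition
\[ \FC \cong \bigoplus_{\lambda, \LC} \EC(\lambda, \LC)^{\oplus m(\lambda, \LC)}, \]
with $\LC$ running over indecomposable local systems on $X_\lambda$. The plan is to compute the multiplicities $m(\lambda, \LC)$ by induction on $|\Lambda|$, successively peeling off closed strata.

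Fix a closed stratum $X_\lambda$; the parity sheaves with support $X_\lambda$ are exactly $\EC(\lambda, \LC) = i_{\lambda*}\LC[d_\lambda]$ for $\LC$ indecomposable in $\Loc(X_\lambda)$. By the framework of Section~\ref{subsub-KS}, the multiplicity $m(\lambda, \LC)$ equals the rank of the modular reduction of the composition pairing
\[ \Hom(\EC(\lambda, \LC), \FC) \times \Hom(\FC, \EC(\lambda, \LC)) \to \End(\EC(\lambda, \LC)). \]
Using adjunction together with the identifications $\HC^{-d_\lambda}(i_\lambda^! \FC) = \LC_\lambda$ and $\HC^{-d_\lambda}(i_\lambda^* \FC) = \LC_\lambda^\vee$ (the latter obtained from Verdier self-duality of $\FC$ combined with the formality of complexes with local-system cohomology on a stratum, which holds thanks to the parity assumption~\eqref{assump-parity}), one obtains
\[ \Hom(\EC(\lambda, \LC), \FC) \cong \Hom(\LC, \LC_\lambda), \qquad \Hom(\FC, \EC(\lambda, \LC)) \cong \Hom(\LC_\lambda^\vee, \LC). \]
Following exactly the argument of Proposition~\ref{prop:extend}, the composition pairing takes the form $(f, g) \mapsto g \circ D_\lambda \circ f$, where $D_\lambda: \LC_\lambda \to \LC_\lambda^\vee$ is the morphism induced by $B_\lambda$. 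By definition, $\LC_\lambda / \rad B_\lambda$ is canonically isomorphic to the image of the modular reduction of $D_\lambda$, so the rank of the pairing is precisely the multiplicity of $\LC$ in $\LC_\lambda / \rad B_\lambda$, as desired.

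To close the induction, split off the summand $\EC(\lambda, \LC_\lambda / \rad B_\lambda)$ from $\FC$ and restrict the remainder to $X \setminus X_\lambda$. The restricted morphism $f^{-1}(X \setminus X_\lambda) \to X \setminus X_\lambda$ is again proper, surjective, semi-small and even --- all being conditions on fibres over strata that persist under the restriction --- while the local systems $\LC_\mu$ and forms $B_\mu$ for $\mu \neq \lambda$ are defined locally on $X_\mu$ and so are unchanged. The inductive hypothesis then applies.

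The main obstacle is the identification of the composition pairing with the morphism $D_\lambda$: this requires a careful passage through adjunctions and Verdier duality, together with the verification that formality gives the expected description of $i_\lambda^*\FC$ and $i_\lambda^!\FC$. Fortunately this computation is essentially the one already carried out in Sections~\ref{subsec-sspoint} and~\ref{subsec-dt} for the Decomposition Theorem itself; the only novelty here is that it must be transported from the language of splitting off direct summands to that of reading off multiplicities of individual parity sheaves via the Krull-Schmidt framework of Section~\ref{subsub-KS}.
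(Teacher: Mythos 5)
Your proof is correct and follows essentially the same route as the paper's: reduce to the closed stratum case, identify the composition pairing with the morphism $D_\lambda$ (equivalently the form $B_\lambda$), split off the relevant piece, and induct by restricting to the open complement. The key computation --- that the pairing between $\Hom(\LC,\LC_\lambda)$ and $\Hom(\LC_\lambda^\vee,\LC)$ is given by composing with $D_\lambda$ --- is exactly the one the paper performs in Section~\ref{subsec-dt}.

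The one genuinely different (and slightly weaker) step is the final multiplicity count. You invoke the Krull--Schmidt framework of Section~\ref{subsub-KS} to read off $m(\lambda,\LC)$ as the rank of the modular reduction of the pairing; but that framework is stated under the hypothesis $\End(a)=k$, i.e.\ $\End(\LC)=k$, which need not hold for an arbitrary indecomposable local system over a general complete local ring. The paper avoids this issue altogether: it first uses assumption~\eqref{assump-parity} to split the sequence $\rad B_\lambda \hookrightarrow \LC_\lambda \twoheadrightarrow \LC_\lambda/\rad B_\lambda$, obtaining a decomposition $\LC_\lambda\cong \rad B_\lambda\oplus \LC_\lambda/\rad B_\lambda$ on which $B_\lambda$ restricts to a non-degenerate form; Proposition~\ref{prop:extend} then splits off $i_{\lambda*}(\LC_\lambda/\rad B_\lambda)[d_\lambda]$ as a whole, with no reference to individual multiplicities or endomorphism rings. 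The stated multiplicity formula is then an immediate corollary of the isomorphism rather than an independent computation. You should either record the same splitting of $\LC_\lambda$ (which costs nothing and removes the dependence on $\End(\LC)=k$) or justify that the relevant endomorphism rings reduce to $k$ in the cases at hand.

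One minor point: after citing Verdier self-duality, you should note explicitly that $i_\lambda^!\FC$ decomposes as a direct sum of shifted local systems because $\FC$ is parity (Remark~\ref{rmk-parity}(2)); this is what lets you pass the duality through to get $\HC^{-d_\lambda}(i_\lambda^*\FC)\cong \LC_\lambda^\vee$ cleanly, and the perversity of $\FC$ (from semi-smallness) ensures no other cohomological degree contributes.
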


\begin{remark}
  If $f$ is semi-small then $\LC_\l^n$ is zero for $n > -d_\l$ and
  hence $D_\l^n = 0$ unless $n = -d_\l$. In this case 
  Theorem \ref{thm-paritydecomp} gives an isomorphism
\[
f_* \uk_{\widetilde{X}}[d_{\widetilde{X}}] \cong \bigoplus_{\l \in \L}
\EC(\lambda, \LC^{-d_\l}_{\lambda}/ \ker D^{-d_\l}_{\lambda}). \]
Assumption \eqref{assump-parity} guarantees that each local system
$\LC^{-d_\l}_{\lambda}/ \ker D^{-d_\l}_{\lambda}$ is
semi-simple. Hence the Decomposition Theorem is true
if and only if each $D_\l^{-d_\l}$ is an isomorphism, which is the
case if and only if each intersection form is
non-degenerate (by Lemma \ref{lem:schnittform}). We have already seen this result in Theorem
\ref{thm-intformsgeneral} in the context of an arbitrary semi-small
map. The advantage of the above theorem is that it explains (in the
restricted context of varieties satisfying the parity assumptions) how to
decompose $f_* \uk_{\widetilde{X}}[d_{\widetilde{X}}]$ when the
Decomposition Theorem fails.
\end{remark}

\begin{proof} By Proposition \ref{prop-restriction} it is enough to
  prove that, if $i_\l : X_{\lambda} \into X$ is the inclusion of a
  closed stratum, then
\[
f_* \uk_{\widetilde{X}}[d_{\widetilde{X}}] \cong \bigoplus ( i_{\l*} (
\LC^n_{\lambda}/ \ker D^n_{\lambda})[-n]) \oplus \GC
\]
where $\GC$ is a parity complex having no direct summands supported on
$X_\l$. However this is an immediate consequence of Proposition
\ref{prop:decompwithsupport}.
\end{proof}

\begin{remark} \label{rem:Odecomp}
In this section we have only considered the case of field
coefficients. However using \eqref{eq:mod hom parity} and idempotent
lifting (e.g. \cite[Theorem 12.3]{Feit}) one can show that if $\OM$ is
a complete discrete valuation ring with residue field $\FM$ and $\LC
\in \Loc(X_{\lambda}, \OM)$ (see Section ~\ref{subsec-naa}), then the graded multiplicity of
$\EC(\lambda,\LC)$ in 
$f_*\underline{\OM}_{\widetilde{X}}[d_{\widetilde{X}}]$ is equal to
the graded multiplicity of $\EC(\lambda,\FM \LC)$ in
$f_*\underline{\FM}_{\widetilde{X}}[d_{\widetilde{X}}]$. Hence the
results of this section also yield multiplicities with coefficients in
$\OM$.
\end{remark}

\section{Applications}
\label{sec-examples}

\subsection{(Kac-Moody) Flag varieties}
\label{subsec-KacMoody}

In this section we show the existence and uniqueness of parity sheaves
on Kac-Moody flag varieties. Throughout we only work with the trivial pariversity $\dagger = \natural$. 
The reader unfamiliar with Kac-Moody
flag varieties may keep the important case of a (finite) 
flag variety in mind. The standard reference for Kac-Moody Schubert
varieties is \cite{Ku}.

We begin by fixing some notation, which is identical to that of \cite{Ku}.
Let $A$ be a generalised Cartan matrix
of size $l$ and let $\gg(A)$ denote the corresponding 
Kac-Moody Lie algebra with Weyl group $W$, Bruhat order $\le$, length
function $\ell$ and 
simple reflections $S = \{ s_i\}_{i=1, \dots l}$.
To $A$ one may also associate a 
Kac-Moody group $\GC$ and subgroups $N$, $\BC$ and $\TC$ 
with  $\BC \supset \TC \subset N$.
Given any subset
$I \subset \{1, \dots, l \}$ one has a standard parabolic subgroup $\PC_I$
containing $\BC$ and a canonical Levi subgroup $\GC_I \subset \PC_I$.
The group $\TC$ is a connected
algebraic torus, $\BC$, $N$, $\PC_I$, $\GC_I$ and $\GC$ are all
pro-algebraic groups and $(\GC, \BC, N, S)$ is a Tits system with Weyl
group canonically isomorphic to $W$. The set $\GC / \PC_I$ may be given the structure of an 
ind-variety and is called a Kac-Moody flag variety.

Let $\hg_{\ZM}$ denote the lattice of cocharacters of $\TC$. Its dual
$\hg_{\ZM}^*$ may be identified with the lattice of characters of
$\TC$. In $\hg_{\ZM}^*$ one has the set $\Delta$ of roots, together with
a decomposition $\Delta = \Delta^+ \sqcup \Delta^-$ into the subsets of
positive and negative roots. Let $\Delta_{\re}$ denote the real
roots and $\Delta_{\re}^+ = \Delta_{\re} \cap \Delta^+$ and
$\Delta_{\re}^- = \Delta_{\re} \cap \Delta^-$ denote the positive and
negative real roots respectively. Finally, given a subset $I \subset
\{ 1, \dots, l\}$ we have have subsets $\Delta_I$, $\Delta_{I,\re}$,
$\Delta_{I,\re}^+$ etc. consisting of those (positive, real) roots in
the span of simple roots indexed by $I$.

\begin{ex}
If $A$ is a Cartan matrix then $\gg(A)$ is a 
semi-simple finite-dimensional complex Lie algebra
and  $\GC$ is the semi-simple 
and simply connected complex linear algebraic group with
Lie algebra $\gg(A)$, $\BC$ is a Borel subgroup,
$\TC \subset \BC$ is a maximal torus, $N$ is the normaliser
of $\TC$ in $\GC$,  $\PC_I$ is a standard parabolic 
and $\GC/ \PC_I$ is a partial flag variety.
\end{ex}

\begin{ex}
\label{ex-Gr}
If $A$ is now a Cartan matrix of size $l-1$ and $\gg(A)$ is 
the corresponding Lie
algebra with semisimple simply-connected group $G$, one can obtain
a generalised Cartan matrix $\tilde{A}$ by 
adding an $l$-th row and column with the values:
\[a_{l,l}=2 , a_{l,j}=-\alpha_j(\theta^\vee),
a_{j,l}=-\theta(\alpha_j^\vee),\]
where $1\leq j\leq l-1$, $\alpha_i$ are the simple roots of
$\gg(A)$ and $\theta$ is the highest root. The corresponding Kac-Moody
Lie algebra $\gg(\tilde{A})$ (resp. group $\GC$) is the so-called
(untwisted) affine Kac-Moody Lie algebra (resp. group) defined
in~\cite[Chapter 13]{Ku}. 
It turns out that the associated Kac-Moody flag
varieties have an alternative description as partial affine
flag varieties. Let $\KC=\CM((t))$
denote the field of Laurent series and $\OC=\CM[[t]]$ the ring of
Taylor series. Then, for example, the sets $G(\KC)/\IC$ (the affine flag
variety) and
$G(\KC)/G(\OC)$ (the affine Grassmannian) may be given the structure of an ind-variety and are
isomorphic to the Kac-Moody flag variety $\GC/\PC_I$ for $I =
\emptyset$ and $I=\{1,\ldots,l-1\}$ respectively. Here $G(\KC)$
 (resp. $G(\OC)$) denotes the group of $\KC$ (resp. $\OC$)-points of $G$
and $\IC$ denotes
the Iwahori subgroup, defined as the inverse image of a Borel
subgroup $B \subset G$ under the evaluation $G(\OC) \to G$.
\end{ex}

Given a subset $I \subset \{1, \dots, l \}$
we denote by $A_I$ the submatrix of $A$ consisting 
of those rows and columns indexed by $I$. For any such $I$, $A_I$
is a generalised Cartan matrix. A subset 
$I \subset \{1, \dots, l \}$ 
is of {\bf finite type} if $A_I$ is a Cartan matrix.
Equivalently, the subgroup $W_I \subset W$ generated by the
simple reflections $s_i$ for $i \in I$ is finite. Below we will mostly
be concerned with subsets $I\subset \{1, \dots, l \}$ of finite 
type.\footnote{
Much of the theory that we develop below is also valid $\GC / \PC_J$ 
even when $J$  is not 
of finite type, but we will not make this explicit.}

For any two subsets $I, J \subset \{1, \dots, l \}$ of finite
type we define
\[
{}^IW^J := \{ w \in W \; | \; s_iw > w \text{ and }ws_j > w
\text{ for all }
i \in I, j \in J \}.
\]
The orbits of $\PC_I$ on $\GC/ \PC_J$ give rise to a Bruhat 
decomposition:
\[ \GC/ \PC_J = 
\bigsqcup_{w \in {}^IW^J} \PC_Iw\PC_J/\PC_J 
= \bigsqcup_{w \in {}^IW^J} {}^IX^J_w. \]
The Bruhat decomposition gives an 
algebraic stratification of $\GC/ \PC_J$.

If $I = \emptyset$ each ${}^IX^J_w$ is isomorphic
to an affine space of dimension $\ell(w)$. In general
the decomposition of ${}^IX^J_w$ into orbits under $\BC$ gives
a cell decomposition
\begin{equation} \label{eq-flagcelldecomp}
{}^IX^J_w = \bigsqcup_{x \in W_IwW_J \cap {}^{\emptyset}W^J}
\CM^{\ell(x)}. \end{equation}

In the following proposition we analyse the strata 
${}^IX^J_w$.

\begin{prop}
\label{prop-flagstrata}
Let $k$ be a ring.
\begin{enumerate}
\item The graded $k$-module $H^{\bullet}({}^IX^J_w, k)$ is torsion free and concentrated
in even degrees.
\item The same is true of $H^{\bullet}_{\PC_I}({}^IX^J_w, k)$
if all the torsion primes for $A_I$ are invertible in 
$k$.
\end{enumerate}
Moreover, any local system or $\PC_I$-equivariant local system on
${}^IX^J_w$ is constant.
\end{prop}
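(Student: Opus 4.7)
The plan is to realize each stratum ${}^IX^J_w$ as a homogeneous space for $\PC_I$, and then handle the three claims using, respectively, the affine paving \eqref{eq-flagcelldecomp}, the long exact sequence of a fibration, and Borel's Theorem~\ref{thm-BGtorsion}. Throughout, set $H := \PC_I \cap w \PC_J w^{-1}$ so that ${}^IX^J_w = \PC_I \cdot w\PC_J/\PC_J \cong \PC_I / H$, and write $H = L_H \ltimes U_H$ for its Levi decomposition, where $U_H$ is (pro-)unipotent and $L_H$ is the Levi subgroup of $H$.

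For (1) I would use \eqref{eq-flagcelldecomp} to filter ${}^IX^J_w$ by closed subsets whose successive open complements are the affine cells $\CM^{\ell(x)}$. Each such cell has cohomology concentrated in degree $0$ (equal to $k$) and Borel-Moore homology concentrated in degree $2\ell(x)$ (equal to $k$), so the long exact sequences for the filtration (equivalently, the spectral sequence of the filtration) collapse for parity reasons, and induction on the number of cells gives that $H^\bullet({}^IX^J_w,k)$ is free over $k$ and concentrated in even degree.

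For (3) I would analyse $H$. The unipotent radical $U_H$ is contractible and connected; $L_H$ is a Levi subgroup of $\PC_I$ containing the maximal torus $\TC$ with root system $\Phi_I \cap w\Phi_J$, and is therefore connected. Hence $H$ is connected. On the other hand $\PC_I$ itself is simply connected (its Levi is simply connected semisimple and the pro-unipotent radical is contractible). The long exact homotopy sequence of the principal $H$-bundle $\PC_I \to \PC_I/H$ then yields
\[
\pi_1(\PC_I) \longrightarrow \pi_1({}^IX^J_w) \longrightarrow \pi_0(H),
\]
whose outer terms vanish, so $\pi_1({}^IX^J_w) = 0$. In the equivariant setting, a $\PC_I$-equivariant local system on $\PC_I/H$ is determined by a representation of $\pi_0(H) = 1$ and so is constant.

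For (2) the homotopy equivalence $E\PC_I \times^{\PC_I} (\PC_I/H) \simeq BH$ gives
\[
H^\bullet_{\PC_I}({}^IX^J_w,k) \cong H^\bullet_H(\pt,k) \cong H^\bullet_{L_H}(\pt,k),
\]
the last isomorphism following from the contractibility of $U_H$. Since $\Phi_I \cap w \Phi_J$ is a $\ZM$-closed subsystem of $\Phi_I$, the subgroup $L_H$ is a regular reductive subgroup of the Levi of $\PC_I$, and therefore its torsion primes are among those for $A_I$. Under the hypothesis that all such primes are invertible in $k$, Theorem~\ref{thm-BGtorsion} identifies $H^\bullet_{L_H}(\pt,k)$ with a polynomial algebra on generators of even degree, which is free over $k$ and concentrated in even degree.

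The main technical obstacle is to set up the decomposition $H = L_H \ltimes U_H$ rigorously in the Kac-Moody/pro-algebraic setting, and in particular to verify that the root system of $L_H$ is the $\ZM$-closed subsystem $\Phi_I \cap w\Phi_J$ of $\Phi_I$ so that torsion primes for $L_H$ (including any contribution from $\pi_1(L_H)$) are controlled by the torsion primes for $A_I$. Once these structural facts are established, the three statements follow cleanly from the combinatorial, topological, and cohomological inputs described above.
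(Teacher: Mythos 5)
Your plan for parts (1) and (2), and for the $\PC_I$-equivariant part of the last claim, coincides with the paper's: (1) from the affine paving \eqref{eq-flagcelldecomp}, and (2) from the quotient equivalence $H^\bullet_{\PC_I}({}^IX^J_w) \cong H^\bullet_H(\pt)$ together with Theorem~\ref{thm-BGtorsion}, using that (the reductive part of) $H$ is a regular reductive subgroup whose torsion primes are among those of $A_I$.

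For the claim that ordinary local systems on ${}^IX^J_w$ are constant, however, you deviate from the paper and your argument has a gap. You invoke the fibration $H \to \PC_I \to \PC_I/H$ and assert that $\PC_I$ is simply connected ``because its Levi is simply connected semisimple.'' This is false in general: the Levi of $\PC_I$ is $L_I\cdot\TC$, where $\TC$ is the full torus of the Kac-Moody group, whose rank typically exceeds $|I|$; so $L_I\cdot\TC$ is reductive but not semisimple, and $\pi_1(L_I\cdot\TC)\cong X_*(\TC)/\ZM\Phi_I^\vee$ is generally nontrivial. (Already $\PC_{\{1\}}\subset SL_3$ has Levi isomorphic to $GL_2$.) The argument can be repaired --- since $\TC\subset H$ and $\pi_1(\TC)\to\pi_1(\PC_I)$ is onto, the map $\pi_1(H)\to\pi_1(\PC_I)$ is surjective, and the long exact sequence of the fibration together with $\pi_0(H)=1$ still yields $\pi_1(\PC_I/H)=0$ --- but as written the step does not hold. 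The paper instead deduces simple connectedness directly from the affine paving \eqref{eq-flagcelldecomp} via the long exact sequence of relative homotopy groups, which sidesteps the pro-group homotopy entirely and is the cleaner route. I would recommend either adopting that argument, or inserting the surjectivity of $\pi_1(H)\to\pi_1(\PC_I)$ explicitly rather than appealing to simple connectedness of $\PC_I$.
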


We begin with the following lemma:

\begin{lem}
\label{lem:pi1}
  For any two subsets $I, J \subset \{ 1, \dots, l\}$ of finite type and
  $w \in {}^IW^J$, the variety ${}^IX_w^J$ is simply connected.
\end{lem}

\begin{proof}
  For each subset $I \subset \{ 1, \dots, l \}$ of finite type, we can find
  a cocharacter $\l_I : \CM^\times \to \TC$ such that $\langle \l_I, \alpha
  \rangle = 0$ if $\a \in \Delta_{I,\re}$ and $\langle \l_I, \alpha \rangle
  > 0$ for all $\alpha \in \Delta_{\re}^+ \setminus \Delta_{I,\re}$. By working in
  suitable charts around each $\TC$-fixed point on ${}^IX^J_w$ one may
  show that, for all $x \in {}^I X^J_w$,  we have
\[
\lim_{\CM^\times \ni z \to 0} \l_I(z) \cdot x \in \GC_I w
    \PC_J / \PC_J.
\]
A similar argument shows that $\GC_I w\PC_J / \PC_J$ is fixed by $\l_I(\CM^\times) \subset
\TC$. It follows that $\GC_Iw\PC_J/\PC_J$ is a deformation retract of
${}^IX^J_w$.

Now $\GC_Iw\PC_J/\PC_J$ is isomorphic to a (finite)
partial flag variety for $\GC_I$. It is standard that
partial flag varieties are simply connected.\footnote{One possible proof: Every partial flag variety is isomorphic to the partial flag variety of a simply connected algebraic group. Now any homogeneous space with connected stabilisers for a simply connected group is simply connected, by the long exact sequence of homotopy groups for a fibration.} Hence
\[
\pi_1({}^IX^J_w) = \pi_1(\GC_Iw\PC_J/\PC_J) = \{ 1 \}
\]
as claimed.
\end{proof}

\begin{proof}
[Proof of Proposition \ref{prop-flagstrata}]
The first statement follows from the fact that
\eqref{eq-flagcelldecomp} provides an affine paving of 
${}^IX^J_w$. By Lemma \ref{lem:pi1} each  ${}^IX^J_w$ is simply
connected, and hence any local system on $^IX^J_w$ is constant.
Now if $H$ is the reductive part
of the stabiliser of a point in ${}^IX^J_w$ then $H$ is isomorphic
to a regular reductive subgroup of a semi-simple connected and 
simply connected algebraic group with Lie algebra $\gg(A_I)$. It
follows that any $\PC_I$-equivariant local system on ${}^IX^J_w$ is
constant. We also have
\[ H^{\bullet}_{\PC_I}({}^IX^J_w, \ZM)
\cong H^{\bullet}_{H}(\pt, \ZM). \]
By Theorem \ref{thm-BGtorsion} this has no $p$-torsion for $p$ not a
torsion prime for $A_I$ and the result follows. \end{proof}

For the rest of this section we fix a complete local principal ideal 
domain $k$.

Fix $I, J \subset \{1, \dots, l \}$ of finite type.
We consider the following situations:
\begin{gather} \label{KMFV-const}
\text{ $ X = \GC / \PC_J$, an ind-variety stratified by $\PC_I$-orbits;}
\\ \label{KMFV-equi}
\text{$X = \GC / \PC_J$, an ind-$\PC_I$-variety.} \end{gather}
 If we are in situation \eqref{KMFV-equi}, we make the following
 assumption:
\begin{equation} \label{assump-torsionprimes}
\begin{array}{c}
\text{for all subsets $K \subset \{ 1, \dots, l\}$ of finite type} \\
\text{the torsion primes of $A_K$ are invertible in $k$.}
\end{array}
\end{equation}
\begin{remark}
 Using the results of Section \ref{sec-torsion} the above
 assumption \eqref{assump-torsionprimes} may be easily read off the
 Cartan matrix $A$. For example, any complete local principal ideal
 domain $k$ in which 2, 3 and 5 are invertible will always satisfy the
 above assumption.
\end{remark}

In either case we let $D_{I}(\GC/ \PC_J) := D(X,k) = D(X)$ be 
as in Section \ref{subsec-naa}
(see also Section \ref{sec:ind}).
Proposition \ref{prop-flagstrata} shows
that the stratified ind-($\PC_I$)-variety $\GC/ \PC_Y$ satisfies
\eqref{assump-parity} and \eqref{assump-parity2}.
By Theorem \ref{indecs}, it
follows that there exists  up to isomorphism at most one parity sheaf
with support $\ov{{}^IX_w^J}$ for each $w\in {}^I W^J$.

The first aim of this section is to show:

\begin{thm}
\label{thm-flagparity} 
Suppose that we are in situation \eqref{KMFV-const} or \eqref{KMFV-equi}.
For each $w\in {}^I W^J$, there exists, up to isomorphism, one parity
sheaf $\EC(w) \in D_{I}(\GC/ \PC_Y)$ with support $\ov{{}^IX_w^J}$.
\end{thm}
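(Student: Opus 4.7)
The plan is to construct $\EC(w)$ as a direct summand of the pushforward of the constant sheaf along a Bott--Samelson resolution of $\ov{{}^I X_w^J}$. Uniqueness already follows from Theorem \ref{indecs}, so only existence is at stake, and the key tool is Proposition \ref{prop-evenpush} applied to such a resolution.

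Since $w \in {}^I W^J$, the product $w_I w$ (with $w_I$ the longest element of $W_I$) satisfies $\ell(w_I w) = \ell(w_I) + \ell(w)$. I would fix a reduced expression $(s_{j_1}, \dots, s_{j_m}, s_{i_1}, \dots, s_{i_n})$ for $w_I w$ whose first $m$ letters form a reduced expression for $w_I$, and form the Bott--Samelson variety
\[
\Bs \;:=\; \PC_{j_1} \times^{\BC} \cdots \times^{\BC} \PC_{j_m} \times^{\BC} \PC_{i_1} \times^{\BC} \cdots \times^{\BC} \PC_{i_n}/\BC \;\cong\; \PC_I \times^{\BC} \PC_{i_1} \times^{\BC} \cdots \times^{\BC} \PC_{i_n}/\BC.
\]
It is smooth of complex dimension $N = m+n = \dim {}^I X_w^J$, carries a natural left $\PC_I$-action, and the multiplication map $\pi : \Bs \to \GC/\PC_J$ is proper, $\PC_I$-equivariant, and has image exactly $\ov{{}^I X_w^J}$.

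Next I would equip $\Bs$ with the stratification by $\PC_I$-orbits, which are in bijection with subwords of the reduced expression and each of which is a Zariski-locally trivial affine-space bundle over $\PC_I/\BC$. With respect to this stratification on the source and the stratification by $\PC_I$-orbits on the target, $\pi$ is stratified, and I would verify that the fibres $F_{\lambda,\mu}$ of each stratum-to-stratum map admit affine pavings, using the classical Deodhar-type analysis of Bott--Samelson fibres over Bruhat cells. Combined with Proposition \ref{prop-flagstrata}, this shows that $\pi$ is an even stratified morphism: in the non-equivariant setting \eqref{KMFV-const} this uses only ordinary cohomology of affine spaces, while in the equivariant setting \eqref{KMFV-equi} the torsion-prime hypothesis on $A_I$ is precisely what guarantees the analogous statement for $\PC_I$-equivariant cohomology.

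Since $\Bs$ is smooth, $\uk_{\Bs}[N]$ is parity, so by Proposition \ref{prop-evenpush} the complex $\pi_{*}\uk_{\Bs}[N]$ is parity on $\ov{{}^I X_w^J}$. By Krull--Schmidt and Theorem \ref{indecs} it decomposes as a direct sum of shifts of parity sheaves. Since $\pi$ restricts to an isomorphism over the open Bruhat cell $X_{w_I w}^J$ inside the open stratum ${}^I X_w^J$, the restriction of $\pi_{*}\uk_{\Bs}[N]$ to ${}^I X_w^J$ contains $\uk_{{}^I X_w^J}[N]$ as a direct summand, so exactly one indecomposable summand has ${}^I X_w^J$ open in its support and extends $\uk_{{}^I X_w^J}[N]$; this summand is the desired $\EC(w)$. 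The main obstacle is the evenness verification in the second paragraph: one must produce a compatible stratification of $\Bs$ and show that the stratum-to-stratum fibres are affine-paved, and in the equivariant setting one must further check that the torsion-prime hypothesis is enough to make their equivariant cohomology torsion-free and concentrated in even degrees.
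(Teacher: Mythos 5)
There is a genuine gap in your resolution construction. The variety you build, $\Bs \cong \PC_I \times^{\BC} \PC_{i_1} \times^{\BC} \cdots \times^{\BC} \PC_{i_n}/\BC$, has dimension $N = \ell(w_I) + \ell(w) = \ell(w_I w)$, but your claim that $N = \dim {}^I X_w^J$ is false whenever $w_I w \notin W^J$, which happens routinely when $J \ne \emptyset$. (The open $\BC$-cell of ${}^I X_w^J$ has dimension equal to the length of the \emph{minimal} coset representative of $w_I w W_J$, which is strictly less than $\ell(w_I w)$ precisely when $w_I w \notin W^J$.) In this case the map $\pi : \Bs \to \GC/\PC_J$ is not an isomorphism over any open subset of ${}^I X_w^J$ --- it has positive-dimensional fibres there --- so your final step fails: the restriction of $\pi_*\uk_{\Bs}[N]$ to ${}^I X_w^J$ need not contain $\uk[d_{\lambda}]$ as a summand, and $\EC(w)$ need not occur in $\pi_*\uk_{\Bs}[N]$ at all. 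A minimal example: $\GC = SL_2$, $I = J = \{1\}$, $w = e$; then $\Bs = \PM^1$, the target is the point $\GC/\PC_J$, and $\pi_*\uk_{\PM^1}[1] \cong \uk[1]\oplus\uk[-1]$, which does not contain $\EC(e) = \uk$ as a summand. This is why the paper instead uses \emph{generalised} Bott--Samelson varieties built from a chain $I = I_0 \subset J_0 \supset I_1 \subset \dots \supset I_n = J$ of finite-type parabolics, invoking \cite[Prop.~1.3.4]{W-thesis} to guarantee that the resulting map onto $\ov{{}^I X_w^J}$ \emph{is} an isomorphism over the open stratum; the intermediate parabolic quotients are exactly what is needed to cut the dimension down correctly.

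Separately, your verification of evenness via a Deodhar-type affine paving of the stratum-to-stratum fibres is more laborious than what the paper does and would need a careful citation or proof, especially in the equivariant case. The paper instead factors $f_*\uk_{\Bs}$ as a composition $\pi^{I_n*}_{J_{n-1}}\pi^{I_{n-1}}_{J_{n-1}*}\cdots\pi^{I_1*}_{J_0}\uk_{\PC_{J_0}}$ using a ladder of Cartesian squares and proper base change, reducing evenness to Proposition \ref{prop-quotparity} for the canonical quotient maps $\GC/\PC_J \to \GC/\PC_K$, whose stratum-to-stratum fibres are literally projections between affine cells by \cite[Prop.~7.1.5]{Ku}. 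That route avoids any analysis of Bott--Samelson fibres; if you do pursue the direct fibre analysis, you should first fix the resolution (use the generalised Bott--Samelson variety) and then it would be safest to run the same proper-base-change factorisation rather than verify pavings by hand.
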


\begin{remark}
  If $I = \emptyset$, then the $\PC_I$-orbits on $\GC/\PC_J$ are isomorphic to affine spaces. In this case one can use the results of Section \ref{subsec:fieldcontract} to deduce the existence of $\dagger$-parity sheaves $\EC^\dagger(w)$ for all $w \in {}^IW^J$ and any pariversity in the setting \eqref{KMFV-const}.
\end{remark}

Recall that, if we are in the situation \eqref{KMFV-equi} then,
given any three subsets $I, J, K \subset \{ 1, \dots, l \}$
of finite type there exists a bifunctor
\begin{align*} 
D_I(\GC /\PC_J) \times D_J(\GC / \PC_K) & \to D_I(\GC / \PC_K) \\
(\FC, \GC) & \mapsto \FC * \GC
\end{align*}
called convolution (see \cite{SpIH, MV}). It is defined 
using the convolution diagram (of topological spaces):
\[
\GC /\PC_J \times \GC / \PC_K
\stackrel{p}{\leftarrow} \GC \times \GC / \PC_K 
\stackrel{q}{\to} \GC \times_{\PC_J} \GC / \PC_K
\stackrel{m}{\to} \GC / \PC_K \]
where $p$ is the natural projection, $q$ is the quotient map and 
$m$ is the map induced by multiplication. One sets
\[ \FC * \GC := m_* \KC
\quad \text{where} \quad q^* \KC \cong p^* (\FC \boxtimes \GC). \]
For the existence of $\KC$ and how to make sense of
$\GC \times \GC / \PC_K$ algebraically, we refer the reader to
\cite[Sections 2.2 and 3.3]{Na}.

The second goal of this Section is to show:
\begin{thm}
\label{thm-flagconv}
Suppose that we are in situation \eqref{KMFV-equi}. Then
convolution preserves parity: 
if $\FC \in D_I(\GC /\PC_J)$ and $\GC \in D_J(\GC / \PC_K)$ are parity
complexes,
then so is $\FC * \GC\in D_I(\GC / \PC_K)$.
\end{thm}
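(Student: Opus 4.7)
The strategy is to trace $\FC*\GC$ through its defining convolution diagram and verify that parity is preserved at each stage, handling the final pushforward $m_*$ via Proposition \ref{prop-evenpush}. The two key claims are: (i) the kernel $\KC \in D_{\PC_I}(\GC \times_{\PC_J} \GC/\PC_K)$ is parity, and (ii) $m$ is proper and even.

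For (i), the external product $\FC \boxtimes \GC \in D_{\PC_I \times \PC_J}(\GC/\PC_J \times \GC/\PC_K)$ is parity by the K\"unneth formula: its $*$- and $!$-stalks at $(x,y)$ are graded tensor products of the corresponding stalks of $\FC$ and $\GC$, which by \eqref{assump-parity} and \eqref{assump-parity2} are $k$-free and concentrated in a single parity, so the parities add. Both $p$ and $q$ are $\PC_J$-torsors, and in the equivariant formalism of \cite{BLu} they induce equivalences of the relevant derived categories under which parity is preserved (pullback along a smooth morphism commutes with stalk and costalk functors up to a shift by an even integer). Since $q^*\KC \cong p^*(\FC \boxtimes \GC)$, this forces $\KC$ to be parity.

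For (ii), stratify the source by the $\PC_I$-orbits $S_{w,v} := \PC_I w \PC_J \times_{\PC_J} \PC_J v\PC_K/\PC_K$ for $(w,v) \in {}^IW^J \times {}^JW^K$, and the target by the $\PC_I$-orbits $\PC_I u\PC_K/\PC_K$ for $u \in {}^IW^K$. Since $\supp \KC$ lies in the closure of finitely many $S_{w,v}$, each of which maps into the finite-dimensional Schubert variety $\ov{\PC_I w \PC_J v\PC_K/\PC_K}$, the restriction of $m$ is proper there. For evenness, the fiber of $m|_{S_{w,v}}$ over a base point $x\PC_K \in \PC_I u\PC_K/\PC_K$ identifies with the subvariety of $\GC/\PC_J$ cut out by the two Bruhat conditions $g_1\PC_J \in \PC_I w\PC_J/\PC_J$ and $g_1^{-1}x\PC_K \in \PC_J v\PC_K/\PC_K$. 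An iterated application of the $(\BC,N)$-pair axioms exhibits such an intersection as paved by affine cells; by Proposition \ref{prop-flagstrata} (which also guarantees that every local system on $S_{w,v}$ is constant), its cohomology is $k$-free and concentrated in even degrees, which is precisely the evenness condition on $m$.

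Combining (i) and (ii), Proposition \ref{prop-evenpush} immediately gives that $m_*\KC = \FC*\GC$ is parity. The main obstacle is the fiber calculation in (ii): while affine paving of such double-Bruhat intersections is standard in the finite-dimensional case, verifying that the inductive argument extends cleanly to the Kac-Moody ind-variety setting and accommodates three distinct standard parabolics requires care. A possibly cleaner alternative is to first reduce to the case where $\FC$ and $\GC$ are pushforwards of constant sheaves under Bott--Samelson-like resolutions (which appear in the proof of Theorem \ref{thm-flagparity}), in which case $\FC*\GC$ is realized directly as the pushforward from a concatenated Bott--Samelson space and parity follows at once from Proposition \ref{prop-evenpush}; the general statement then follows because arbitrary parity complexes are direct summands of such pushforwards.
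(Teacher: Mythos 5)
Your primary argument attempts a direct verification that the multiplication map $m : \GC\times_{\PC_J}\GC/\PC_K \to \GC/\PC_K$ is proper and even and then invokes Proposition \ref{prop-evenpush}. Step~(i), that the kernel $\KC$ is parity, is fine (K\"unneth plus descent along the $\PC_J$-torsors $p$ and $q$). Step~(ii), however, has a gap that goes beyond the fibre-paving difficulty you flag. The paper's definition of an even morphism requires first of all that $m$ be \emph{stratified}: each source stratum must map into a single target stratum. With the stratification you propose by $S_{w,v} = \PC_Iw\PC_J \times_{\PC_J}\PC_Jv\PC_K/\PC_K$, this already fails, since $m(S_{w,v}) = \PC_Iw\PC_Jv\PC_K/\PC_K$ is typically a union of several $\PC_I$-orbits (already for $I=J=K=\emptyset$ and $w=v$ a simple reflection, where $BsBsB = B \cup BsB$). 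One would therefore have to refine the source stratification to $S_{w,v}\cap m^{-1}(\PC_Iu\PC_K/\PC_K)$, then check that these pieces are smooth and connected, that $\KC$ remains constructible with respect to the refinement, that $m$ restricted to each piece is a submersion, and only then address the affine paving of the fibres --- all in the ind-variety Kac--Moody setting. None of these steps is established, and the identification of the fibre with a double Bruhat intersection is the start, not the end, of that verification.

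The ``cleaner alternative'' you sketch at the end is precisely the paper's proof, and it is correct: by Theorem \ref{indecs} every indecomposable parity complex is a shift of a parity sheaf, each of which (per the proof of Theorem \ref{thm-flagparity}) is a direct summand of a Bott--Samelson pushforward $f_*\uk_{\Bs}$; since $*$ is additive in each argument and commutes with shifts, it suffices to show $f_{1*}\uk_{\Bs_1}*f_{2*}\uk_{\Bs_2}$ is parity, and this convolution is canonically isomorphic to $f_*\uk_{\Bs}$ for the concatenated Bott--Samelson variety, which is parity by the preceding proposition. This route sidesteps every one of the stratification issues above, so you should make it the argument rather than an afterthought.
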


\begin{remark}
\label{rem:bourbaki}
The case of finite flag varieties was considered in \cite{SpIH}. There
Springer gives a new proof, due to MacPherson and communicated to him
by Brylinski, of the fact that the characters of intersection
cohomology complexes on the flag variety give the Kazhdan-Lusztig
basis of the Hecke algebra. This uses parity considerations in an
essential way. See also \cite{Soe}.
\end{remark}

Before turning to the proofs we prove some properties about the
canonical quotient maps between Kac-Moody flag varieties and recall
the definition of (generalised) Bott-Samelson varieties. Unless we
state otherwise, in all statements below we assume that we are in either
situation \eqref{KMFV-const} or \eqref{KMFV-equi}.

If $J \subset K$ are subsets of $\{1, \dots, l \}$ the canonical
quotient map
\[
\pi^J_K : \GC/ \PC_{J} \to \GC/ \PC_{K}.
\]
is a morphism of ind-varieties.

\begin{prop}
\label{prop-quotparity}
If $K$ is of finite type then both 
 $(\pi^J_K)_*$ and $(\pi^J_k)^*$ preserve parity.
\end{prop}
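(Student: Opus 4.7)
The plan is to deduce both statements from basic geometric properties of the quotient map $\pi : \GC/\PC_J \to \GC/\PC_K$, together with Proposition~\ref{prop-evenpush}.

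First I would record two key features of $\pi$. Since $\pi$ is a Zariski-locally trivial fibration (being a quotient by the algebraic subgroup $\PC_K$ of the free $\PC_J$-action, locally on the base), it is smooth of relative dimension $d = \dim_\CM \PC_K/\PC_J$. Moreover, when $K$ is of finite type, the fibre $\PC_K/\PC_J$ is a partial flag variety of the reductive group with Cartan matrix $A_K$, hence smooth and projective; this ensures that $\pi$ is proper. Next I would verify that $\pi$ is stratified: for each $w' \in {}^IW^K$, the preimage $\pi^{-1}({}^IX^K_{w'}) = \PC_I w' \PC_K / \PC_J$ decomposes as the disjoint union of the strata ${}^IX^J_w$ with $w \in W_Iw'W_K \cap {}^IW^J$, and the induced map ${}^IX^J_w \to {}^IX^K_{w'}$ is smooth with fibre isomorphic to a Bruhat cell of $\PC_K/\PC_J$, hence an affine space.

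For $\pi_*$: by the previous paragraph each fibre $F_{\lambda,\mu}$ is an affine space, so any local system on it is trivial and $H^\bullet(F_{\lambda,\mu}, k)$ is free, concentrated in even degree. Hence $\pi$ is an even stratified morphism in the sense of Section~\ref{subsec-even}. Since $\pi$ is also proper, Proposition~\ref{prop-evenpush} yields that $\pi_*$ preserves the class of parity complexes.

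For $\pi^*$: smoothness gives $\pi^! \cong \pi^*[2d]$, an even shift. To check $*$- and $!$-parity of $\pi^*\FC$ on a stratum $X_\lambda = {}^IX^J_w$ mapping to $Y_\mu = {}^IX^K_{w'}$, I factor the inclusion as $X_\lambda \xrightarrow{j} \pi^{-1}(Y_\mu) \xrightarrow{k} \GC/\PC_J$ and apply smooth base change to the Cartesian square
\[
\xymatrix{
\pi^{-1}(Y_\mu) \ar[r]^-{k} \ar[d]_{\pi'} & \GC/\PC_J \ar[d]^{\pi} \\
Y_\mu \ar[r]^-{i_\mu} & \GC/\PC_K
}
\]
giving $k^*\pi^*\FC = \pi'^* i_\mu^*\FC$ and $k^!\pi^*\FC = \pi'^* i_\mu^! \FC$ (using $\pi^! = \pi^*[2d]$ and $\pi'^! = \pi'^*[2d]$). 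Since $i_\mu^*\FC$ and $i_\mu^!\FC$ are parity shifted local systems on $Y_\mu$, their smooth pullbacks along $\pi'$ are parity shifted local systems on $\pi^{-1}(Y_\mu)$; applying $j^*$ and $j^!$ (which differ by an even shift since $X_\lambda \subset \pi^{-1}(Y_\mu)$ is a smooth locally closed subvariety, the map on strata being smooth of even relative real dimension) gives the desired parity.

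The main obstacle is the combinatorial/geometric verification in the first paragraph, namely identifying the fibres of the restricted maps on strata with Bruhat cells in $\PC_K/\PC_J$; once this is in hand the rest is a formal consequence of Proposition~\ref{prop-evenpush} and smooth base change.
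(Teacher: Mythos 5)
Your proof is correct and follows essentially the same route as the paper's: verify that $\pi$ is a proper, even, stratified morphism (so $\pi_*$ preserves parity by Proposition~\ref{prop-evenpush}), and use smoothness together with $\pi^! \cong \pi^*[2d]$ to handle $\pi^*$. The only organisational difference is that the paper first observes one may reduce to the non-equivariant case and to $I = \emptyset$ (since the $\BC$-orbit stratification refines the $\PC_I$-orbit stratification), and then cites Kumar, Proposition~7.1.5, to get both the stratified property and the identification of the restricted maps as projections between affine spaces — exactly the combinatorial verification you flagged as the main obstacle.
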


\begin{proof} For the duration the proof we abbreviate $\pi := \pi^J_K$.
 Because a complex is parity if and only if it is parity
after applying the forgetful functor, it is clearly enough to deal with
  the non-equivariant case (i.e. that we are in situation
  \eqref{KMFV-const}).
Moreover, as the stratification of 
$\GC/ \PC_{K}$ by $\BC$-orbits refines the stratification by
$\PC_I$-orbits we may assume without loss of generality that 
$I = \emptyset$. It is known (see the discussion of \cite{Ku} around Proposition 7.1.5) that $\pi$ is a 
stratified proper morphism
between the stratified ind-varieties $\GC/ \PC_{J}$ and  
$\GC/ \PC_{K}$. Moreover, the same proposition shows that the
restriction of $\pi$ to a stratum in $\GC/ \PC_{K}$ is simply
a projection between affine spaces. If follows that $\pi$ is 
even and hence $\pi_*$ preserves parity complexes by 
Proposition \ref{prop-evenpush}.

We now prove that $\pi^*$ preserves parity complexes. So assume that $\FC$ is parity, or equivalently that $\FC$ and $\DM \FC$ are $*$-parity. Then it is enough to show that $\pi^* \FC$ and $\DM \pi^* \FC \cong \pi^! \DM \FC$ are $*$-parity. This is clear for $\pi^* \FC$. For $\pi^! \DM \FC$ note that our assumptions on $K$ guarantee that $\pi$ is a smooth morphism with fibres of some (complex) 
dimension $d$. Hence $\pi^! \cong \pi^*[2d]$ and so $\pi^! \DM \FC \cong \pi^* \DM \FC[2d]$ is also $*$-parity.
\end{proof}

Now, let $I_0 \subset J_1 \supset I_1 
\subset J_2 \supset \dots \subset  J_n \supset
I_n$ be finite type subsets of $\{1, \dots, l \}$.
For $1 \le i \le k \le n$ consider the spaces
\begin{gather*} \Bs(i, \dots, k) := 
\PC_{J_i} \times^{\PC_{I_i}} \PC_{J_{i+1}} \times^ {\PC_{I_{i+1}}} 
\dots \PC_{J_{k-2}} \times^{\PC_{I_{k-1}}} \PC_{J_{k}} /
\PC_{I_k}, \\
Y(i, \dots, k) := 
\GC \times^{\PC_{I_i}} \PC_{J_{i+1}} \times^ {\PC_{I_{i+1}}} 
\dots \PC_{J_{k-2}} \times^{\PC_{I_{k-1}}} \PC_{J_{k}} /
\PC_{I_k}
\end{gather*}
defined as the quotient of $\PC_{J_i} \times \PC_{J_{i+1}} \times \dots \times
\PC_{J_k}$ (resp. $\GC \times \PC_{J_{i+1}} \times \dots \times
\PC_{J_k}$) by $\PC_{I_i} \times \PC_{I_{i+1}} \times \dots \times \PC_{I_k}$
where $(q_i, \dots, q_{k})$ acts on $(p_i, \dots, p_k)$ by
\[
(p_iq_i^{-1}, q_ip_{i+1}q_{i+1}^{-1}, \dots, q_{k-1}p_k q_k^{-1}).
\]
Then $Y(i,\dots,k)$ is a projective ind-$\GC$-variety and
$BS(i,\dots,k)$ is a closed subvariety. The space $BS(i,\dots,k)$ 
is called a generalised Bott-Samelson variety. (When $I_i =
\emptyset$ and $|J_i|= 1$ for all $i$ then 
$BS(i,\dots, k)$ is
constructed in \cite[7.1.3]{Ku}. The construction of $BS(i,\dots,k)$
in general is discussed in \cite{GL}. The construction of
$Y(i,\dots,k)$ is similar).
We will denote points in these varieties
by $[p_i,\dots, p_k]$. For $i \le j \le k$ we have a morphism
of ind-varieties $f_j : Y(i,\dots, k) \to \GC / \PC_{I_j}: [p_i, \dots, p_k] \mapsto p_i\dots p_j\PC_{I_j}.$
The map
\begin{align*}
Y(i,\dots, k) & \to \GC / \PC_{I_i} \times 
\dots \times \GC / \PC_{I_k} \\
p & \mapsto (f_i(p), \dots, f_k(p)) 
\end{align*}
is a closed embedding with image
\begin{equation} \label{eq-altBSimage}
\{ 
(x_i, \dots, x_k) \in \GC / \PC_{I_i} \times
\dots \times \GC / \PC_{I_k}
\;|\;
\pi^{I_j}_{J_{j+1}}(x_{j}) = \pi^{I_{j+1}}_{J_{j+1}}(x_{j+1}),\;i \le j \le k-1. 
\}
\end{equation}
The image of $BS(i, \dots,k) \subset Y(i,\dots,k)$ is the sublocus of such $(x_i,\dots,x_k)$ with
$\pi^{I_i}_{J_{i}}(x_i) = \PC_{J_{i}}/\PC_{J_{i}} \in \GC/\PC_{J_{i}}$ (see
\cite[Section 7]{GL}).
It follows that we have a diagram in which all squares are Cartesian 
(note that $Y(i) = \GC/\PC_{I_i}$):
\[
\small{
\xymatrix@=0.35cm{
\Bs(1,\dots,n) \ar[r] \ar[d] &  Y(1,2, \dots, n) \ar[r] \ar[d] & Y(2, \dots, n) \ar[d]
\ar[r] & \dots \ar[r] &
Y({n-1}, n) \ar[r] \ar[d] & \GC / \PC_{I_{n}} \ar[d]^{\pi^{I_n}_{J_n}}
\\
\Bs(1,\dots,n-1) \ar[r] \ar[d]&  Y(1,\dots, {n-1}) \ar[d] \ar[r] & Y(2, \dots, {n-1}) \ar[d] \ar[r]
 & \dots \ar[r] & \GC / \PC_{I_{n-1}}
\ar[r]^{\pi^{I_{n-1}}_{J_n}} \ar[d] & \GC/ \PC_{J_{n}}  \\
\vdots \ar[d] & \vdots \ar[d] & \vdots \ar[d] & \ddots \ar[d] \ar[r] &  \GC/ \PC_{J_{n-1}} \\
\Bs(1,2) \ar[r] \ar[d]  & Y(1,2) \ar[r]\ar[d] & \GC / \PC_{I_2} \ar[r]\ar[d] & \GC / \PC_{J_3} \\
\Bs(1) \ar[r] \ar[d] & \GC / \PC_{I_1}\ar[r]^{\pi^{I_1}_{J_2}} \ar[d]^{\pi^{I_1}_{J_1}}
 & \GC / \PC_{J_2} \\
\PC_{J_1} / \PC_{J_1}  \ar[r] & \GC / \PC_{J_1}
}
} \]
Let $f : \Bs(1,\dots,n) \to \GC/\PC_{I_n}$ denote the restriction of
$f_n$ to $\Bs(1,\dots,n)$; it agrees with the map along the top of the
above diagram.

\begin{prop} \label{prop:BSparity}
The sheaf
$f_{*} \uk_{\Bs(1, \dots, n)} \in D_{I_0}(\GC/ \PC_{I_n})$ is parity. \end{prop}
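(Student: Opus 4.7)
The plan is to induct on $n$, using a Cartesian square extracted from the displayed diagram and applying Proposition~\ref{prop-quotparity} to the resulting pushforwards and pullbacks.

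For the inductive step, I would use the fact that the square
\[
\xymatrix{
\Bs(0,1,\ldots,n) \ar[r]^{g} \ar[d]_{f_n} & \Bs(0,1,\ldots,n-1) \ar[d]^{\tilde{f}_{n-1}} \\
\GC/\PC_{I_n} \ar[r]_{\pi^{I_n}_{J_{n-1}}} & \GC/\PC_{J_{n-1}}
}
\]
is Cartesian, where $g : [p_0,\ldots,p_{n-1}] \mapsto [p_0,\ldots,p_{n-2}]$ forgets the last factor and $\tilde{f}_{n-1} := \pi^{I_{n-1}}_{J_{n-1}} \circ f_{n-1}$. Since $\tilde{f}_{n-1}$ is proper, proper base change then yields
\[
(f_n)_* \uk_{\Bs(0,\ldots,n)} \cong (\pi^{I_n}_{J_{n-1}})^* (\pi^{I_{n-1}}_{J_{n-1}})_* (f_{n-1})_* \uk_{\Bs(0,\ldots,n-1)}.
\]
By induction $(f_{n-1})_* \uk_{\Bs(0,\ldots,n-1)}$ is parity in $D_{I_0}(\GC/\PC_{I_{n-1}})$, and since $J_{n-1}$ is of finite type, Proposition~\ref{prop-quotparity} ensures that both $(\pi^{I_{n-1}}_{J_{n-1}})_*$ and $(\pi^{I_n}_{J_{n-1}})^*$ preserve parity, completing the step. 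The base case $n=1$ is handled analogously: $\Bs(0,1) = \PC_{J_0}/\PC_{I_1}$ sits inside $\GC/\PC_{I_1}$ as the fibre of $\pi^{I_1}_{J_0}$ over the identity coset, so the same base change argument reduces the claim to showing that the skyscraper at $e\PC_{J_0} \in \GC/\PC_{J_0}$ is parity in $D_{I_0}$; since $\PC_{I_0} \subset \PC_{J_0}$, the stabilizer is $\PC_{I_0}$ itself, and $H^*_{\PC_{I_0}}(\pt,k)$ is torsion-free and concentrated in even degrees by Theorem~\ref{thm-BGtorsion} together with the standing assumption on the torsion primes of $A_{I_0}$.

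The main non-routine point will be verifying that the displayed square is genuinely Cartesian. This amounts to a direct calculation: any element of the fibre product is a pair $([p_0,\ldots,p_{n-2}], x\PC_{I_n})$ satisfying $p_0\cdots p_{n-2}\PC_{J_{n-1}} = x\PC_{J_{n-1}}$, and writing $x = p_0\cdots p_{n-2} p_{n-1}$ with $p_{n-1} \in \PC_{J_{n-1}}$ provides the canonical identification with $\Bs(0,\ldots,n)$. Alternatively, the square appears as one of the Cartesian squares already displayed in the big diagram above, whose Cartesianness follows from the embedding \eqref{eq-altBSimage} into a product of flag varieties.
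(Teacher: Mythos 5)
Your proof is correct and is essentially the same argument as the paper's: the paper applies proper base change repeatedly to the displayed diagram to express $f_*\uk_{\Bs(0,\ldots,n)}$ as an alternating composite of $\pi_*$'s and $\pi^*$'s applied to the skyscraper at $\PC_{J_0}$, then invokes Proposition~\ref{prop-quotparity}; you unwind exactly this as an induction on $n$, with the same Cartesian rectangle (yours is the outer rectangle of the top two rows of the paper's diagram, Cartesian as a composite of the small Cartesian squares, and you also verify it directly) and the same appeal to Proposition~\ref{prop-quotparity}. One tiny imprecision: the rectangle you use is not literally one of the drawn small squares but a horizontal composite of them — your direct computation handles this anyway, and your base case justification (parity of the skyscraper at $e\PC_{J_0}$ via even, torsion-free $H^\bullet_{\PC_{I_0}}(\pt,k)$) is precisely what Proposition~\ref{prop-flagstrata} already gives.
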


\begin{proof} (See \cite{Soe}.)
Repeated use of proper base change applied to the above diagram
gives an isomorphism
\[
f_{*} \uk_{\Bs(1, \dots, n)}
\cong (\pi^{I_n}_{J_{n}})^*
(\pi^{I_{n-1}}_{J_{n}})_*
\dots 
(\pi^{I_2}_{J_2})^* (\pi^{I_1}_{J_2})_*
(\pi^{I_1}_{J_1})^* \uk_{\PC_{J_1}}
\]
where $\uk_{\PC_{J_1}}$ denotes the skyscraper sheaf on the point
$\PC_{J_1}/\PC_{J_1} \in \GC / \PC_{J_1}$. However $\uk_{\PC_{J_1}}$ is
certainly parity and the result follows from Proposition
 \ref{prop-quotparity}.
\end{proof}

We can now prove Theorems \ref{thm-flagparity} and \ref{thm-flagconv}:

\begin{proof}
Fix subsets $I, J \subset \{ 1, \dots, l \}$ of finite type
and choose $w \in {}^I W^J$. By Theorem \ref{indecs} it is enough to
show that there exists at least one parity sheaf  $\EC$ such that 
the support of $\EC$ is $ \ov{ {}^IX_w^J} $.

One may show (see \cite[Proposition 1.3.4]{W-thesis}) that
there exists a sequence $I = I_0 \subset J_1 \supset I_1 
\subset J_2 \supset \dots J_n \supset I_n = J$ such that,
if $\Bs$ denotes the corresponding generalised Bott-Samelson
variety, the morphism
\[ f : \Bs \to \GC / \PC_J \]
has image $\ov{{}^IX_w^J}$ and is an isomorphism over ${}^I
X_w^J$.\footnote{
Actually the condition that $f : \Bs \to \GC / \PC_J$ be an
isomorphism over ${}^IX_w^J$ is not necessary for the proof. One only
needs that there exists a sequence $I = I_0 \subset J_1 \supset I_1 
\subset J_2 \supset \dots J_n \supset I_n = J$ such that the image
of the corresponding generalised Bott-Samelson variety in $\GC/\PC_J$
is equal to $\ov{{}^IX_w^J}$. In this case any indecomposable summand
of $f_* \uk_{\Bs}$ with support equal to $\ov{{}^IX_w^J}$ will give the
desired parity sheaf (up to a shift).
}
Let $d_{\Bs}$ denote the complex dimension of $\Bs$. Then $f_*
\uk_{\Bs}[d_{\Bs}]$ is self-dual (because $f$ is proper and $\Bs$ is smooth)
and parity (by Proposition \ref{prop:BSparity}). Hence if we let
$\EC$ denote the unique indecomposable direct summand of $f_*
\uk_{\Bs}[d_{\Bs}]$ which is non-zero over ${}^I X_w^J$ then $\EC$ is a
parity sheaf with support $\ov{ {}^IX_w^J}$. Theorem 
\ref{thm-flagparity} then follows in either situation
\eqref{KMFV-const} and \eqref{KMFV-equi}.

We now turn to Theorem \ref{thm-flagconv} and assume we are in the
situation \eqref{KMFV-equi}.
 By the 
uniqueness of parity sheaves, and the above remarks, it is
enough to show that if
\begin{gather*}  I = I_0 \subset J_1 \supset I_1 
\subset \dots \subset J_n \supset I_n = J \\
J = I_n \subset J_{n+1} \supset I_{n+1} \subset 
\dots \subset J_m \supset I_m = K
\end{gather*}
are two sequences of finite type subsets of $\{1, \dots, l \}$,
$\Bs_1$ and $\Bs_2$ are the corresponding generalised
Bott-Samelson varieties and $f_1 : \Bs_1 \to \GC/\PC_{J}$ and 
$f_2 : \Bs_2 \to \GC/\PC_{K}$ then
\[ f_{1*} \uk_{\Bs_1} * f_{2*} \uk_{\Bs_2} \in D_I(\GC/\PC_K) \]
is parity.

However, if $\Bs$ denotes the Bott-Samelson variety associated
to the concatenation $I = I_0 \subset J_1 \supset \dots \supset I_n
\subset \dots \subset J_m \supset I_m = K$ and $f : \Bs \to
\GC/\PC_K$ is the multiplication morphism then
\[ f_{1*} \uk_{\Bs_1} * f_{2*} \uk_{\Bs_2} 
\cong f_* \uk_{\Bs} \]
and the result follows from the proposition above.
\end{proof}

\begin{remark}~
\begin{enumerate}
\item Such theorems have been established for the finite flag
varieties if $k$ is a field of characteristic larger than the
Coxeter number by Soergel in \cite{Soe}. 
\item 
An important special case of the above is the affine Grassmannian. In
this case, parity sheaves are closely related to tilting modules
\cite{JMW3}.
\end{enumerate}
\end{remark}

\subsection{Toric varieties}
\label{subsec-toric}
In this section we prove the existence and uniqueness of $\natural$-parity sheaves on toric varieties. As in the previous section, here parity sheaf means $\natural$-parity sheaf.

For notation, terminology, and basic properties of toric varieties we
refer the reader to \cite{Ful} and \cite{CLS}. In this 
section $T$ denotes a connected algebraic torus and $M = X^*(T)$ 
and $N= X_*(T)$ denote the character and cocharacter lattices
respectively. If $L$ is a lattice 
we set $L_{\QM} := L \otimes_{\ZM} \QM$.

Recall that a fan in $N$ is a collection $\Delta$ of
polyhedral, convex cones in $N_{\QM}$ closed under
taking faces and intersections. To a fan $\Delta$ in $N$
one may associate a toric variety $X(\Delta)$ which is a connected
normal $T$-variety.  We write $X(\Delta,N)$ to specify the lattice if it is not clear from context.

There are finitely many orbits of $T$ on 
$X(\Delta)$ and the decomposition into orbits gives a stratification
\[ X(\Delta) = \bigsqcup_{\tau \in \Delta} O_{\tau} \]
indexed by the cones of $\Delta$. For example the zero cone $\{0\}$
always belongs to $\Delta$ and $O_{\{0\}}$ is an open dense orbit, canonically
identified with $T$.

In this section we fix a ring of coefficients $k$ as in Section
\ref{subsec-naa}, take
\begin{equation}
\text{$X = X(\Delta)$ as a $T$-variety}
\end{equation}
and let $D_{T}(X(\Delta)) = D(X)$ be as in Section
\ref{subsec-naa}. We use the notation of Section \ref{sec-dafp}
without further comment.

\begin{thm}
\label{thm-toricparity}
For each orbit $O_\tau$, there exists up to isomorphism one 
parity sheaf $\EC(\tau) \in D_{T}(X(\Delta))$
with support $V(\tau) = \ov O_{\tau}$.
\end{thm}

Let $\tau \in \Delta$ and let $N_{\tau}$ denote the
intersection of $N$ with the linear span of $\tau$. Then
$N_{\tau}$ determines a connected subtorus $T_{\tau} \subset T$. 

\begin{lem} 
The stabiliser of a point $x \in O_{\tau}$ is $T_{\tau}$ and 
is therefore connected.
\end{lem}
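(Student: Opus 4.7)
The plan is to identify $X_\tau$ explicitly as a torsor for a quotient torus of $T$, then compute the stabiliser directly. First I would recall the orbit-cone correspondence: for $\tau \in \Delta$, the orbit $X_\tau$ is canonically isomorphic to $\Hom_\ZM(\tau^\perp \cap M, \CM^*)$, where $\tau^\perp \subset M_\QM$ is the annihilator of the linear span of $\tau$ under the natural pairing $M \times N \to \ZM$. Under this identification there is a distinguished base point $x_\tau$ corresponding to the trivial character of $\tau^\perp \cap M$, and every $x \in X_\tau$ is of the form $t \cdot x_\tau$ for some $t \in T$.

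Next, I would describe the $T$-action explicitly: writing $T = \Hom_\ZM(M, \CM^*)$, the element $t \in T$ acts on a character $\chi : \tau^\perp \cap M \to \CM^*$ by multiplication by the restriction $t|_{\tau^\perp \cap M}$. Consequently, the stabiliser of $x_\tau$ is
\[
\Stab_T(x_\tau) = \{t \in T : t|_{\tau^\perp \cap M} = 1\}.
\]
The key step is then to identify this subgroup with $T_\tau$. By construction, $\tau^\perp \cap M$ is the annihilator in $M$ of $N(\tau)$, and since $N(\tau)$ is saturated in $N$ (being the intersection of $N$ with a linear subspace of $N_\QM$), the standard lattice-subtorus duality --- under which saturated sublattices $N' \subseteq N$ correspond to subtori $N' \otimes_\ZM \CM^* \subseteq T$ cut out by the vanishing of all characters in the annihilator of $N'$ --- gives the desired equality $\Stab_T(x_\tau) = T_\tau$.

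Finally, since $T$ is abelian, the stabiliser of any point of the orbit equals the stabiliser of $x_\tau$, hence equals $T_\tau$ at every point of $X_\tau$. Connectedness is immediate, since $T_\tau$ is by definition a subtorus of $T$ and every subtorus is connected. There is no real obstacle here: the argument is essentially bookkeeping with the orbit-cone correspondence and the standard duality between saturated sublattices of $N$ and subtori of $T$.
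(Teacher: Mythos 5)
Your argument is correct, and it is essentially the standard one; the paper itself offers no argument at all, merely citing ``the last exercise of Section~3.1 in \cite{Ful}.'' So you have supplied the proof that Fulton leaves to the reader and that the paper silently defers to. The computation is sound: the orbit--cone correspondence identifies $X_\tau$ with $\Hom_\ZM(\tau^\perp\cap M,\CM^*)$ with $T$ acting through restriction of characters, so the stabiliser of the base point is $\{t\in T : \chi(t)=1 \text{ for all } \chi\in \tau^\perp\cap M\}$; and because $N(\tau)=N\cap\langle\tau\rangle$ is a saturated sublattice, the subtorus $T_\tau = N(\tau)\otimes_\ZM\CM^*$ is exactly the common kernel of the characters in its annihilator $\tau^\perp\cap M$. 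You are also right to invoke commutativity of $T$ to pass from the base point to an arbitrary point of the orbit, and connectedness of a subtorus is immediate. One small point worth flagging explicitly in a final write-up: saturation of $N(\tau)$ is the precise hypothesis ensuring that $N(\tau)\otimes\CM^*\hookrightarrow T$ is a closed subtorus equal to the zero locus of its annihilator (rather than only a subgroup with that zero locus as its Zariski closure or saturation); you do note this, and it is exactly why the duality between sublattices and subtori applies cleanly here.
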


\begin{proof} This follows from the last exercise of Section 3.1 in 
\cite{Ful}. \end{proof}

We now turn to the proof of the theorem.

\begin{proof}
By the quotient equivalence, the categories of $T$-equivariant 
local systems on $O_{\tau}$ and $T_{\tau}$-equivariant local systems
on a point are equivalent. Hence any torsion free 
equivariant local system on $O_{\tau}$ is 
isomorphic to a direct sum of copies of the trivial local system $\uk_{\tau}$.
We have
\[
\Hom^{\bullet}(\uk_{\tau}, \uk_{\tau}) = 
H^{\bullet}_T(O_{\tau}) = H^{\bullet}_{T_{\tau}}(\pt) \]
which is torsion free and vanishes in odd degrees. It follows that
the $T$-variety $X(\Delta)$ satisfies
\eqref{assump-parity} and \eqref{assump-parity2}. By Theorem
\ref{indecs}, we conclude that 
for each $\tau \in \Delta$ there exists at most one
parity sheaf $\EC(\tau)$ supported on $V(\tau)$ and
satisfying $i_{\tau}^*\EC(\tau) \cong \uk_{\tau}[d_{\tau}]$.

It remains to show existence. Recall the following properties 
of toric varieties:
\begin{enumerate}
\item \label{it:toric1}
For $\tau \in \Delta$, $V(\tau)$ is a toric variety
for $T/T_{\tau}$ (\cite[Section 3.1]{Ful}).
\item \label{it:toric2}
For any fan $\Delta$ there exists a refinement $\Delta^{\prime}$
of $\Delta$ such that $X(\Delta^{\prime})$ is quasi-projective and 
the induced $T$-equivariant morphism
\[ \pi : X(\Delta^{\prime}) \to X(\Delta) \]
is a resolution of singularities (\cite[Section 2.6]{Ful}).
\item \label{it:toric3}
For all $\tau$ in $\Delta$ we have a Cartesian diagram (all
morphisms are $T$-equivariant):
\[
\xymatrix{ 
O_{\tau}\times Z \ar[r] \ar[d]^{\pi^{\prime}} \ar@/^.6cm/[rr]^{i_{\tau}^{\prime}}
& O_{\tau} \times X(\Sigma,N_\tau) \cong X(\Sigma,N)
\ar[r] \ar[d]  &  X(\Delta^{\prime}) \ar[d]^{\pi} \\
O_{\tau} \times \{\gamma_\tau\} \ar[r] \ar@/_.6cm/[rr]_{i_{\tau}}
& O_{\tau} \times U_{\tau,N_\tau} \cong U_{\tau,N} \ar[r] & X(\Delta) }
\]
Here the $U_{\tau,N}$ and $U_{\tau,N_\tau}$ denote the affine toric varieties for the cone $\tau$ in $N$ and $N_\tau$, while $\Sigma$ denotes the fan consisting of all cones in $\Delta^\prime$ contained in $\tau$. 
The square on the left is the product of $O_\tau$ with a fibre diagram.
\end{enumerate}

By \eqref{it:toric1} it suffices to show the existence of $\EC(\tau)$
when $\tau$ is the zero cone (corresponding to the open
$T$-orbit). For this it suffices to show that $\pi_*
\uk_{X(\Delta^{\prime})}$ is even. In fact, as
$\uk_{X(\Delta^{\prime})}[d_{\tau}]$ is self-dual and $\pi$ is proper,
we need only show that $\pi_*\uk_{X(\Delta^{\prime})}$ is $*$-even.

By proper base change we have $i_{\tau}^* \pi_*
\uk_{X(\Delta^{\prime})} 
\cong \pi_*^{\prime} \uk_{O_{\tau} \times Z}$. Under the quotient
equivalence $D_T(O_{\tau}) \stackrel{\sim}{\to} D_{T_{\tau}}(\pt)$,
the sheaf $\pi_*^{\prime} \uk_{O_{\tau} \times Z}$ corresponds to
$\tilde{\pi}_* \uk_Z \in D_{T_{\tau}}(\pt)$, where 
$\tilde{\pi}: Z \to \pt$ is the projection 
(of $T_{\tau}$-varieties). We will see in the proposition 
below that $\tilde{\pi}_* \uk_Z$ is 
always $*$-even. This proves the theorem.

\end{proof}

\begin{prop} Let $\tau \subset N_{\QM}$ be a full-dimensional polyhedral convex cone, $U_{\tau}$ the corresponding affine toric variety, and  
$\Delta^{\prime}$ a refinement of $\tau$ such that the corresponding
toric variety $X(\Delta^{\prime})$ is smooth and quasi-projective.
Let $x_{\tau}$ denote the unique
$T$-fixed point of $U_{\tau}$. Consider the Cartesian
diagram:
\[
\xymatrix{
Z = \pi^{-1}(x_{\tau}) \ar[d]^{\pi} \ar[r] & X(\Delta^{\prime}) \ar[d]^{\pi} \\
\{ x_{\tau} \} \ar[r] & U_{\tau} }
\]
Then $\pi_* \uk_{Z} \in D_T(\pt)$ is a direct sum of equivariant 
constant sheaves concentrated in even degree.
\end{prop}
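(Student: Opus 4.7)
The plan is to show the stronger statement that $H^{\bullet}_T(Z, k)$ is concentrated in even degrees and is a free $H^{\bullet}_T(\pt, k)$-module; this determines $\pi_* \uk_Z$ in $D_T(\pt)$ (which is equivalent to a suitable derived category of $H^{\bullet}_T(\pt, k)$-modules, the coefficient ring being a polynomial ring concentrated in even degrees) and forces it to be a direct sum of even shifts of $\uk_{\pt}$. By the standard equivariant formality argument, it suffices to show that $H^{\bullet}(Z, k)$ is concentrated in even degrees and $k$-free.

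To achieve this, I would construct an affine paving of $Z$ using the Bialynicki-Birula decomposition. Pick a one-parameter subgroup $\lambda : \Gm \to T$ lying in the relative interior of the top-dimensional cone $\tau$. Since $X(\Delta^{\prime})$ is smooth and quasi-projective and $\lambda$ has only isolated fixed points (the $T$-fixed points, indexed by the maximal cones of $\Delta^{\prime}$), BB yields a filtration of $X(\Delta^{\prime})$ by closed $T$-stable subvarieties with successive quotients $T$-stable affine cells $C_i = \{x : \lim_{s \to 0} \lambda(s) x = p_i\}$. Because $\tau$ is the unique maximal cone of $\Delta_{\tau}$, $x_{\tau}$ is the unique $T$-fixed point of $X(\Delta_{\tau})$, so every $p_i$ maps to $x_{\tau}$ and hence lies in $Z$. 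The crucial sub-claim is that each $Z_i := Z \cap C_i$ is an affine space. I would verify this using the combinatorial description of toric BB cells: $C_i$ is the union of those torus orbits $O_\rho$ of $X(\Delta^{\prime})$ for which $\rho$ is a face of the maximal cone $\sigma_i$ corresponding to $p_i$ lying in a certain half-space determined by $\lambda$; meanwhile $Z$ is the union of orbits $O_\rho$ for which $\rho$ meets the relative interior of $\tau$. One then identifies $Z_i$ with a coordinate subspace of the affine cell $C_i$.

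Granted the affine paving, I would apply the local-global spectral sequence for the induced stratification of $Z$. Its $E_1$-term is $\bigoplus_i H^{\bullet}_T(Z_i, k)$; since each $Z_i$ is a $T$-stable affine space with linear torus action, this is a direct sum of even shifts of $H^{\bullet}_T(\pt, k)$, so the spectral sequence degenerates for parity reasons and $H^{\bullet}_T(Z, k)$ has the required shape. The main technical obstacle is the combinatorial verification in the middle step---showing that the intersection of each BB cell with $Z$ is a coordinate affine subspace. This relies essentially on the genericity of $\lambda$ in the interior of $\tau$: this ensures both that the flow on the affine toric variety $X(\Delta_{\tau})$ contracts everything to $x_{\tau}$ (so that all fixed points are in $Z$ and the BB combinatorics is controlled by $\tau$) and that the ambient BB cells are themselves affine and compatible with the torus-orbit stratification.
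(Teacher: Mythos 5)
Your strategy is a natural reformulation of the paper's argument: the paper uses a shelling of $\Delta'$ coming from a strictly convex PL function $g$ with base point $x_0$, and (up to treatment of boundary facets of $\tau$) its cells $C(\sigma)=\bigsqcup_{\gamma(\sigma)\subset\omega\subset\sigma}X_\omega$ coincide with your Bialynicki--Birula cells for a one--parameter subgroup $\lambda=x_0$.  Your observations that properness of $\pi$ together with the fact that $\lambda\in\mathrm{int}(\tau)$ contracts $X(\Delta_\tau)$ to $x_\tau$ guarantees that all BB limits exist, and that one can then conclude via even free cohomology and equivariant formality, are all correct and match the paper.

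The gap is in the ``crucial sub-claim'' that each $Z_i=Z\cap C_i$ is a coordinate affine subspace of $C_i$: this is \emph{not} a consequence of genericity of $\lambda$ in $\mathrm{int}(\tau)$, so the technical obstacle you flag is more serious than you acknowledge.  Take $N=\ZM^3$, $\tau=\RM_{\geq 0}^3$, and the smooth projective refinement $\Delta'$ with rays $e_1,e_2,e_3$, $a=(1,1,0)$, $b=(0,1,1)$ and maximal cones $\sigma=\langle e_1,a,b\rangle$, $\sigma_2=\langle a,e_2,b\rangle$, $\sigma_3=\langle e_1,b,e_3\rangle$.  Here every ray lies in $\partial\tau$, the cones not contained in any wall are $\langle e_1,b\rangle$, $\langle a,b\rangle$ and the maximal cones, and $Z\cong\PM^1\vee\PM^1$.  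If $\lambda$ is chosen generically in $\mathrm{int}(\sigma)$, the BB cell of $x_\sigma$ is all of $U_\sigma\cong\AM^3$, and $Z\cap U_\sigma=\{z_{e_1}=z_b=0\}\cup\{z_a=z_b=0\}$ is a union of two coordinate lines through the origin, which is not an affine space (its $H^1_c$ is nonzero).  The same failure occurs for the paper's $\gamma(\sigma)=\langle b\rangle$.  A different choice such as $\lambda\in\mathrm{int}(\sigma_2)$ does yield an affine paving of $Z$ in this example, and the proposition's conclusion is still true (e.g.\ $H^\bullet(Z)=k,0,k^2$), but the argument requires justifying the existence and making a careful choice of $\lambda$ rather than appealing to genericity; the paper's appeal to a ``straightforward adaptation'' of Danilov glosses over this same point, so the remaining work is in the direction you identify but is a genuine obstacle, not a routine verification.
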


\begin{proof} It is enough to show that the 
$T$-equivariant cohomology of $Z$ with integral coefficients is free over
$H^{\bullet}_T(\pt, \ZM)$ and concentrated in even degrees.
We will show that the integral cohomology of $Z$
is free, and generated by the classes of $T$-stable closed subvarieties.
The result then follows by the Leray-Hirsch lemma 
(see \cite[proof of Theorem 4]{BriToric}).

We claim in fact that $Z$ has a $T$-stable affine paving, which 
implies the result by the long exact sequence of 
compactly supported cohomology.
The argument is a straightforward adaption of \cite[10.3 -- 10.7]{Dani}
(which the reader may wish to consult for further details).

As $X(\Delta^{\prime})$ is assumed to be quasi-projective we can
find a piecewise linear function $g : N_{\QM} \to \QM$ which is strictly
convex with respect to $\Delta^{\prime}$. In other words, $g$ is
continuous, convex and for each maximal cone $\sigma \in \Delta^{\prime}$, 
$g$ is given on $\sigma$ by $m_{\sigma} \in M$. The function $g$
allows us to order the maximal cones of $\sigma$ as follows: We fix a 
generic point $x_0 \in N_{\QM}$ lying in a cone of  
$\Delta^{\prime}$ and declare that $\sigma^{\prime} > \sigma$ if
$m_{\sigma^{\prime}}(x_0) > m_{\sigma}(x_0)$. If $\sigma^{\prime}$ and
$\sigma$ satisfy $\sigma^{\prime} > \sigma$ and 
intersect in codimension 1, then their intersection is said to
be a positive wall of $\sigma$. Given a maximal cone $\sigma$
we define $\gamma(\sigma)$ to be the intersection of $\sigma$ with
all its positive walls.

It is then easy to check (remembering that $X(\Delta^{\prime})$ is
assumed smooth) that if we set
\[
C(\sigma) = \bigsqcup_{\gamma(\sigma) \subset \omega \subset \sigma}
O_{\omega}
\]
then $C(\sigma)$ is a locally closed subset of $X(\Delta^{\prime})$ 
isomorphic to an affine space of dimension equal to the codimension
of $\gamma(\sigma)$ in $N_{\QM}$. Lastly note that
\[
Z = \bigsqcup O_{\sigma}
\]
where the union takes place over those cones in $\Delta^{\prime}$ which are not
contained in any wall of $\tau$. Hence the order on maximal cones
yields a filtration of $Z$ by 
$T$-stable closed subspaces $\dots \subset F_{\sigma_{i+1}}
\subset F_{\sigma_{i}} \subset \dots $ such that $F_{\sigma_{i+1}}
\setminus  F_{\sigma_{i}}$ is isomorphic to an affine space for all
$i$. The result then follows. \end{proof}

\begin{remark}
With notation as above, $X(\Delta^{\prime})$ retracts equivariantly
onto $Z$. With this in mind, the above arguments (together with the
reduction to the quasi-projective case in \cite{Dani}) can be used to
establish the equivariant formality (over $\ZM$) of convex smooth
toric varieties.  The elegant Mayer-Vietoris spectral sequence
argument of \cite{BrZh} may then be used to identify the equivariant
cohomology ring with piecewise integral polynomials on the fan. This
is probably well-known to experts.
\end{remark}

\subsection{Nilpotent cones}
\label{subsec-Nilp}

Let $\NC$ denote the nilpotent cone in the Lie
algebra $\mathfrak{g}$ of a connected reductive group $G$. 
The group $G$ acts on $\NC$
by the adjoint action and has finitely many orbits~\cite{Rich}.
In this section we discuss the existence and uniqueness of
$\natural$-parity sheaves on $\NC$ stratified by the $G$-orbits,
considered as a $G$-variety.
Recall that the nilpotent orbits are even dimensional (e.g., \cite[\S 1.4]{CM}), so
$\natural = \Diamond$. All parity sheaves will be with respect to this pariversity.
For $x\in\NC$, let $A_G(x) = G_x/G_x^0$ and $C_x =
(G_x^0)^\red$ the maximal reductive quotient of $G_x^0$.
We fix a ring of coefficients $k$ as in Section \ref{subsec-naa} and
assume that for all $x\in \NC$, the torsion primes of $C_x$ (see Section \ref{sec-torsion}) and
the order of the group $A_G(x)$ are invertible in $k$.

\subsubsection{Uniqueness}

\begin{lem} \label{lem-uniq}
The parity conditions \eqref{assump-parity} and \eqref{assump-parity2} are satisfied.
\end{lem}

\begin{proof}
For any orbit $\OC \subset \NC$ and $x \in \OC$, let $\tilde \OC =
G/G_x^0$ and $\pi: \tilde \OC \to \OC$ be the finite Galois cover
given by $gG_x^0 \mapsto g \cdot x$ with Galois group $A_G(x)$.  
Note that $\Loc_{,G}(\OC)$ is equivalent to the category of $k[A_G(x)]$-modules that
are free over $k$.  Using the assumption that $|A_G(x)|$ is invertible
in $k$, one can show that any $k[A_G(x)]$-module free over $k$ is
projective (and hence a direct summand of a direct sum of copies of
the regular representation).  The regular representation corresponds
to the pushforward $\pi_* \uk_{\tilde \OC}$. It thus suffices to show
that the equivariant cohomology groups of $\pi_* \uk_{\tilde \OC}$ are
free $k$-modules and vanish in odd degrees. We have
\[
H^\bullet_G(\OC, \pi_* \uk_{\tilde \OC})
= H^\bullet_G(\tilde \OC) 
= H^\bullet_{G_x^0}(\pt)
= H^\bullet_{C_x}(\pt).
\] 
Using the assumption that torsion primes for $C_x$ are invertible, we apply
Theorem~\ref{thm-BGtorsion} to conclude that the left hand side is a
free $k$-module and vanishes in odd degrees.
\end{proof}

By Theorem \ref{indecs} we conclude that for each pair $(\OC,\LC)$
consisting of a nilpotent orbit together with an irreducible
$G$-equivariant local system, there is at most one parity sheaf
$\EC(\ov\OC,\LC)$ with support $\ov\OC$ extending $\LC[d_\OC]$.

\begin{remark}
Our restriction on the ring of coefficients can be reformulated in
terms of the root datum $(\XB,\Phi,\YB,\Phi^\vee)$ of $G$.
In~\cite{Herpel}, Herpel defines a
notion of pretty good prime: a prime $p$ is pretty good for $G$
if the groups $\XB/\ZM\Phi_1$ and
$\YB/\ZM\Phi_1^\vee$ have no $p$-torsion for all subsets
$\Phi_1\subset \Phi$. One has a chain of implications: very good
$\Longrightarrow$ pretty good $\Longrightarrow$ good.  The class of
reductive groups for which $p$ is pretty good is characterised by the
following properties (this is a variant of \cite[Remark 5.4]{Herpel}):
\begin{enumerate}
\item it contains all simple groups for which $p$ is very good;
\item it contains $GL_n$ for all $n$;
\item it is closed under taking products, replacing $G$ by a
  $p$-separably isogenous group, and replacing $G = H\times S$ by $H$
  if $S$ is a torus.
\end{enumerate}
Using the tables of centralisers from~\cite{Carter} and the above
characterisation, one can show that a prime $p$ is pretty good for $G$
if and only if for all $x\in \NC$, $p$ is not a torsion prime for
$C_x$ and does not divide the order of $A_G(x)$.
\end{remark}

\subsubsection{Existence}

It is known \cite[V, Theorem 24.8]{CS} that the intersection
cohomology complexes of nilpotent orbit closures, with coefficients in
any irreducible $G$-equivariant local system in characteristic zero,
are even. Thus a similar result holds for almost all
characteristics (see Proposition \ref{prop:almostall}). However, work still
needs to be done to determine precise bounds on $p$ for parity sheaves
to exist, resp. to be perverse, resp. to be intersection cohomology
sheaves.  In what follows, we begin to address these questions.

Springer's resolution $\pi : \widetilde \NC := G
  \times^B \ug \to \ov\OC_\reg = \NC$ is
semi-small and even \cite{DLP} (here $B$ is a fixed Borel subgroup of
$G$ with unipotent radical $U$ and $\ug = \Lie U$). Thus
$\EC(\ov\OC_\reg)$ exists and is perverse.  By Remark
\ref{rem-existence}, we also have existence of $\EC(\ov\OC,\LC)$ for
all pairs appearing with non-zero multiplicity in the direct image
$\pi_* \uk_{\tilde\NC} [\dim \NC]$. By semi-smallness, all of
these are perverse. We remark that if $|W|$ is invertible in $k$, then
those pairs are ``the same as in characteristic zero''.

In the case $G=GL_n$, every orbit $\OC$ is equivariantly
simply-connected and there is a natural $G$-equivariant semi-small
resolution of singularities of $\ov\OC$ whose fibres admit affine
pavings \cite{Brundan-Ostrik}. It follows that there exists a perverse
parity sheaf $\EC(\ov\OC)$ with support $\ov \OC$ for any $k$ as above.

For a nilpotent orbit $\OC$ in an arbitrary connected reductive group,
let us recall how to construct a ``standard'' resolution of $\ov\OC$
\cite{Pa}. Let $x$ be an element of $\OC\cap\ug$. By the
Jacobson-Morozov theorem, there is an $\sg\lg_2$-triple $(x,h,y)$ in
$\gg$. The semi-simple element $h$ induces a grading on $\gg$, and 
we can choose the triple so that all the simple root vectors have degree
$0$, $1$ or $2$. Let $P$ be the standard parabolic subgroup of $G$
corresponding to the set of simple roots with degree zero.
Then there is a resolution of the form $\pi_\OC : \widetilde \NC_\OC \to \ov\OC$, where
$\widetilde \NC_\OC = G \times^P \gg_{\geq 2}$ is a
$G$-equivariant subbundle of $T^* (G/P) = G \times^P \ug_P$
(here $\ug_P$ is the Lie algebra of the unipotent radical $U_P$ of $P$),
and $\pi_\OC$ is the restriction of the moment map. To settle the
question of existence for $\EC(\ov\OC,k)$ in general, one is lead to
the following problem.

\begin{question}
Is the resolution $\pi_\OC : \widetilde \NC_\OC \to \ov\OC$
even for any coefficients?
\end{question}

Given any parabolic subgroup $P$ of $G$
with Lie algebra $\pg$, and any $P$-stable ideal $\ig \subset \pg$,
consider the natural morphism $\pi_{P,\ig} : G\times^P\ig \to \gg$. Fresse recently
proved that if $G$ is of classical type then $\pi_{P,\ig}$ is even
\cite{Fresse}. This answers our question positively in this case,
taking $P$ as above and $\ig = \gg_{\geq 2}$.  In particular, it
follows that there exists a parity extension for a constant local
system on any nilpotent orbit of a classical group. In this way one
actually constructs parity sheaves for a possibly larger set of local systems,
but probably not more than those arising in the characteristic zero
Springer correspondence \cite[Conjecture 6.3]{Sommers}.

\subsubsection{Minimal singularities}
\label{subsec:min}

Suppose that $G$ is simple. Then there is a unique minimal
(non-trivial) nilpotent orbit in $\gg$. We denote it by $\OC_\mini$.
It is of dimension $d := 2 h^\vee -2$, where $h^\vee$ is the dual
Coxeter number \cite{WANG}. We conclude by studying the singularity
  $\ov\OC_\mini=\OC_\mini \cup \{0\}$.  
In this section we construct an indecomposable $G$-equivariant parity extension
of the constant sheaf $\uk[d]$ on $\OC_\mini$.  

Consider the resolution of singularities
\[
\pi : E := G \times^P \CM x_\mini \longto \ov\OC_\mini = \OC_\mini \cup \{0\}
\]
where $x_\mini$ is a highest weight vector of the adjoint
representation and $P$ is the parabolic subgroup of $G$ stabilising
the line $\CM x_\mini$. It is an isomorphism over $\OC_\mini$, and the
fibre above $0$ is the null section, isomorphic to $G/P$, which has
even cohomology. Hence $\pi$ is an even resolution, and so $\pi_*
\uk_E [d]$ is even.

\begin{remark} \label{remark-fudge}
The construction above works for any $k$ (in fact for any commutative ring).
However, the uniqueness theorem \ref{indecs} does not apply unless we
restrict to a $k$ for which \eqref{assump-parity} and
\eqref{assump-parity2} hold.  Rather than restrict to such $k$, we
work here in the more general setting where parity
sheaves may not be defined uniquely and so we can only discuss indecomposable
parity complexes.  One reason for doing this is that the singularities $\ov\OC_\mini$ arise
in the affine Grassmannian where the parity conditions are satisfied
for a larger class of coefficients.
\end{remark}

We begin with a general lemma for isolated singularities.

\begin{lem}
\label{lem:two strata}
Suppose $X = U \sqcup \{0\}$ is a stratified variety (thus
$0$ is the only singular point).
We denote by $j : U \to X$ and $i : \{0\} \to X$ the inclusions.

\begin{enumerate}
\item
\label{it:std filtr}
Let $\PC$ be a $*$-even complex on X whose restriction to $U$ is perverse.
Then we have a short exact sequence
\[
0 \longto \p j_! j^* \PC \longto \p H^0 \PC \longto i_* \p i^* \PC
\longto 0.
\]

\item
\label{it:leq -2}
If $\FC$ is any perverse sheaf on $X$ whose composition factors are
one copy of $\ic(X,\FM)$ and $N$ copies of $\ic(0,\FM)$, then
$\HC^m(\FC)_0 \simeq \HC^m(\ic(X,\FM))_0$ for all $m \leq -2$.
\end{enumerate}
\end{lem}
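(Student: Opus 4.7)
The plan is to deduce both parts from the perverse-cohomology long exact sequence associated to the standard distinguished triangle
\[
j_! j^* \PC \longto \PC \longto i_* i^* \PC \triright,
\]
using the parity hypothesis to force the vanishing of suitable flanking terms.

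For part \eqref{it:std filtr}, taking $\p H^\bullet$ of this triangle, the five-term portion around degree zero reads
\[
\p H^{-1}(i_* i^* \PC) \to \p H^0(j_! j^* \PC) \to \p H^0 \PC \to \p H^0(i_* i^* \PC) \to \p H^1(j_! j^* \PC).
\]
Since $i$ is the inclusion of a closed point, $i_*$ is t-exact, so $\p H^k(i_* i^* \PC) = i_* H^k(i^* \PC)$. By the $*$-even hypothesis, $i^* \PC$ is concentrated in even degrees, so the leftmost term vanishes and the third nontrivial term equals $i_* H^0(i^* \PC)$, which is exactly the $i_*\, \p i_* \PC$ appearing in the statement. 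For the rightmost term, $j$ is open, so $j_!$ is right t-exact (as a left adjoint of the t-exact $j^* = j^!$); combined with the perverse bounds that $\PC$ satisfies in the intended application, this gives $\p H^1(j_! j^* \PC) = 0$, and the long exact sequence collapses to the claimed short exact sequence.

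For part \eqref{it:leq -2}, I would induct on the number $a$ of times the skyscraper $\ic(0, \FM) = i_* \FM$ occurs as a composition factor. The crucial observation is that $i^* \ic(0, \FM) = \FM$ is concentrated in degree zero. Hence in any short exact sequence of perverse sheaves $0 \to \FG \to \FC \to \GC \to 0$ in which either $\FG$ or $\GC$ is $\ic(0, \FM)$, the induced long exact sequence of stalks at $0$,
\[
\cdots \to \HC^{m-1}(\GC)_0 \to \HC^m(\FG)_0 \to \HC^m(\FC)_0 \to \HC^m(\GC)_0 \to \HC^{m+1}(\FG)_0 \to \cdots,
\]
has the skyscraper contributions vanish in every degree $m$ with $m \leq -2$, since then $m, m-1 \leq -1 \neq 0$. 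Thus $\HC^m(\FC)_0$ is isomorphic to the stalk of the non-skyscraper factor in that range. Iterating this along a Jordan--Hölder filtration that strips off the $a$ copies of $\ic(0, \FM)$ one at a time yields $\HC^m(\FC)_0 \simeq \HC^m(\ic(X, \FM))_0$ for all $m \leq -2$.

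The main obstacle is part \eqref{it:std filtr}: justifying the vanishing of $\p H^1(j_! j^* \PC)$ requires pinning down the perverse range in which $\PC$ lives, which is subtle for an arbitrary $*$-even complex, though straightforward in the intended geometric context (e.g.\ when $\PC$ is itself perverse or at least lies in $\p D^{\leq 0}$). Part \eqref{it:leq -2} is a clean Jordan--Hölder induction once the observation that $\ic(0,\FM)$ has stalk cohomology concentrated in degree zero is in hand.
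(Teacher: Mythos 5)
Your proof of both parts takes the same route as the paper's: for part \eqref{it:std filtr} you apply $\p H^\bullet$ to the standard triangle $j_!j^*\PC \to \PC \to i_*i^*\PC$ and use $*$-evenness to kill $\p H^{-1}(i_*i^*\PC) = i_*\HC^{-1}(i^*\PC)$; for part \eqref{it:leq -2} you induct on $a$ along a Jordan--H\"older filtration, using that $\ic(0,\FM) = i_*\FM$ has stalk at $0$ concentrated in degree $0$. Both match the paper's argument.

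You are right to flag the right-hand vanishing $\p H^1(j_!j^*\PC) = 0$ as the delicate point. The paper's proof writes that the long exact sequence ``ends with'' $\cdots \to i_*\p i^*\PC \to 0$ without comment, tacitly asserting this vanishing. For an arbitrary $*$-even complex this does not follow: $*$-even only controls the \emph{parity} of the degrees in which $j^*\PC$ sits, not their range, so $j_!j^*\PC$ can a priori have nonzero $\p H^1$. In the applications (Proposition \ref{prop:min} and Lemma \ref{lem-MinuHSR}) the complex $\PC$ is the direct image of a shifted constant sheaf under a proper semi-small even resolution of an (at most) two-stratum space, hence perverse; then $j^*\PC$ is perverse on $U$, so $j_!j^*\PC \in \p D^{\leq 0}$ and $\p H^1(j_!j^*\PC) = 0$ as needed. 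So the lemma as stated is read with this implicit perversity (or $\p D^{\leq 0}$) in force, as you observe; your appeal to ``the perverse bounds $\PC$ satisfies in the intended application'' is exactly the missing justification, and it would be cleaner to add that hypothesis explicitly. Part \eqref{it:leq -2} is fine as written.
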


\begin{proof}
We have a distinguished triangle
\[
j_! j^* \PC \longto \PC \longto i_* i^* \PC \longtriright
\]
which gives rise to a long exact sequence of perverse cohomology
sheaves, which ends with:
\[
i_* \p H^{-1} i^* \PC \longto \p j_! j^* \PC \longto \p H^0 \PC
\longto i_* \p i^* \PC \longto 0.
\]
Now, $\p H^{-1} i^* \PC$ is identified with $(\HC^{-1}\PC)_0$ which is zero
since $\PC$ is $*$-even. This proves \eqref{it:std filtr}.

For \eqref{it:leq -2}, we proceed by induction on $N$. The result is
trivial for $N = 0$. Now suppose $N > 1$. There is a perverse sheaf
$\GC$ such that we have a short exact sequence of one of the two
following forms:
\begin{gather}
0 \longto \GC \longto \FC \longto \ic(0,\FM) \longto 0
\\
0 \longto \ic(0,\FM) \longto \FC \longto \GC \longto 0
\end{gather}
and we can consider the corresponding long exact sequence for the
cohomology of the stalk at zero.
From $\HC^m(\ic(0,\FM))_0 = 0$ for $m \leq -1$,
we deduce in both cases that $\HC^m(\FC)_0$ is isomorphic to
$\HC^m(\GC)_0$ for $m \leq -2$ (at least). The result follows by induction.
\end{proof}

\begin{prop}
\label{prop:min}
The following conditions are equivalent:
\begin{enumerate}
\item \label{min:parity perverse}
there exists a perverse, parity extension of $\un\FM_{\OC_\mini}[d]$;

\item \label{min:standard *-even}
the standard sheaf $\p j_! (\un\FM_{\OC_\mini} [d])$ is $*$-even;

\item \label{min:standard tf}
the standard sheaf $\p j_! (\un\OM_{\OC_\mini} [d])$ has torsion free stalks;

\item \label{min:H tf}
for all $m < d$, the cohomology group $H^m(\OC_\mini,\ZM)$ has no $p$-torsion;

\item \label{min:p}
the characteristic of $\FM$ is not one of the primes corresponding to the type of $G$ in the following table:
\[
\begin{array}{c|c|c|c|c}
A_n & B_n, C_n, D_n, F_4 & G_2 & E_6, E_7 & E_8\\
\hline
- & 2 & 3 & 2,3 & 2,3,5
\end{array}
\]
\end{enumerate}

\end{prop}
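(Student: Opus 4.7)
The plan is to establish the cycle (1)$\Leftrightarrow$(2)$\Leftrightarrow$(3)$\Leftrightarrow$(4)$\Leftrightarrow$(5), exploiting the even resolution $\pi : E = G\times^P \CM x_\mini \to \ov\OC_\mini$ introduced above. Since $\pi_*\un\OM_E[d]$ is even, it decomposes as
\[
\pi_*\un\OM_E[d] \simeq \EC(\ov\OC_\mini,\OM) \oplus \bigoplus_k (i_*\OM)[-2k]^{a_k}
\]
for some nonnegative integers $a_k$, and analogously over $\FM$ by modular reduction.

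For (1)$\Leftrightarrow$(2), I would apply Lemma~\ref{lem:two strata}\,(\ref{it:std filtr}) to $\PC = \pi_*\un\FM_E[d]$. Since $\p H^0((i_*\FM)[-2k]) = 0$ for $k \neq 0$ and $\HC^0(i^*\PC) = H^d(G/P,\FM)$, the resulting short exact sequence reads
\[
0 \to \p j_!(\un\FM_{\OC_\mini}[d]) \to \p H^0\EC \oplus (i_*\FM)^{a_0} \to i_* H^d(G/P,\FM) \to 0.
\]
If (1) holds then $\p H^0\EC = \EC$, and the long exact sequence of stalks at $0$, combined with the strict perversity ($\HC^k(\EC)_0 = 0$ for $k \geq 0$) and evenness of $\EC$, forces $\p j_!(\un\FM[d])$ to be $*$-even. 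Conversely, if (2) holds, the canonical surjection $\p j_!(\un\FM[d]) \twoheadrightarrow \p j_{!*}(\un\FM[d])$ has kernel a skyscraper concentrated in cohomological degree $0$, and its long exact sequence of stalks transfers $*$-evenness to the IC sheaf $\p j_{!*}(\un\FM[d])$. Verdier self-duality of the IC then forces it to be $!$-even as well, so it is a perverse parity sheaf; by the uniqueness of parity sheaves (Theorem~\ref{indecs}) it equals $\EC$, whence $\EC$ is perverse.

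The equivalence (2)$\Leftrightarrow$(3) follows from a modular reduction argument in the style of Proposition~\ref{prop-reduction}: the short exact sequence above lifts to one over $\OM$ (and remains exact after $\FM\otimes^L_\OM{-}$ because $H^d(G/P,\OM)$ is $\OM$-free, $G/P$ admitting an affine paving), and in this setting torsion-freeness of the $\OM$-stalks of $\p j_!(\un\OM[d])$ at $0$ is equivalent to even concentration of the $\FM$-stalks. For (3)$\Leftrightarrow$(4), the cohomology of the stalk of $\p j_!(\un\OM[d])$ at $0$ is identified, via Verdier duality $D\,\p j_!(\un\OM[d]) \simeq \p j_*(\un\OM[d])$ and the standard formula $i^*\,\p j_*(\un\OM[d]) \simeq \tau^{\leq -1}(H^\bullet(L,\OM)[d])$ for $L$ the link of $0$ in $\ov\OC_\mini$, with the groups $H^m(\OC_\mini,\OM)$ for $m < d$, using that $\OC_\mini$ deformation retracts onto its link.

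Finally, for (4)$\Leftrightarrow$(5), I would use the Gysin sequence of the $\Gm$-bundle $\OC_\mini \to G/P$ associated to the line bundle $E \to G/P$,
\[
\cdots \to H^{m-2}(G/P,\ZM) \xrightarrow{\cup c_1} H^m(G/P,\ZM) \to H^m(\OC_\mini,\ZM) \to H^{m-1}(G/P,\ZM) \to \cdots,
\]
where $c_1 \in H^2(G/P,\ZM)$ is the first Chern class of the line bundle. Since $H^\bullet(G/P,\ZM)$ is torsion-free, the $p$-torsion of $H^m(\OC_\mini,\ZM)$ coincides with the $p$-torsion of the cokernel of $\cup c_1$ in degree $m$. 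The main obstacle is the case-by-case verification, using Schubert calculus on the maximal parabolic quotient $G/P$ attached to the highest root, that the primes appearing in this cokernel for degrees $m < d$ are precisely those listed in (5); this computation is essentially that of~\cite{WANG}.
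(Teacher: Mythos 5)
Your direction (1)\,$\Rightarrow$\,(2) is valid: feeding the short exact sequence of Lemma~\ref{lem:two strata}\,\eqref{it:std filtr} into the long exact sequence of stalks at $0$ does force $\p j_!(\un\FM_{\OC_\mini}[d])$ to be $*$-even once $\EC(\ov\OC_\mini,\FM)$ is perverse. The paper gets the same thing more directly from Lemma~\ref{lem:two strata}\,\eqref{it:leq -2}, noting that both $\EC$ and $\p j_!$ are perverse with constituents $\ic(\ov\OC_\mini,\FM)$ once and $\ic(0,\FM)$ some number of times, so their stalks at $0$ agree in degrees $\leq -2$; either route is fine. (Your parenthetical ``$\HC^k(\EC)_0=0$ for $k\geq 0$'' is not obviously justified for $k=0$, but it is also not needed for the argument.)

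Your direction (2)\,$\Rightarrow$\,(1) is broken. You claim that $*$-evenness passes from $\p j_!(\un\FM_{\OC_\mini}[d])$ to the IC sheaf along the canonical surjection $\p j_!\twoheadrightarrow\ic(\ov\OC_\mini,\FM)$, but the kernel $i_*V$ is a skyscraper in cohomological degree $0$, so the long exact sequence of stalks at $0$ around degree $-1$ reads
\[
0=\HC^{-1}\bigl(\p j_!(\un\FM_{\OC_\mini}[d])\bigr)_0\longto\HC^{-1}\bigl(\ic(\ov\OC_\mini,\FM)\bigr)_0\longto V\longto\HC^{0}\bigl(\p j_!(\un\FM_{\OC_\mini}[d])\bigr)_0=0,
\]
giving $\HC^{-1}(\ic)_0\cong V$. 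By Proposition~\ref{prop:min std = ic} one has $V=\FM\otimes_\ZM H$, with $H$ the fundamental group of the long-root subsystem, and this is nonzero for many primes \emph{not} excluded by~\eqref{min:p}: e.g.\ in type $A_2$ with $p=3$, conditions (2)--(5) hold but $V=\FM_3\neq 0$. In those cases the IC sheaf is \emph{not} parity, so $\ic\neq\EC$ and your appeal to the uniqueness of parity sheaves (Theorem~\ref{indecs}) does not apply. The paper's argument deliberately avoids the IC sheaf: applying Lemma~\ref{lem:two strata}\,\eqref{it:std filtr} to $\PC=\pi_*\uk_E[d]$ yields a short exact sequence $0\to\p j_!\to\p H^0\PC\to i_*\p i^*\PC\to 0$ whose outer terms are $*$-even, so the self-dual $\p H^0\PC$ is parity; since $\EC$ is the unique indecomposable parity complex with full support and restriction $\un\FM_{\OC_\mini}[d]$, it must appear as a direct summand of the perverse sheaf $\p H^0\PC$ and is therefore perverse. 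Your (2)\,$\Rightarrow$\,(1) should be replaced by an argument of this form.

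The chain (2)\,$\Leftrightarrow$\,(3)\,$\Leftrightarrow$\,(4)\,$\Leftrightarrow$\,(5) is sketched in essentially the same way the paper handles it: $H^{d-1}(\OC_\mini,\OM)=0$ identifies $\p j_!(\un\FM_{\OC_\mini}[d])$ with the modular reduction of $\p j_!(\un\OM_{\OC_\mini}[d])$, and then Proposition~\ref{prop-reduction} gives (2)\,$\Leftrightarrow$\,(3); the identification of the stalk with truncated cohomology of $\OC_\mini$ gives (3)\,$\Leftrightarrow$\,(4); and the Gysin sequence of the $\CM^\times$-bundle over $G/P$ gives (4)\,$\Leftrightarrow$\,(5). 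Note, though, that the paper delegates the case-by-case torsion computation to~\cite{cohmin} and~\cite{Ju-AffGr}, not to~\cite{WANG}, which only computes the dimension of $\OC_\mini$.
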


\begin{proof}
First suppose that there exists a parity complex $\EC$ extending $\un\FM_{\OC_\mini}$ that is also perverse. 
Then both $\EC$ and $\p j_! (\un\FM_{\OC_\mini} [d])$
are perverse sheaves whose composition factors are one copy of $\ic(\ov\OC_\mini,\FM)$ and some number of
copies of $\ic(0,\FM)$. 
By Lemma \ref{lem:two strata} \eqref{it:leq -2}, we have
\[
\HC^m(\p j_! (\un\FM_{\OC_\mini} [d]))_0 \simeq
\HC^m(\ic(\ov\OC_\mini,\FM))_0 \simeq
\HC^m(\EC)_0
\]
for $m \leq -2$.
Since $(\p j_! (\un\FM_{\OC_\mini} [d]))_0$ is concentrated in degrees
$\leq -2$, this proves that $\p j_! (\un\FM_{\OC_\mini} [d])$ is
$*$-even. Thus $\eqref{min:parity perverse} \Longrightarrow \eqref{min:standard *-even}$.

Now assume that $\p j_! (\un\FM_{\OC_\mini} [d])$ is $*$-even.
Consider the parity complex $\PC := \pi_* \un \FM_E[d]$ defined in the
discussion proceeding Remark~\ref{remark-fudge}.
By Lemma \ref{lem:two strata} \eqref{it:std filtr},
we have a short exact sequence
\[
0 \longto \p j_! (\un\FM_{\OC_\mini} [d]) \longto \p H^0 \PC
\longto i_* \p i^* \PC \longto 0.
\]
Since the extreme terms are $*$-even, we deduce that
$\p H^0 \PC$ is $*$-even as well. But $\p H^0 \PC$ is self-dual,
because $\PC$ is. Thus $\p H^0 \PC$ is parity.  The short exact
sequence also shows that it is an extension of $\un\FM_{\OC_\mini}$.
Thus $\eqref{min:standard *-even} \Longrightarrow \eqref{min:parity perverse}$. 

The equivalences $\eqref{min:standard tf} \Iff \eqref{min:H tf} \Iff
\eqref{min:p}$ are proved in \cite{cohmin,Ju-AffGr}.
Briefly, the stalk $\p \JC_!(\ov\OC_\mini,\ZM_p)_0$ is given by a
shift of $H^*(\OC_\mini,\ZM_p)$ truncated in degrees
$\leq d - 2$, and $H^{d - 1}(\OC_\mini,\ZM) = 0$, so
$\eqref{min:standard tf} \Iff \eqref{min:H tf}$.
Now, by a case-by-case calculation \cite{cohmin}, one finds that
$\eqref{min:H tf} \Iff \eqref{min:p}$.

The vanishing $H^{d - 1}(\OC_\mini,\OM) = 0$ implies that
$\p j_! (\un\FM_{\OC_\mini} [d]) = \FM \otimes^L_\OM \p j_! (\un\OM_{\OC_\mini} [d])$
by \cite{decperv}.
Thus $\eqref{min:standard *-even} \Iff \eqref{min:standard tf}$
by Proposition \ref{prop-reduction}.
\end{proof}

Finally, let us recall from \cite{decperv} when the standard sheaf is
equal to the intersection cohomology sheaf for a minimal singularity.

\begin{prop}
\label{prop:min std = ic}
Let $\Phi$ denote the root system of $G$, with some choice of positive
roots. Let $\Phi'$ denote the root subsystem of $\Phi$
generated by the long simple roots.
Let $H$ denote the fundamental group of $\Phi'$, that is,
the quotient of its weight lattice by its root lattice.
We have a short exact sequence
\[
0 \longto i_* (\FM \otimes_\ZM H) \longto \p j_! (\un\FM_{\OC_\mini} [d])
\longto \ic(\ov\OC_\mini,\FM)
\longto 0
\]
Thus
$\p j_! (\un\FM_{\OC_\mini} [d]) \simeq \ic(\ov\OC_\mini,\FM) $
when the characteristic of $\FM$ does not divide $|H|$.
\end{prop}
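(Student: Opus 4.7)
The stratification $\ov\OC_\mini = \OC_\mini \sqcup \{0\}$ has only two strata, so every perverse sheaf on $\ov\OC_\mini$ has composition factors among the simples $\ic(\ov\OC_\mini,\FM)$ and $i_*\FM$. The perverse sheaf $\p j_!(\un\FM_{\OC_\mini}[d])$ extends $\un\FM_{\OC_\mini}[d]$ and carries a canonical quotient map onto the intermediate extension $j_{!*} = \ic(\ov\OC_\mini,\FM)$; its kernel is therefore of the form $i_*V$ for a finite-dimensional $\FM$-vector space $V$. This gives the asserted exact sequence, and the whole task is to identify $V$ with $\FM \otimes_\ZM H$.

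My plan is to extract $V$ as a stalk invariant. Among perverse extensions of $\un\FM_{\OC_\mini}[d]$, the functor $\p j_!$ is characterised by the fact that the stalk $i^*\p j_!(\un\FM_{\OC_\mini}[d])$ may carry a non-zero $\HC^0$, whereas $\ic(\ov\OC_\mini,\FM)$ has stalk in strictly negative degrees; the long exact sequence of stalks attached to $0 \to i_*V \to \p j_!(\un\FM_{\OC_\mini}[d]) \to \ic(\ov\OC_\mini,\FM) \to 0$ therefore pins $V$ down in terms of the low-degree stalk and costalk groups of $Rj_* \un\FM[d]$. I would then invoke the distinguished triangle $j_!\un\FM[d] \to Rj_*\un\FM[d] \to i_*\, i^*Rj_*\un\FM[d]$ together with the contracting $\CM^\times$-action on $\ov\OC_\mini$, which exhibits a homotopy equivalence between $\OC_\mini$ and the link of $0$. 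This reduces the identification of $V$ to a computation of $H^{d-1}(\OC_\mini,\ZM)$ and its reduction mod $p$ via universal coefficients.

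The central calculation is thus the integral cohomology of $\OC_\mini$. Here one exploits the identification of $\OC_\mini$ with the complement of the zero section in the line bundle $E = G \times^P \CM x_\mini$ over the partial flag variety $G/P$, so that $\OC_\mini$ is a $\CM^\times$-bundle over $G/P$. The Gysin sequence of this bundle, combined with the Borel presentation of $H^\bullet(G/P,\ZM)$ by Schubert classes and an explicit formula for the Euler class as the first Chern class of the line bundle attached to a fundamental weight, isolates the torsion subgroup $H = P(\Phi')/Q(\Phi')$. The appearance of the long-root subsystem $\Phi'$ rather than the full $\Phi$ reflects the fact that $\theta$ is a long root, so the root datum governing the geometry of $G/P$ only sees $\Phi'$.

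The main obstacle is this last integral torsion calculation, which is carried out case-by-case in \cite{cohmin}. Once the identification $V \cong \FM \otimes_\ZM H$ is in hand, the second assertion of the proposition is immediate: when $p$ does not divide $|H|$ we have $\FM \otimes_\ZM H = 0$, so the short exact sequence collapses to the isomorphism $\p j_!(\un\FM_{\OC_\mini}[d]) \simeq \ic(\ov\OC_\mini,\FM)$.
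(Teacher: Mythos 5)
The paper does not give its own proof here: the proposition is quoted verbatim from \cite{decperv}, with the phrase ``let us recall.'' Your sketch plausibly reconstructs the argument underlying that citation and is right at the level of strategy --- the two-strata structure gives $0\to i_*V\to \p j_!(\un\FM_{\OC_\mini}[d])\to \ic(\ov\OC_\mini,\FM)\to 0$, the attaching triangle $j_!\un\FM[d]\to Rj_*\un\FM[d]\to i_*i^*Rj_*\un\FM[d]$ together with $\p H^{-1}j_* = 0$ and $\p H^0 j_! = \p j_!$ identifies $V\cong \HC^{-1}(i^*Rj_*\un\FM[d]) = H^{d-1}(\OC_\mini,\FM)$, and this is computed from the Gysin sequence of the $\CM^\times$-bundle $\OC_\mini\to G/P$ exactly as in \cite{cohmin}.

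A few details in the sketch are off and would need correcting in a careful write-up. First, the proposed ``characterisation'' of $\p j_!$ by a possibly non-zero $\HC^0$ of its stalk is actually false in this setting: for a two-strata variety with a point as the closed stratum, the triangle $\p\tau_{\leq -1}j_!\FC \to j_!\FC \to \p j_!\FC$, combined with $i^*j_!\FC = 0$, forces $i^*\p j_!\FC$ into (ordinary) degrees $\leq -2$; the copy of $V$ in degree $0$ only shows up in the long exact sequence of stalks after cancelling against $\HC^{-1}(i^*\ic)$. Second, the reduction to ``$H^{d-1}(\OC_\mini,\ZM)$ and its reduction mod $p$'' is misstated: as noted in the proof of Proposition~\ref{prop:min}, $H^{d-1}(\OC_\mini,\ZM) = 0$, so by universal coefficients $V\cong\mathrm{Tor}^{\ZM}_1(H^d(\OC_\mini,\ZM),\FM)$ --- the group whose torsion you must compute is $H^d$, not $H^{d-1}$. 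Third, the Euler class in the Gysin sequence is $c_1$ of the line bundle associated to the character $\theta$ (the highest root) of $P$, and $\theta$ is generally not a fundamental weight (for instance $\theta=\varpi_1+\varpi_n$ in type $A_n$). None of these issues invalidates your overall approach, but each would derail the argument if taken literally.
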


Thus $\ic(\ov\OC_\mini,\FM)$ is an indecomposable parity complex if the characteristic
of $\FM$ does not belong to the list in Proposition \ref{prop:min} and
does not divide $|H|$.

\def\cprime{$'$} \def\cprime{$'$}


\begin{thebibliography}{BBFK99}

\bibitem[AM12]{AM}
P.~Achar and C.~Mautner.
\newblock Sheaves on nilpotent cones, {F}ourier transform, and a geometric
  {R}ingel duality.
\newblock Preprint \href{http://arxiv.org/abs/1207.7044}{arXiv:1207.7044},
  2012.

\bibitem[BBD82]{BBD}
A.~Be{\u\i}linson, J.~Bernstein, and P.~Deligne.
\newblock Faisceaux pervers.
\newblock In {\em Analyse et topologie sur les espaces singuliers, I (Luminy,
  1981)}, volume 100 of {\em Ast\'erisque}, pages 5--171. Soc. Math. France,
  Paris, 1982.

\bibitem[BBFK99]{BBFK}
G.~Barthel, J.-P. Brasselet, K.-H. Fieseler, and L.~Kaup.
\newblock Equivariant intersection cohomology of toric varieties.
\newblock In {\em Algebraic geometry: {H}irzebruch 70 ({W}arsaw, 1998)}, volume
  241 of {\em Contemp. Math.}, pages 45--68. Amer. Math. Soc., Providence, RI,
  1999.

\bibitem[BBM04]{TiltExer}
A.~Beilinson, R.~Bezrukavnikov, and I.~Mirkovi{\'c}.
\newblock Tilting exercises.
\newblock {\em Mosc. Math. J.}, 4(3):547--557, 782, 2004.

\bibitem[BJ01]{BJ}
M.~Brion and R.~Joshua.
\newblock Vanishing of odd-dimensional intersection cohomology. {II}.
\newblock {\em Math. Ann.}, 321(2):399--437, 2001.

\bibitem[BL94a]{BL}
A.~Beauville and Y.~Laszlo.
\newblock Conformal blocks and generalized theta functions.
\newblock {\em Comm. Math. Phys.}, 164(2):385--419, 1994.

\bibitem[BL94b]{BLu}
J.~Bernstein and V.~Lunts.
\newblock {\em Equivariant sheaves and functors}, volume 1578 of {\em Lecture
  Notes in Mathematics}.
\newblock Springer-Verlag, Berlin, 1994.

\bibitem[BM83]{BM2}
W.~Borho and R.~MacPherson.
\newblock Partial resolutions of nilpotent varieties.
\newblock In {\em Analyse et topologie sur les espaces singuliers, {II}, {III}
  ({L}uminy, 1981)}, volume 101 of {\em Ast\'erisque}, pages 23--74. Soc. Math.
  France, Paris, 1983.

\bibitem[BM01]{BrM}
T.~Braden and R.~MacPherson.
\newblock From moment graphs to intersection cohomology.
\newblock {\em Math. Ann.}, 321(3):533--551, 2001.

\bibitem[BO11]{Brundan-Ostrik}
J.~Brundan and V.~Ostrik.
\newblock Cohomology of {S}paltenstein varieties.
\newblock {\em Transform. Groups}, 16(3):619--648, 2011.

\bibitem[Bor61]{Borel-torsion}
A.~Borel.
\newblock Sous-groupes commutatifs et torsion des groupes de {L}ie compacts
  connexes.
\newblock {\em T\^ohoku Math. J. (2)}, 13:216--240, 1961.

\bibitem[Bri]{BriToric}
M.~Brion.
\newblock The equivariant cohomology ring of toric varieties.
\newblock Notes from a summer school on the geometry of toric varieties,
  \url{http://www-fourier.ujf-grenoble.fr/~bonavero/articles/ecoledete/brion.ps}.

\bibitem[BZ03]{BrZh}
J.-L. Brylinski and B.~Zhang.
\newblock Equivariant {T}odd classes for toric varieties.
\newblock Preprint \href{http://arxiv.org/abs/math/0311318}{arXiv/0311318},
  2003.

\bibitem[Car85]{Carter}
R.~W. Carter.
\newblock {\em Finite groups of {L}ie type}.
\newblock Pure and Applied Mathematics (New York). John Wiley \& Sons Inc., New
  York, 1985.
\newblock Conjugacy classes and complex characters, A Wiley-Interscience
  Publication.

\bibitem[CG97]{CG}
N.~Chriss and V.~Ginzburg.
\newblock {\em Representation theory and complex geometry}.
\newblock Birkh\"auser Boston Inc., Boston, MA, 1997.

\bibitem[CLS11]{CLS}
D.~A. Cox, J.~B. Little, and H.~K. Schenck.
\newblock {\em Toric varieties}, volume 124 of {\em Graduate Studies in
  Mathematics}.
\newblock American Mathematical Society, Providence, RI, 2011.

\bibitem[CM93]{CM}
D.~H. Collingwood and W.~M. McGovern.
\newblock {\em Nilpotent orbits in semisimple {L}ie algebras}.
\newblock Van Nostrand Reinhold Mathematics Series. Van Nostrand Reinhold Co.,
  New York, 1993.

\bibitem[CPS93]{CPS}
E.~Cline, B.~Parshall, and L.~Scott.
\newblock Abstract {K}azhdan-{L}usztig theories.
\newblock {\em Tohoku Math. J. (2)}, 45(4):511--534, 1993.

\bibitem[Dan78]{Dani}
V.~I. Danilov.
\newblock The geometry of toric varieties.
\newblock {\em Uspekhi Mat. Nauk}, 33(2(200)):85--134, 247, 1978.

\bibitem[DCLP88]{DLP}
C.~De~Concini, G.~Lusztig, and C.~Procesi.
\newblock Homology of the zero-set of a nilpotent vector field on a flag
  manifold.
\newblock {\em J. Amer. Math. Soc.}, 1(1):15--34, 1988.

\bibitem[dCM02]{dCM}
M.~A.~A. de~Cataldo and L.~Migliorini.
\newblock The hard {L}efschetz theorem and the topology of semismall maps.
\newblock {\em Ann. Sci. \'Ecole Norm. Sup. (4)}, 35(5):759--772, 2002.

\bibitem[dCM05]{dCM2}
M.~A.~A. de~Cataldo and L.~Migliorini.
\newblock The {H}odge theory of algebraic maps.
\newblock {\em Ann. Sci. \'Ecole Norm. Sup. (4)}, 38(5):693--750, 2005.

\bibitem[Dem73]{DEM}
M.~Demazure.
\newblock Invariants sym\'etriques entiers des groupes de {W}eyl et torsion.
\newblock {\em Invent. Math.}, 21:287--301, 1973.

\bibitem[Fei82]{Feit}
W.~Feit.
\newblock {\em The representation theory of finite groups}, volume~25 of {\em
  North-Holland Mathematical Library}.
\newblock North-Holland Publishing Co., Amsterdam, 1982.

\bibitem[Fre13]{Fresse}
L.~Fresse.
\newblock Partial flag varieties and nilpotent elements.
\newblock Preprint \href{http://arxiv.org/abs/1305.3355}{arXiv:1305.3355},
  2013.

\bibitem[Ful93]{Ful}
W.~Fulton.
\newblock {\em Introduction to toric varieties}, volume 131 of {\em Annals of
  Mathematics Studies}.
\newblock Princeton University Press, Princeton, NJ, 1993.
\newblock The William H. Roever Lectures in Geometry.

\bibitem[FW]{FW}
P.~Fiebig and G.~Williamson.
\newblock Parity sheaves, moment graphs and the $p$-smooth locus of {S}chubert
  varieties.
\newblock To appear in Ann. Inst. Fourier.
  \href{http://arxiv.org/abs/1008.0719}{arXiv:1008.0719}.

\bibitem[GKM98]{GKM}
M.~Goresky, R.~Kottwitz, and R.~MacPherson.
\newblock Equivariant cohomology, {K}oszul duality, and the localization
  theorem.
\newblock {\em Invent. Math.}, 131(1):25--83, 1998.

\bibitem[GL05]{GL}
S.~Gaussent and P.~Littelmann.
\newblock L{S} galleries, the path model, and {MV} cycles.
\newblock {\em Duke Math. J.}, 127(1):35--88, 2005.

\bibitem[GM88]{GM-SMT}
M.~Goresky and R.~MacPherson.
\newblock {\em Stratified {M}orse theory}, volume~14 of {\em Ergebnisse der
  Mathematik und ihrer Grenzgebiete (3) [Results in Mathematics and Related
  Areas (3)]}.
\newblock Springer-Verlag, Berlin, 1988.

\bibitem[Gro99]{Groj}
I.~Grojnowski.
\newblock Affine $\mathfrak{sl}_p$ controls the representation theory of the
  symmetric groups and related {H}ecke algebras.
\newblock Preprint
  \href{http://arxiv.org/abs/math/9907129}{arXiv:math/9907129}, 1999.

\bibitem[Her13]{Herpel}
S.~Herpel.
\newblock On the smoothness of centralizers in reductive groups.
\newblock {\em Trans. Amer. Math. Soc.}, 365(7):3753--3774, 2013.

\bibitem[JMW]{JMW3}
D.~Juteau, C.~Mautner, and G.~Williamson.
\newblock Parity sheaves and tilting modules.
\newblock In preparation.

\bibitem[Jut08a]{cohmin}
D.~Juteau.
\newblock Cohomology of the minimal nilpotent orbit.
\newblock {\em Transform. Groups}, 13(2):355--387, 2008.

\bibitem[Jut08b]{Ju-AffGr}
D.~Juteau.
\newblock Modular representations of reductive groups and geometry of affine
  {G}rassmannians.
\newblock Available at \href{http://arxiv.org/abs/0804.2041}{arXiv:0804.2041},
  2008.

\bibitem[Jut09]{decperv}
D.~Juteau.
\newblock Decomposition numbers for perverse sheaves.
\newblock {\em Ann. Inst. Fourier (Grenoble)}, 59(3):1177--1229, 2009.

\bibitem[Kac85]{Kac-torsion}
V.~G. Kac.
\newblock Torsion in cohomology of compact {L}ie groups and {C}how rings of
  reductive algebraic groups.
\newblock {\em Invent. Math.}, 80(1):69--79, 1985.

\bibitem[KL80]{KL2}
D.~Kazhdan and G.~Lusztig.
\newblock Schubert varieties and {P}oincar\'e duality.
\newblock In {\em Geometry of the {L}aplace operator ({P}roc. {S}ympos. {P}ure
  {M}ath., {U}niv. {H}awaii, {H}onolulu, {H}awaii, 1979)}, Proc. Sympos. Pure
  Math., XXXVI, pages 185--203. Amer. Math. Soc., Providence, R.I., 1980.

\bibitem[KS94]{KS1}
M.~Kashiwara and P.~Schapira.
\newblock {\em Sheaves on manifolds}, volume 292 of {\em Grundlehren der
  Mathematischen Wissenschaften [Fundamental Principles of Mathematical
  Sciences]}.
\newblock Springer-Verlag, Berlin, 1994.
\newblock With a chapter in French by Christian Houzel, Corrected reprint of
  the 1990 original.

\bibitem[Kum02]{Ku}
S.~Kumar.
\newblock {\em Kac-{M}oody groups, their flag varieties and representation
  theory}, volume 204 of {\em Progress in Mathematics}.
\newblock Birkh\"auser Boston Inc., Boston, MA, 2002.

\bibitem[LC07]{LeChen}
J.~Le and X.-W. Chen.
\newblock Karoubianness of a triangulated category.
\newblock {\em J. Algebra}, 310(1):452--457, 2007.

\bibitem[Lus86]{CS}
G.~Lusztig.
\newblock Character sheaves.
\newblock {\em Adv. Math.}, 1985-1986.
\newblock I 56:193--237, 1985~; II 57:226--265, 1985~; III 57:266--315, 1985~;
  IV 59:1--63, 1986~; V 61:103--155, 1986.

\bibitem[Mak13]{Maksimau}
R.~Maksimau.
\newblock Canonical basis, {KLR}-algebras and parity sheaves.
\newblock Preprint \href{http://arxiv.org/abs/1301.6261}{arXiv:1301.6261},
  2013.

\bibitem[MV07]{MV}
I.~Mirkovi{\'c} and K.~Vilonen.
\newblock Geometric {L}anglands duality and representations of algebraic groups
  over commutative rings.
\newblock {\em Ann. of Math. (2)}, 166(1):95--143, 2007.

\bibitem[Nad05]{Na}
D.~Nadler.
\newblock Perverse sheaves on real loop {G}rassmannians.
\newblock {\em Invent. Math.}, 159(1):1--73, 2005.

\bibitem[Pan91]{Pa}
D.~I. Panyushev.
\newblock Rationality of singularities and the {G}orenstein property of
  nilpotent orbits.
\newblock {\em Funktsional. Anal. i Prilozhen.}, 25(3):76--78, 1991.

\bibitem[Ric67]{Rich}
R.~W. Richardson, Jr.
\newblock Conjugacy classes in {L}ie algebras and algebraic groups.
\newblock {\em Ann. of Math. (2)}, 86:1--15, 1967.

\bibitem[{Rin}92]{RingelTilting}
C.~M. {Ringel}.
\newblock {The category of modules with good filtrations over a
  quasi-hereditary algebra has almost split sequences.}
\newblock {\em {Math. Z.}}, 208(2):209--224, 1992.

\bibitem[RS65]{Rothenberg-Steenrod}
M.~Rothenberg and N.~E. Steenrod.
\newblock The cohomology of classifying spaces of {$H$}-spaces.
\newblock {\em Bull. Amer. Math. Soc.}, 71:872--875, 1965.

\bibitem[RSW]{RSW}
S.~Riche, W.~Soergel, and G.~Williamson.
\newblock Modular {K}oszul duality.
\newblock To appear in Compos. Math.
  \href{http://arxiv.org/abs/1209.3760}{arXiv:1209.3760}.

\bibitem[Ser67]{Serre}
J.-P. Serre.
\newblock {\em Repr\'esentations lin\'eaires des groupes finis}.
\newblock Hermann, Paris, 1967.

\bibitem[Soe00]{Soe}
W.~Soergel.
\newblock On the relation between intersection cohomology and representation
  theory in positive characteristic.
\newblock {\em J. Pure Appl. Algebra}, 152(1-3):311--335, 2000.

\bibitem[Soe01]{Soergel-Langlands}
W.~Soergel.
\newblock Langlands' philosophy and {K}oszul duality.
\newblock In {\em Algebra---representation theory ({C}onstanta, 2000)},
  volume~28 of {\em NATO Sci. Ser. II Math. Phys. Chem.}, pages 379--414.
  Kluwer Acad. Publ., Dordrecht, 2001.

\bibitem[Som06]{Sommers}
E.~N. Sommers.
\newblock Equivalence classes of ideals in the nilradical of a {B}orel
  subalgebra.
\newblock {\em Nagoya Math. J.}, 183:161--185, 2006.

\bibitem[Spr82]{SpIH}
T.~A. Springer.
\newblock Quelques applications de la cohomologie d'intersection.
\newblock In {\em Bourbaki {S}eminar, {V}ol. 1981/1982}, volume~92 of {\em
  Ast\'erisque}, pages 249--273. Soc. Math. France, Paris, 1982.

\bibitem[Ste75]{Steinberg-torsion}
R.~Steinberg.
\newblock Torsion in reductive groups.
\newblock {\em Advances in Math.}, 15:63--92, 1975.

\bibitem[VW]{VW}
K.~Vilonen and G.~Williamson.
\newblock Characteristic cycles and decomposition numbers.
\newblock To appear in Math. Res. Let.
  \href{http://arxiv.org/abs/1208.1198}{arXiv:1208.1198}.

\bibitem[Wan99]{WANG}
W.~Wang.
\newblock Dimension of a minimal nilpotent orbit.
\newblock {\em Proc. Amer. Math. Soc.}, 127(3):935--936, 1999.

\bibitem[WB12]{WillLowRank}
G.~{Williamson} and T.~{Braden}.
\newblock {Modular intersection cohomology complexes on flag varieties.}
\newblock {\em {Math. Z.}}, 272(3-4):697--727, 2012.

\bibitem[Wil08]{W-thesis}
G.~Williamson.
\newblock {\em Singular Soergel bimodules}.
\newblock PhD thesis, Albert-Ludwigs-Universit\"at Freiburg, 2008.
\newblock Available at \url{http://www.freidok.uni-freiburg.de/}.

\end{thebibliography}
\end{document}